\newcommand{\blind}{0}
\renewcommand{\theenumi}{\Alph{enumi}}
\renewcommand{\p@enumii}{\theenumi.}
\newtheorem{thm}{Theorem}
\newtheorem{cor}{Corollary}
\newtheorem{lem}{Lemma}
\newtheorem{prop}{Proposition}
\newtheorem{rmk}{Remark}
\DeclareMathOperator{\var}{Var}   \DeclareMathOperator{\cov}{Cov}
\DeclareMathOperator{\tr}{tr}         
\DeclareMathOperator{\im}{Im}      \DeclareMathOperator{\diag}{diag}
        \DeclareMathOperator{\E}{E}
\renewcommand{\(}{\left(}
\renewcommand{\)}{\right)}
\newcommand{\lb}{\left(}
\newcommand{\rb}{\right)}
\newcommand{\De}{\Delta}
\newcommand{\lam}{\lambda}
\renewcommand{\th}{\theta}
\newcommand{\si}{\sigma}
\newcommand{\ga}{\gamma}
\newcommand{\om}{\omega}
\newcommand{\vep}{\varepsilon}
\newcommand{\fSi}{{\pmb\Sigma}}
\newcommand{\fGa}{{\pmb\Gamma}}
\newcommand{\fLa}{{\pmb\Lambda}}
\newcommand{\fOm}{{\pmb\Omega}}
\newcommand{\fvep}{{\pmb\varepsilon}}
\newcommand{\fXi}{\pmb\Xi}
\newcommand{\fA}{{\bf A}}      \newcommand{\fa}{{\bf a}}
\newcommand{\fB}{{\bf B}}      \newcommand{\fb}{{\bf b}}
\newcommand{\fC}{{\bf C}}      \newcommand{\fc}{{\bf c}}
\newcommand{\fD}{{\bf D}}      
      \newcommand{\fe}{{\bf e}}
\newcommand{\fF}{{\bf F}}      \newcommand{\ff}{{\bf f}}
\newcommand{\fI}{{\bf I}}
\newcommand{\fK}{{\bf K}}      
\newcommand{\fM}{{\bf M}}
\newcommand{\fQ}{{\bf Q}}      \newcommand{\fq}{{\bf q}}
\newcommand{\fR}{{\bf R}}      \newcommand{\fr}{{\bf r}}
\newcommand{\fS}{{\bf S}}      
      \newcommand{\fu}{{\bf u}}
\newcommand{\fV}{{\bf V}}      \newcommand{\fv}{{\bf v}}
\newcommand{\fW}{{\bf W}}      
\newcommand{\fX}{{\bf X}}      \newcommand{\fx}{{\bf x}}
      \newcommand{\fy}{{\bf y}}
\newcommand{\fZ}{{\bf Z}}      \newcommand{\fz}{{\bf z}}
\newcommand{\cO}{{\mathcal O}}
\newcommand{\cF}{{\mathcal F}}
\newcommand{\cA}{{\mathcal A}}
\newcommand{\bC}{{\mathbb C}}
\newcommand{\bR}{{\mathbb R}}
\def\indic{{\mathbf{1}}}
\def\ul{\underline}
\def\wt{\widetilde}
\def\wh{\widehat}
\def\to{\rightarrow}
\def\q{\quad}
\def\pc{\,{\buildrel p \over \longrightarrow}\,}
\def\wk{\stackrel{\mathcal{D}}{\to}}
\def\iid{\sim_{{i.i.d.}}}
\def\mT{\mathrm{T}}
\begin{document}

\def\spacingset#1{\renewcommand{\baselinestretch}%
{#1}\small\normalsize} \spacingset{1}

%%%%%%%%%%%%%%%%%%%%%%%%%%%%%%%%%%%%%%%%%%%%%%%%%%%%%%%%%%%%%%%%%%%%%%%%%%%%%%

\if0\blind
{
  \title{\bf Tests for principal eigenvalues and eigenvectors}
  \author{Jianqing Fan \\ 
    Operations Research and Financial Engineering, Princeton University\\ \vspace{.2cm}
    Yingying Li
\hspace{.2cm}\\ \vspace{.2cm}
Department of Information System, Business Statistics and Operations Management, Hong Kong University of Science and Technology\\ \vspace{.2cm}
    Ningning Xia
\hspace{.2cm}\\ \vspace{.2cm}
School of Statistics and Management, Shanghai University of Finance and Economics
     \\ \vspace{.2cm}
    Xinghua Zheng  
\hspace{.2cm}\\ \vspace{.2cm}
    Department of Information System, Business Statistics and Operations Management, Hong Kong University of Science and Technology}
  \maketitle
} \fi

\if1\blind
{
  \bigskip
  \bigskip
  \bigskip
  \begin{center}
    {\LARGE\bf Tests for principal eigenvalues and eigenvectors}
\end{center}
  \medskip
} \fi

\bigskip
\begin{abstract}
We establish central limit theorems for  principal eigenvalues and eigenvectors under a large factor model setting,
and develop two-sample tests of both principal eigenvalues and principal eigenvectors.
One important application is to detect structural breaks in large factor models. Compared with existing methods for detecting structural breaks, our tests provide unique insights into the source of structural breaks because they can distinguish between individual principal eigenvalues and/or eigenvectors. We demonstrate the application by comparing the principal eigenvalues and principal eigenvectors of S\&P500 Index constituents' daily returns over different years.
\end{abstract}

\noindent%
{\it Keywords:}  Factor model; principal eigenvalues; principal eigenvectors; central limit theorem; two-sample test
\vfill

\newpage
\spacingset{1.9} % DON'T change the spacing!
\section{Introduction}
\label{sec:intro}

Factor models have been widely adopted in many disciplines,  most notably, economics and finance. Some of the most famous examples include the capital asset pricing model (CAPM, \cite{Sharpe64}), arbitrage pricing theory (\cite{Ross76}), approximate factor model (\cite{CR83}),  Fama-French three factor model (\cite{FF92}) and the more recent five-factor model (\cite{FF15}).

Statistically, the analysis of factor models is closely related to principal component analysis (PCA). For example, finding the number of factors is equivalent to determining the number of principal eigenvalues (\cite{BN02,Onatski10,AH13});  estimating factor loadings as well as factors relies on principal eigenvectors (\cite {SW98,SW02,Bai03,BN06,FLM11, FLM13,WF17}).

A factor model typically reads as follows:
\begin{equation}\label{model:factor}
y_{it}=\fb_i^\mT \ff_t +\vep_{it}, ~~~~ i=1,2,\ldots, N, ~ t=1,2,\ldots, T,
\end{equation}
where $y_{it}$ is the observation from the $i$th subject at time $t$,
$\ff_t$ is a set of factors, and $\vep_{it}$ is the idiosyncratic component. The number of factors, $r=\dim(f_t)$, is small compared with the dimension $N$, and  is assumed to be fixed throughout the paper.
The factor model \eqref{model:factor} can be put in a matrix form as
\[
\fy_t=\fB\ff_t +\fvep_t, t=1,2,\ldots, T,
\]
where $\fy_t=(y_{1t}, \ldots, y_{Nt})^\mT$, $\fB=(\fb_1,\ldots,\fb_N)^\mT$ and $\fvep_t=(\vep_{1t},\ldots,\vep_{Nt})^\mT$.
If follows that the covariance matrix  $\fSi$ of $\fy_t$ satisfies
\[
\fSi=\fB\cov(\ff_t)\fB^\mT+\fSi_{\vep},
\]
where $\fSi_\vep$ is the covariance matrix of $(\fvep_t)$.

The factors $(\ff_t)$ in some situations are taken to be observable. Examples include the market factor in CAPM and the  Fama-French three factors.   In some other situations, factors are latent and hence unobservable. In this paper, we focus on the latent factor case.

Factor models provide a parsimonious way to describe the dynamics of large dimensional variables. In the study of factor models, time invariance of factor loadings is a standard assumption. For example, in order to apply PCA, the loadings need to be time invariant or at least roughly so, otherwise the estimation will  be inconsistent.
However, parameter instability has been a pervasive phenomenon in time series data. Such instability could be due to policy regime switches, changes in  economic/finanncial   fundamentals, etc. Because of this reason, caution has to be exercised about  potential structural changes in  real data. Statistical analysis of  structural change in large factor model is challenging because the factors are unobserved and factor loadings have to be estimated.

There are some existing work on detecting structural breaks. Typically, the setup is as follows: suppose there are two time periods, one from time 1 to $T_1$, the second from $T_1 + 1$ to $T_1+ T_2$, where $T_1$ and $T_2$ do not necessarily equal. The first period has loading $\fB_1$, and the second period has loading $\fB_2$. One then tests whether $\fB_1$ equals~$\fB_2$. Specifically, one considers the following model:
\begin{eqnarray*}
\fy_t &=& \fB_1 \fF_t +\fvep_t, ~~~ t=1,2,\ldots, T_1, \\
\fy_t &=& \fB_2 \fF_t +\fvep_t, ~~~ t=T_1+1,\ldots, T_1 +T_2,
\end{eqnarray*}
and tests  the following hypothesis for detecting structural breaks
\[
H_0: ~ \fB_1=\fB_2 ~~~~ \textrm{vs.} ~~~~ H_a: ~ \fB_1\neq\fB_2.
\]
 Existing works include \cite{SW09, BE11,CDG14,HI15}, among others.

Let us connect the factor loadings with principal eigenvalues and eigenvectors.
Recall that $\fSi$ stands for the covariance matrix of $(\fy_t)$.  Write its spectral decomposition as
  \[
\fSi = \fV \fLa {\fV}^T,
\]	
where
\[
\fV=(\fv_1, \ldots , \fv_N), \mbox{ and }
\fLa=\diag(\lam_1, \ldots, \lam_N).
\]
The diagonal matrix $\fLa$ consists of eigenvalues in descending order, and $\fV$ consists of corresponding eigenvectors.
 Under  the convention that $\cov(\ff_t) = \fI$, the factor loading matrix
 \[
    \fB = \(\sqrt{\lam_1} \fv_1,\ldots,\sqrt{\lam_r} \fv_r\).
 \]
Therefore structural breaks can be due to changes in
 \begin{enumerate}[(i)]
 \item  one or more $\lam_i$,   or
 \item  one or more $\fv_i$,   or
 \item both.
\end{enumerate}
The economic and/or financial implications of these possibilities are, however, completely different. If a structural break is only due to change in eigenvalues, then in many applications, the structural break has no essential impact. For example, from dimension reduction point of view, if the principal eigenvalues change while the principal eigenvectors do not change, then projecting onto the principal eigenvectors is still valid. In contrast, if a structural break is caused by eigenvectors, then it may indicate a much more fundamental change,  possibly associated with important economic or market condition changes, to which one should be alerted.

Such observations bring up the aim of this paper: instead of testing whether the whole matrix $\fB$ is the same during two periods, we want to detect changes in \emph{individual}  principal eigenvalues and eigenvectors. By doing so, we can pinpoint the source of structural changes. Specifically, when a structural break occurs, we can determine whether it is caused by a change in a principal eigenvalue, a change in a principal eigenvector, or perhaps changes in both  principal eigenvalues and eigenvectors.

To be more specific, we consider the the following three tests. Let $\fSi^{(1)}$ and $\fSi^{(2)}$ be the population covariance matrices for the two periods under study. For any symmetric matrix $A$ and any integer $k$, we let $\lam_k(A)$ denote the $k$th largest eigenvalue of $A$,  $\fv_k(A)$  the corresponding eigenvector, and $\tr(\fA)$  its trace.
\begin{enumerate}[(i)]
\item Test equality of principal eigenvalues: for each $k=1,\ldots,r$, we test
\[
H_0^{(I,k)}: ~ \lam_k^{(1)}=\lam_k^{(2)}  ~~~~~  \textrm{vs} ~~~~
H_a^{(I,k)}: ~ \lam_k^{(1)}\neq \lam_k^{(2)},
\]
where $\lam_k^{(1)}:=\lam_k(\fSi^{(i)}),\ i=1,2$.
\item Considering that the total variation may vary, we  test about equality of the ratio of principal eigenvalues: for each $k=1,\ldots,r$,  test
\[
H_0^{(II,k)}: ~ \dfrac{\lambda_k^{(1)}}{\tr(\fSi^{(1)})}
= \dfrac{\lambda_k^{(2)}}{\tr(\fSi^{(2)})} ~~~~~  \textrm{vs} ~~~~
H_a^{(II,k)}: ~ \dfrac{\lambda_k^{(1)}}{\tr(\fSi^{(1)})}
\neq \dfrac{\lambda_k^{(2)}}{\tr(\fSi^{(2)})}.
\]
\item Most importantly, we  test equality of principal eigenvectors: for each $k=1,2,\ldots,r$,  test
\[
H_0^{(III,k)}: ~
|\langle \fv_k^{(1)}, \fv_k^{(2)}\rangle|=1,
 ~~~~~  \textrm{vs} ~~~~
H_a^{(III,k)}: ~
|\langle \fv_k^{(1)}, \fv_k^{(2)} \rangle| < 1,
\]
\end{enumerate}
where $\fv_k^{(i)}:=\fv_k(\fSi^{(i)}),\ i=1,2$, and $\langle \fa,\fb\rangle $ denotes the inner product of two vectors $\fa$ and $\fb$.

In this paper, we establish central limit theorems (CLT) for principal eigenvalues, eigenvalue ratios, as well as eigenvectors. We then develop two-sample tests based on these CLTs.

Due to the wide application of  PCA, a lot of work has been devoted to investigating principal eigenvalues.
However, the study of principal eigenvectors is very limited.  This paper represents a significant advancement in this direction.

We remark that there  is an independent work, \cite{BD2022}, that study similar questions. Nevertheless, there are several significant  differences  between \cite{BD2022} and our paper.
First, the non-principal eigenvalues are assumed to be equal  in \cite{BD2022}; see equation (1.2) therein. This is an unrealistic assumption in many applications.
In our paper, we allow the non-principal eigenvalues to follow an arbitrary distribution, rendering our results readily applicable in practice.
Second, in \cite{BD2022}, the dimension to the sample size ratio needs to be away from one; see Assumption 2.4 therein. We do not impose such a restriction in our paper. Third, \cite{BD2022} only consider the one-sample situation and study the projection of sample leading eigenvectors onto a given direction.
In our paper, we establish two-sample CLT, where  the projection of a principal eigenvector onto a random direction is considered.
Establishing such a result  presents a significant challenge. In summary, the setting of our paper is practically appropriate, and the results are of significant importance.

The organization of the paper is as follows.
Theoretical results are presented in Sections \ref{sec:setting}-\ref{sec:two_sample_test}. Simulation and Empirical studies are presented in Section \ref{sec:simulation} and \ref{sec:emp}, respectively. Proofs are collected in the Appendix.

\textit{Notation}: we use the following notation in addition to what have been introduced above. For a symmetric $N\times N$ matrix $\fA$, its {\it empirical spectral distribution} (ESD) is defined as
\[
F^{\fA} (x)=\dfrac 1N \ \sum_{j=1}^N \ \indic(\lam_j(\fA) \le x), ~~~~~ x\in\bR,
\]
where  $\indic(\cdot)$ is the indicator function.
The limit of ESD as $N\to\infty$, if it exists, is referred to as the {\it limiting spectral distribution}, or LSD for short.
For any vector $\fa$, let ${\fa}[k]$ be its $k$th entry.
We use  $`` \wk "$ to denote weak convergence.

%%%%%%%%%%%%%%%%%%%%%%%%%%%%%%%%%%%%%%%%%%%%%%%%%%%%%
\section{Setting and Assumptions}\label{sec:setting}

We assume that  $(\fy_t)_{t=1}^T$ is a sequence of \hbox{i.i.d.} $N-$dimensional random vectors with mean zero and covariance matrix $\fSi$.
Let $\lambda_1,\ldots,\lambda_N$ be the eigenvalues of $\fSi$ in descending order, and $\fv_1,\ldots,\fv_N$ be the corresponding eigenvectors.
Write $\fLa=\diag(\lam_1,\ldots, \lam_N)$ and $\fV=(\fv_1, \ldots, \fv_N)$. Then the spectral decomposition of $\fSi$ is given by $\fSi=\fV \fLa \fV^\mT$.

We make the following assumptions.

\smallskip
\noindent{\bf Assumption  A:}
\begin{compactenum}\setcounter{enumi}{1}
	\item[]
	The eigenvalues $\lambda_1>\lambda_2>\ldots>\lambda_r > \lambda_{r+1}\geq\ldots\geq \lambda_N$ satisfy that
	\begin{compactenum}
		\item\label{asm:princ} for the principal part, one has $\lim_{N\to\infty} \lam_k/N = \theta_k \in  (0,+\infty) $ for $1\le k \le r$, where $r>1$ is a fixed integer and $\theta_k$'s are distinct.
		\item\label{asm:bulk} for the non-principal part, there exists a $C_0<\infty$ such that $\lambda_j\le C_0$ for $j>r$, and the empirical  distribution of $\{\lam_{r+1}, \ldots, \lam_N\}$ tends to a distribution $H$.
	\end{compactenum}
\end{compactenum}

\begin{rmk}
Assumption (A.i) implies that the factors are strong. When the factors are weak, say $\lambda_i\asymp N^{\alpha}$ for some $\alpha\in (0,1)$, the convergence of sample principal components  still holds  with the convergence rate depending on  $\alpha$. In this paper, we only focus on the strong factor case and leave the study of weak factors for future work.
\end{rmk}

\noindent{\bf Assumption  B:}
The observations  $(\fy_i )_{i=1}^T$ can be written as $\fy_i=\fV \fLa^{1/2} \fz_i$, where\\
 $\{\fz_i = (\fz_i[1], \fz_i[2], \ldots, \fz_i[N] )^\mT, \ i=1, 2, \ldots, T \}$ are \hbox{i.i.d.} random vectors, and $\fz_i[\ell], \ell=1,\ldots,N, $ are independent random variables with zero mean, unit variance and satisfying $\sup_N \max_{1\le \ell \le N} \E (\fz_1[\ell])^4 < \infty $.

\begin{rmk}
Assumption B covers the multivariate normal case and coincides  with the idea of PCA.
Specifically, if $\fy_i$ follows a multivariate normal distribution, then
$\fz_i = \fLa^{-1/2} \fV^\mT \fy_i$ is an $N$-dimensional standard normal random vector and Assumption~B holds naturally.
On the other hand, under the orthogonal basis $\fV=(\fv_1, \ldots, \fv_N)$, the coordinates of $\fy_i$ are $(\sqrt{\lam_1} \fz_i[1], \ldots, \sqrt{\lam_N} \fz_i[N] )$. Assumption~B says that the coordinate variables are independent with mean zero and variance $\lam_i, i=1,\ldots,N$.
\end{rmk}

\noindent{\bf Assumption  C:}
The dimension $N$ and sample size $T$ are such that $\rho_N:=N/T \to \rho \in (0,+\infty)$ as $N \to\infty$.

%%%%%%%%%%%%%%%%%%%%%%%%%%%%%%%%%%%%%%%%%%%%%
\section{One-sample Asymptotics} \label{sec:one_sample_thms}
Let $\wh{\fSi}_N$  be the sample covariance matrix defined as
\[
\wh{\fSi}_N = \dfrac 1T \sum_{t=1}^\mT \fy_t \fy_t^\mT  .
\]
Denote its eigenvalues by
$\wh{\lam}_1\geq \cdots \geq \wh{\lam}_N$, and let   $\wh{\fv}_1,\ldots,\wh{\fv}_N$ be the corresponding eigenvectors.

\begin{thm}\label{thm:clt_spike}
Under Assumptions A--C, the  principal eigenvalues converge weakly to a multivariate normal distribution:
\begin{eqnarray}\label{eqn:limit_spike_evalue}
\sqrt{T}
\begin{pmatrix}
\wh{\lam}_1/\lam_1 - 1 \\
\vdots \\
\wh{\lam}_r/\lam_r - 1
\end{pmatrix}
~ \wk ~  N(0, \fSi_{\lam}),
\end{eqnarray}
where
$\fSi_{\lam} = \diag(\si_{\lam_1}^2, \ldots, \si_{\lam_r}^2 )$ is a diagonal matrix with
$\si_{\lam_k}^2=\E\(\fz_1[k]\)^4-1$.
\end{thm}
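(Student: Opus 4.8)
The plan is to exploit the fact that, under Assumption~A, the principal eigenvalues $\lam_1,\dots,\lam_r$ are of order $N\asymp T$ and are \emph{well-separated} both from each other and from the bulk of order $O(1)$. Consequently, a classical perturbation/fixed-point argument pins down each sample principal eigenvalue $\wh\lam_k$ in terms of a quadratic form in the $k$th coordinate of the noise vectors, and the CLT then follows from a CLT for such quadratic forms. First I would set up the eigenvalue equation: writing $\fz_t=(\fz_t[1],\dots,\fz_t[N])^\mT$ and decomposing each $\fz_t$ into its first $r$ coordinates and the remaining $N-r$ coordinates, express $\wh\fSi_N=\tfrac1T\sum_t \fV\fLa^{1/2}\fz_t\fz_t^\mT\fLa^{1/2}\fV^\mT$ in block form relative to the split $\bR^N=\mathrm{span}(\fv_1,\dots,\fv_r)\oplus\mathrm{span}(\fv_{r+1},\dots,\fv_N)$. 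For $\wh\lam_k$ in a neighbourhood of $\lam_k$ (which is $\gg C_0$ for large $N$, so it avoids the bulk spectrum), the secular equation reduces, via the Schur complement, to the statement that $\wh\lam_k$ is an eigenvalue of an $r\times r$ matrix $\fM(\wh\lam_k)$ whose entries are $\tfrac1T\sum_t\sqrt{\lam_a\lam_b}\,\fz_t[a]\fz_t[b]$ plus a correction term coming from the resolvent of the bulk block, which is $O(1/\lam_k)=O(1/N)$ and hence negligible after scaling by $\sqrt T$.

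The second step is to analyse the leading $r\times r$ matrix $\fG:=\big(\tfrac1T\sum_t \fz_t[a]\fz_t[b]\big)_{a,b=1}^r$. By the multivariate CLT applied to the i.i.d.\ terms $\fz_t[a]\fz_t[b]$ (finite fourth moments by Assumption~B), $\sqrt T(\fG-\fI_r)$ converges to a Gaussian matrix: the diagonal entries $\sqrt T(\tfrac1T\sum_t\fz_t[a]^2-1)$ have limiting variance $\E(\fz_1[a])^4-1$ and are \emph{asymptotically independent} across $a$ and independent of the off-diagonal entries (this uses the coordinate-independence in Assumption~B, so $\E\,\fz_t[a]^2\fz_t[b]^2=1$ for $a\neq b$ and all relevant cross-moments vanish). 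Now the eigenvalues of $\fLa_r^{1/2}\fG\,\fLa_r^{1/2}$ (with $\fLa_r=\diag(\lam_1,\dots,\lam_r)$), which approximate $\wh\lam_1,\dots,\wh\lam_r$, can be handled by standard eigenvalue perturbation: since the $\lam_k$ are distinct of order $N$, the gaps dominate the $O(\sqrt{N}\cdot N^{1/2}/T^{1/2})$-size off-diagonal fluctuations, so to first order $\wh\lam_k=\lam_k\,\fG_{kk}+(\text{cross terms that are }o_p(\lam_k/\sqrt T))$. The cross terms contribute $\sum_{j\neq k}\frac{\lam_k\lam_j\,\fG_{kj}^2}{\lam_k-\lam_j}$, which is $O_p(N/T)=O_p(1)$ after the $\sqrt T$ scaling times $1/\lam_k$ — wait, one must check this carefully: $\fG_{kj}=O_p(T^{-1/2})$, so $\frac{\lam_k\lam_j}{\lam_k-\lam_j}\fG_{kj}^2 = O_p(N/T)$, and dividing by $\lam_k\asymp N$ gives $O_p(1/T)$, which vanishes after multiplying by $\sqrt T$. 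Hence $\sqrt T(\wh\lam_k/\lam_k-1)=\sqrt T(\fG_{kk}-1)+o_p(1)$, and the conclusion — joint asymptotic normality with diagonal covariance $\diag(\E(\fz_1[k])^4-1)$ — follows from the CLT for $\fG$.

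I expect the main obstacle to be making the reduction to the $r\times r$ problem fully rigorous, i.e.\ controlling the Schur-complement correction term uniformly. One must show that $\tfrac1T\fZ_r^{(1)}\fLa_r^{1/2}\,\cdot\,\fLa_{\text{bulk}}^{1/2}\fZ_{\text{bulk}}\big(z\fI-\tfrac1T\fLa_{\text{bulk}}^{1/2}\fZ_{\text{bulk}}\fZ_{\text{bulk}}^\mT\fLa_{\text{bulk}}^{1/2}\big)^{-1}\fZ_{\text{bulk}}^\mT\fLa_{\text{bulk}}^{1/2}\,\cdot\,\fLa_r^{1/2}\fZ_r^{(1)\mT}$, evaluated at $z=\wh\lam_k\asymp N$, is $o_p(\lam_k/\sqrt T)=o_p(\sqrt N)$ in operator norm. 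This rests on a deterministic-equivalent / concentration statement for the bulk resolvent: since $\|\tfrac1T\fLa_{\text{bulk}}^{1/2}\fZ_{\text{bulk}}\fZ_{\text{bulk}}^\mT\fLa_{\text{bulk}}^{1/2}\|\le C_0(1+\sqrt\rho)^2+o_p(1)$ stays $O(1)$ while $z\asymp N$, the resolvent is $\approx -z^{-1}\fI$ plus lower-order terms, and the whole correction is $O_p(N\cdot N^{-1}\cdot\sqrt{N/T})$-type; pinning down the right power of $N$ here and showing it beats $\sqrt N$ is the delicate accounting. A secondary technical point is handling the implicit definition of $\wh\lam_k$ (it appears on both sides of the secular equation) — this is resolved by a standard monotonicity/continuity argument showing the fixed-point equation has a unique solution near $\lam_k$ with high probability, after which one substitutes $\wh\lam_k=\lam_k(1+o_p(1))$ into the correction terms to confirm they remain negligible.
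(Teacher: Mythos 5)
Your proposal is correct and follows essentially the same route as the paper: the Schur-complement reduction of the secular equation to an $r\times r$ matrix, a CLT for the quadratic forms $\tfrac1T\sum_t\fz_t[a]\fz_t[b]$, and a perturbation argument exploiting the separation of the $\theta_k$'s to show the off-diagonal and bulk-resolvent contributions are $O_p(1/T)$ relative to $\lam_k$ and hence vanish after the $\sqrt T$ scaling. The only cosmetic difference is that the paper keeps the bulk correction $\fA_N$ inside the quadratic form and invokes the Bai--Yao CLT for sesquilinear forms $\fz_{(i)}^\mT(\fI+\fA_N)\fz_{(j)}$ (after proving trace estimates showing $\fA_N$ is negligible), whereas you peel off the correction first and apply the plain multivariate CLT to $\fG$.
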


\begin{rmk}
The marginal convergence in \eqref{eqn:limit_spike_evalue} has been established in \cite{WF17} under a sub-Gaussian assumption. We generalize their result to joint convergence and under a weaker moment assumption. {See also \cite{CHP20} for a related result under a different setting.}
\end{rmk}
\begin{rmk}
By Theorem \ref{thm:clt_ev} below, the variance $\si_{\lambda_k}^2$ can be consistently estimated by
\[
\wh{\si_{\lam_k}^2} = \dfrac{1}{T (\wh{\lam}_k)^2  } \sum_{t=1}^{T} (\wh{\fv}_k^\mT \fy_t)^4 -1,
\]
hence a feasible CLT is readily available.
\end{rmk}

\begin{thm}\label{thm:clt_ratio}
Under Assumptions  A--C, for each $1\le k\le r$, we have
\begin{eqnarray} \label{eq3.2}
\sqrt{\dfrac{T}{ \wh{\si_{-k}^2} } }
\(
\dfrac{\wh{\lam}_k}{\tr(\wh{\fSi}_N)-\wh{\lam}_k}
- \dfrac{\lambda_k}{\tr(\fSi)-\lambda_k} \)
\ \wk \ N(0,1),
\end{eqnarray}
where
\[
\wh{\si_{-k}^2} = \dfrac{\wh{\lam}_k^2 }{\( \tr(\wh{ \fSi}_N)-\wh{\lam}_k \)^2 } \left[
\wh{\si_{\lam_k}^2}
+ \dfrac{\sum_{j\neq k,j=1}^r \wh{\lam}_j^2 \ \wh{\si_{\lam_j}^2}  }{\( \tr(\wh{\fSi}_N)- \wh{\lambda}_k \)^2 }
 \right] .
\]
\end{thm}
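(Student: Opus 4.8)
The plan is to derive Theorem~\ref{thm:clt_ratio} from Theorem~\ref{thm:clt_spike} by the delta method, treating the ratio $\wh{\lam}_k/(\tr(\wh{\fSi}_N)-\wh{\lam}_k)$ as a smooth function of the vector of principal sample eigenvalues together with the ``bulk trace'' $\tr(\wh{\fSi}_N)-\sum_{j=1}^r\wh{\lam}_j$. First I would record the joint fluctuations of all the ingredients. Theorem~\ref{thm:clt_spike} gives $\sqrt{T}(\wh{\lam}_j/\lam_j-1)\wk N(0,\si_{\lam_j}^2)$ jointly over $j=1,\dots,r$ with diagonal covariance. Separately, one shows that $\tr(\wh{\fSi}_N)$ concentrates: since $\tr(\wh{\fSi}_N)=\frac1T\sum_t\|\fy_t\|^2$ is an average of i.i.d.\ scalars with mean $\tr(\fSi)$, and $\tr(\fSi)\asymp N\to\infty$ while the principal eigenvalues also scale like $N$, the relative fluctuation of $\tr(\wh{\fSi}_N)-\wh{\lam}_k$ around $\tr(\fSi)-\lam_k$ is of smaller order than $1/\sqrt T$ \emph{after dividing by} $\lam_k$ — more precisely, $\sqrt{T}\big(\tr(\wh{\fSi}_N)-\tr(\fSi)\big)/\lam_k$ has a variance controlled by $\var(\|\fy_1\|^2)/\lam_k^2$, and one must check this is negligible compared to the $O(1)$ eigenvalue fluctuations; the dominant contribution to the ratio's fluctuation therefore comes entirely from $\wh{\lam}_1,\dots,\wh{\lam}_r$.

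Next I would linearize. Write $g(x_1,\dots,x_r,s)=x_k/(s-x_k)$ where $s$ plays the role of $\tr(\wh\fSi_N)$ and note that, modulo the negligible contribution of the bulk, we may freeze $s-\sum_{j=1}^r x_j$ at its deterministic limit and treat $s=\sum_{j=1}^r\wh\lam_j + (\text{bulk})$. Expanding to first order around $(\lam_1,\dots,\lam_r,\tr(\fSi))$, the leading term is
\[
\frac{\wh\lam_k}{\tr(\wh\fSi_N)-\wh\lam_k}-\frac{\lam_k}{\tr(\fSi)-\lam_k}
=\frac{\lam_k}{\tr(\fSi)-\lam_k}\Big[(\wh\lam_k/\lam_k-1)\cdot\frac{\tr(\fSi)}{\tr(\fSi)-\lam_k}-\sum_{j\ne k}\frac{\lam_j}{\tr(\fSi)-\lam_k}(\wh\lam_j/\lam_j-1)\Big]+o_P(T^{-1/2}).
\]
Because the $(\wh\lam_j/\lam_j-1)$ are asymptotically independent, the variance of the bracket is $\big(\tr(\fSi)/(\tr(\fSi)-\lam_k)\big)^2\si_{\lam_k}^2+\sum_{j\ne k}\big(\lam_j/(\tr(\fSi)-\lam_k)\big)^2\si_{\lam_j}^2$, divided by $T$; multiplying by $\lam_k^2/(\tr(\fSi)-\lam_k)^2$ out front and noting $\tr(\fSi)/(\tr(\fSi)-\lam_k)=1+\lam_k/(\tr(\fSi)-\lam_k)$, a short rearrangement matches the population version of $\wh{\si_{-k}^2}$. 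The final step is to replace every population quantity in the normalizing variance by its sample counterpart — $\lam_j\to\wh\lam_j$, $\tr(\fSi)\to\tr(\wh\fSi_N)$, $\si_{\lam_j}^2\to\wh{\si_{\lam_j}^2}$ — invoking the consistency of $\wh{\si_{\lam_j}^2}$ asserted in the remark after Theorem~\ref{thm:clt_spike} (and provable via Theorem~\ref{thm:clt_ev}), together with Slutsky's theorem, to obtain the self-normalized statement with $N(0,1)$ limit.

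The main obstacle I anticipate is controlling the bulk trace contribution with the correct normalization. Unlike the naive heuristic, $\tr(\wh\fSi_N)-\wh\lam_k$ involves the sum of \emph{all} $N$ sample eigenvalues, and the non-principal ones — while individually $O(1)$ — collectively sum to something of order $N$, with a genuine $O(\sqrt N/\sqrt T)=O(1)$ fluctuation from the idiosyncratic part. The point is that this fluctuation enters the ratio only through $\lam_k\cdot(\text{bulk fluctuation})/(\tr(\fSi)-\lam_k)^2$, which is $O(1)\cdot O(1/N)=o(1)$ relative to the retained terms, but making this rigorous requires a clean bound on $\var(\tr(\wh\fSi_N))=\frac1T\var(\|\fy_1\|^2)$; under Assumption~B with uniformly bounded fourth moments one gets $\var(\|\fy_1\|^2)\le C\sum_j\lam_j^2\asymp N^2$, so $\sqrt{T}(\tr(\wh\fSi_N)-\tr(\fSi))=O_P(N)$, and after the $\lam_k/(\tr(\fSi)-\lam_k)^2\asymp 1/N$ prefactor this is $O_P(1)$ — exactly the \emph{same} order as the retained terms, not smaller. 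Hence the bulk trace fluctuation cannot simply be discarded; one must show it is asymptotically \emph{independent of} or at least jointly Gaussian with the $\wh\lam_j$ fluctuations and verify that its contribution is already accounted for, or else argue that the cross-covariance between $\wh\lam_k$ and $\tr(\wh\fSi_N)$ is what makes the stated $\wh{\si_{-k}^2}$ correct. Resolving this bookkeeping — pinning down precisely which covariances survive the normalization — is where the real work lies; the delta-method skeleton and the final Slutsky replacement are routine by comparison.
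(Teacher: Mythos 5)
Your skeleton (joint CLT for the principal eigenvalues, negligibility of the bulk, delta method, then Slutsky for the variance estimate) is exactly the paper's route, but two steps as written are wrong or unresolved.

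First, the linearization is incorrect. Since $\tr(\wh{\fSi}_N)-\wh{\lam}_k=\sum_{j\neq k, j\le r}\wh{\lam}_j+\sum_{j>r}\wh{\lam}_j$ does not involve $\wh{\lam}_k$ at all, the coefficient of $\wh{\lam}_k-\lam_k$ in the expansion is $1/(\tr(\fSi)-\lam_k)$, not $\tr(\fSi)/(\tr(\fSi)-\lam_k)^2$; you differentiated $x_k/(s-x_k)$ holding $s$ fixed even though $s$ contains $x_k$. Consequently the variance you compute has leading term $\lam_k^2(\tr\fSi)^2\si_{\lam_k}^2/(\tr\fSi-\lam_k)^4$ rather than the correct $\lam_k^2\si_{\lam_k}^2/(\tr\fSi-\lam_k)^2$; these differ by the factor $\bigl(\tr\fSi/(\tr\fSi-\lam_k)\bigr)^2=\bigl(1+\theta_k/(\sum_{i\neq k}\theta_i+\int t\,dH)\bigr)^2>1$, so your claim that ``a short rearrangement matches the population version of $\wh{\si_{-k}^2}$'' is false. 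With the correct derivative the match is immediate. (The paper applies the delta method to the vector $(\wh\lam_1/N,\ldots,\wh\lam_r/N,\tr(\fS_{-r})/N)$ with limiting covariance $\diag(\theta_1^2\si_{\lam_1}^2,\ldots,\theta_r^2\si_{\lam_r}^2,0)$.)

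Second, your treatment of the bulk is internally contradictory and is left as an open problem at the end. The confusion comes from conflating $\tr(\wh\fSi_N)-\tr(\fSi)$, whose $O_P(N/\sqrt T)$ fluctuation is driven by the $r$ spikes and is already carried by the $\wh\lam_j-\lam_j$ terms in the expansion, with the non-principal sum $\tr(\fS_{-r})=\sum_{j>r}\wh\lam_j$, which is the only piece that needs a separate argument. The paper shows $\sqrt T\bigl(\tr(\fS_{-r})-\tr(\fSi_{-r})\bigr)/N\pc 0$ by (i) Weyl's inequality, which bounds $\sum_{j\le r}\wh\lam_j-\sum_{j\le r}\ga_j(\fS_{11})$ by $r$ times the extreme eigenvalues of $\fS_{22}$ and hence reduces $\tr(\fS_{-r})$ to $\tr(\fS_{22})$, and (ii) the substitution-principle CLT for linear spectral statistics (Theorem~2.1 of \cite{ZBY2015}), which gives $\tr(\fS_{22})-\tr(\fSi_{-r})=O_P(1)$ --- not $O_P(\sqrt{N/T})$ heuristics, and certainly not $O_P(N/\sqrt T)$. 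With that, the bulk's contribution to $\sqrt T$ times the ratio difference is $O_P(\sqrt T/N)=o_P(1)$, there is no cross-covariance to track, and no asymptotic-independence argument is needed. As it stands, your proposal both asserts the bulk is negligible and asserts it is of the same order as the retained terms, and then stops; that step must actually be proved along the lines above.
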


\begin{rmk}
Theorem~\ref{thm:clt_ratio} can be used to construct the confidence interval for the ratio $\varrho_k: = {\lambda_k}/{\tr(\fSi)}$.  This follows easily from \eqref{eq3.2} and the fact that,
if we write $\wt{\varrho}={\lambda_k}/{(\tr(\fSi)-\lambda_k)}$, then
%${\lambda_k}/{(\tr(\fSi)-\lambda_k)} = {\varrho_k}/{(1-\varrho_k)}$,
$\varrho_k = \wt{\varrho}_k/(1+\wt{\varrho}_k)$,
which is a strictly increasing function of~$\wt{\varrho}_k$.
\end{rmk}

\begin{thm}\label{thm:clt_ev}
Under Assumptions A--C, for each $1\le k\le r$, the principal sample eigenvector $\wh{\fv}_k$ satisfies
\[
T \(1-\langle \fv_k, \wh{\fv}_k \rangle^2 - \dfrac{1}{T\wh{\lambda}_k} \sum_{j=r+1}^N \dfrac{\wh{\lam}_j}{(1-\wh{\lam}_j/\wh{\lambda}_k)^2} \) \ \wk \ \sum_{i\neq k,i=1}^r \om_{ki}\cdot Z_i^2,
\]
where
$\om_{ki} = {\theta_k \theta_i}/{(\theta_k - \theta_i )^2} $, which can be consistently estimated by
\[
\wh{\om_{ki}} = \dfrac{\wh{\lam}_k \wh{\lam}_i}{(\wh{\lam}_k - \wh{\lam}_i )^2},
\]
and
$Z_i$'s are i.i.d standard normal random variables.
\end{thm}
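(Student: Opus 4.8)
The plan is to pass to the eigenbasis of $\fSi$ and, via a Schur complement, reduce the whole problem to the leading eigenvector of an $r\times r$ random matrix. \textbf{Setup and reduction.} By rotational invariance — replacing $\fy_t$ by $\fV^\mT\fy_t=\fLa^{1/2}\fz_t$ changes neither the sample eigenvalues, nor $|\langle\fv_k,\wh\fv_k\rangle|$, nor the correction term — I would assume $\fV=\fI$, so $\fSi=\fLa$, $\fv_k=\fe_k$, and $\wh\fSi_N=\frac1T\fLa^{1/2}\fZ\fZ^\mT\fLa^{1/2}$ with $\fZ=(\fz_1,\dots,\fz_T)$. Split coordinates into the principal block $\{1,\dots,r\}$ and the bulk block $\{r+1,\dots,N\}$: write $\fLa=\diag(\fLa_{(1)},\fLa_{(2)})$, let $\fZ_{(1)},\fZ_{(2)}$ be the corresponding row blocks of $\fZ$, and set $\fS_{(\ell)}=\frac1T\fLa_{(\ell)}^{1/2}\fZ_{(\ell)}\fZ_{(\ell)}^\mT\fLa_{(\ell)}^{1/2}$, $\fC=\frac1T\fLa_{(1)}^{1/2}\fZ_{(1)}\fZ_{(2)}^\mT\fLa_{(2)}^{1/2}$, so $\wh\fSi_N$ has block form with diagonal blocks $\fS_{(1)},\fS_{(2)}$ and off-diagonal block $\fC$. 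By Theorem~\ref{thm:clt_spike}, $\wh\lam_k/\lam_k=1+O_p(T^{-1/2})$, so with probability tending to one $\wh\lam_k$ is a simple sample eigenvalue, separated by $\asymp N$ from the other principal eigenvalues and from the bounded spectrum of $\fS_{(2)}$. Writing $\wh\fv_k=(\fu^\mT,\fw^\mT)^\mT$ with $\fu\in\bR^r$, $\fw\in\bR^{N-r}$ and $\|\fu\|^2+\|\fw\|^2=1$, one has $\langle\fv_k,\wh\fv_k\rangle=\fu[k]$ and hence
\[
1-\langle\fv_k,\wh\fv_k\rangle^2=\|\fw\|^2+\sum_{i\ne k,\,i\le r}\fu[i]^2 .
\]
The Schur complement of $\wh\fSi_N\wh\fv_k=\wh\lam_k\wh\fv_k$ gives $\fw=(\wh\lam_k\fI-\fS_{(2)})^{-1}\fC^\mT\fu$ and identifies $\fu$ as the unit $\wh\lam_k$-eigenvector of the $r\times r$ matrix $\fM_k=\fS_{(1)}+\fC(\wh\lam_k\fI-\fS_{(2)})^{-1}\fC^\mT$. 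I would then handle the two summands separately.

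\textbf{The eigenvector sum (stochastic part).} Moment estimates give $\fS_{(1)}=\fLa_{(1)}+\fG$ with $\fG[i,j]=\frac{\sqrt{\lam_i\lam_j}}T\sum_t(\fz_t[i]\fz_t[j]-\delta_{ij})$, whose off-diagonal entries are $O_p(\sqrt{\lam_i\lam_j/T})=o_p(N)$, negligible against the gaps $\lam_k-\lam_i\asymp N$; moreover $\fC(\wh\lam_k\fI-\fS_{(2)})^{-1}\fC^\mT$ has $O_p(1)$ diagonal and $O_p(T^{-1/2})$ off-diagonal entries, perturbing $\fu$ by only $o_p(T^{-1/2})$ in each off-diagonal coordinate. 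First-order eigenvector perturbation of $\fLa_{(1)}+\fG$ then yields, for $i\ne k$,
\[
\fu[i]=\frac{\sqrt{\lam_i\lam_k}}{\lam_k-\lam_i}\cdot\frac1T\sum_{t=1}^T\fz_t[i]\fz_t[k]+o_p(T^{-1/2}),
\]
the second-order contribution being $O_p(T^{-1})$. Squaring, summing and scaling by $T$,
\[
T\sum_{i\ne k}\fu[i]^2=\sum_{i\ne k}\frac{\lam_i\lam_k}{(\lam_k-\lam_i)^2}\Bigl(\tfrac1{\sqrt T}\sum_{t=1}^T\fz_t[i]\fz_t[k]\Bigr)^{2}+o_p(1).
\]
The vectors $(\fz_t[i]\fz_t[k])_{i\ne k,\,i\le r}$, $t=1,\dots,T$, are i.i.d.\ with mean $0$ and covariance $\fI_{r-1}$, so $(\frac1{\sqrt T}\sum_t\fz_t[i]\fz_t[k])_{i\ne k}\wk N(0,\fI_{r-1})$ by the multivariate CLT; since $\lam_i\lam_k/(\lam_k-\lam_i)^2\to\theta_i\theta_k/(\theta_k-\theta_i)^2=\om_{ki}$, Slutsky's theorem gives $T\sum_{i\ne k}\fu[i]^2\wk\sum_{i\ne k}\om_{ki}Z_i^2$ with $Z_i$ i.i.d.\ standard normal.

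\textbf{The $\|\fw\|^2$ term (feasible correction).} Since $\|\fu-\fe_k\|=o_p(1)$,
\[
\|\fw\|^2=\frac{\lam_k}{T^2}\,\fz_{(1),k}^\mT\fZ_{(2)}^\mT\fLa_{(2)}^{1/2}(\wh\lam_k\fI-\fS_{(2)})^{-2}\fLa_{(2)}^{1/2}\fZ_{(2)}\fz_{(1),k}\,\bigl(1+o_p(1)\bigr),
\]
where $\fz_{(1),k}$ is the $k$th row of $\fZ$. Since $\fz_{(1),k}$ has independent mean-$0$ unit-variance coordinates with bounded fourth moment and is effectively independent of $\fS_{(2)}$, the quadratic form concentrates about its conditional mean, giving $T\|\fw\|^2=\lam_k\sum_{j}\mu_j/(\wh\lam_k-\mu_j)^2+o_p(1)$, where $\mu_1\ge\cdots\ge\mu_{N-r}$ are the eigenvalues of $\fS_{(2)}$. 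Because $\fS_{(2)}$ is a principal submatrix of $\wh\fSi_N$, Cauchy interlacing plus boundedness of the non-principal sample eigenvalues yield $\sum_j\mu_j/(\wh\lam_k-\mu_j)^2=\sum_{j>r}\wh\lam_j/(\wh\lam_k-\wh\lam_j)^2+o_p(N^{-1})$ (using that $x\mapsto x/(\wh\lam_k-x)^2$ is $O(N^{-2})$-Lipschitz on $[0,C_0]$, that $\wh\lam_{j+r}\le\mu_j\le\wh\lam_j$, and $\wh\lam_k\asymp N$), while replacing $\lam_k$ by $\wh\lam_k$ costs $O_p(\wh\lam_k T^{-1/2})\cdot O(N^{-1})=o_p(1)$. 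Rearranging, $\wh\lam_k\sum_{j>r}\wh\lam_j/(\wh\lam_k-\wh\lam_j)^2=T\cdot\frac1{T\wh\lam_k}\sum_{j>r}\wh\lam_j/(1-\wh\lam_j/\wh\lam_k)^2$, so $T\|\fw\|^2$ equals $T$ times the stated correction up to $o_p(1)$. Adding the two pieces proves the CLT; consistency of $\wh{\om_{ki}}$ is immediate from $\wh\lam_i/\lam_i\to1$ and $\lam_i/N\to\theta_i$.

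\textbf{Main obstacle.} The delicate points are in the last two steps. First, the Schur reduction must be made rigorous despite the implicit dependence of $\wh\lam_k$ on the whole sample; I would run the perturbation analysis with $\wh\lam_k$ replaced by a free parameter over an $O(\wh\lam_k T^{-1/2})$-neighborhood of $\lam_k$ and then appeal to Theorem~\ref{thm:clt_spike}. Second, all the concentration bounds (for $\fG$, for $\fC(\wh\lam_k\fI-\fS_{(2)})^{-1}\fC^\mT$, and for the bulk quadratic form) must be established under only a fourth-moment assumption, which calls for truncation and Bai--Silverstein-type estimates rather than sub-Gaussian concentration. Third — and this is the genuinely delicate part — identifying the deterministic limit $\lam_k\sum_j\mu_j/(\wh\lam_k-\mu_j)^2$ of $T\|\fw\|^2$ with the data-driven correction demands sufficiently sharp interlacing/rigidity estimates relating the non-principal sample eigenvalues to the spectrum of $\fS_{(2)}$. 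Throughout, the divergence of the spikes ($\lam_k\asymp N$) forces an explicit $N$-dependence into every bound, which is the main source of bookkeeping.
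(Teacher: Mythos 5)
Your proposal is correct and follows essentially the same route as the paper: the same rotation to the eigenbasis, the same Schur-complement reduction, and the same split of $1-\langle \fv_k,\wh{\fv}_k\rangle^2$ into the spiked-block sum $\sum_{i\neq k}\fu[i]^2$ (first-order perturbation plus a CLT for $T^{-1/2}\sum_t \fz_t[i]\fz_t[k]$) and the bulk term $\|\fw\|^2$ (concentration of a quadratic form around $\tr\lb(\wh\lam_k\fI-\fS_{22})^{-2}\fS_{22}\rb$, then Weyl/interlacing to pass to the non-principal sample eigenvalues). The only inessential differences are that the paper invokes the Bai--Yao and Wang--Su--Yao sesquilinear-form CLTs and proves a full $\sqrt{T}$-rate CLT for the bulk piece (its Proposition 3, needed elsewhere), whereas for this theorem your weaker $o_p(1)$ concentration suffices.
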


\begin{rmk}
The convergence rate of $\langle \fv_k, \wh{\fv}_k \rangle^2$ has been established in Theorem 3.2 of \cite{WF17}. We derive the corresponding limiting distribution at the boundary of the parameter space, which is much more difficult to prove.
\end{rmk}

The proofs of Theorems \ref{thm:clt_spike}, \ref{thm:clt_ratio} and \ref{thm:clt_ev} are given in  the supplementary material.

\section{Two-sample Tests} \label{sec:two_sample_test}
We now discuss how to conduct the three tests mentioned in the Introduction.

Suppose that we have two groups of observations of the same dimension $N$:
\[
\fy_1^{(1)},\ldots,\fy_{T_1}^{(1)}, ~~~~
\textrm{and} ~~~
\fy_1^{(2)},\ldots,\fy_{T_2}^{(2)},
\]
which are drawn independently from two  populations with mean zero and covariance matrices $\fSi^{(1)}$ and $\fSi^{(2)}$. We assume that Assumption B holds for each group of observations. Moreover, analogous to Assumption C, we have
\[
\lim_{N\to\infty} \frac{N}{T_i}= \rho_i \in (0,+\infty),\q i=1,2.
\]
Finally, analogous to Assumption A,  with the spectral decompositions of $\fSi^{(i)},i=1,2$, given by
\begin{eqnarray*}
\fSi^{(i)} &=& {\fV}^{(i)} \fLa^{(i)} {{\fV}^{(i)}}^\mT,
\end{eqnarray*}
where
$
\fLa^{(i)}= \diag(\lambda_1^{(i)},\ldots,\lambda_r^{(i)},\lambda_{r+1}^{(i)}, \ldots, \lambda_N^{(i)}),$ and
${\fV}^{(i)}=({\fv}_1^{(i)},\ldots,{\fv}_r^{(i)},{\fv}_{r+1}^{(i)},\ldots{\fv}_N^{(i)}),$
 we assume, for each population, there are $r$  principal  eigenvalues, which satisfy
\[
\lim_{N\to\infty} \dfrac{\lambda_k^{(i)}}{N} \ = \ \theta_k^{(i)} \in (0,+\infty), ~~~~~~~ \textrm{for } ~ 1\le k \le r,
~ i=1,2;
\]
while the remaining eigenvalues $\lambda_j^{(i)}, r+1 \le j \le N$, are {uniformly}  bounded {and have a limiting empirical distribution.} The two liming empirical distributions for the two populations can be different.

Naturally, our tests will be based on the sample covariance matrices
\[
\wh{\fSi}_N^{(i)}=\frac{ 1}{T_i} \sum_{j=1}^{T_i} \fy_j^{(i)}{\fy^{(i)}_j}^\mT, \q i=1,2.
\]
Write their spectral decompositions as
\begin{eqnarray*}
\wh{\fSi}_N^{(i)}
=\wh{\fV}^{(i)} \wh{\fLa}^{(i)}
{(\wh{\fV}^{(i)})}^\mT
\end{eqnarray*}
where
\[
\wh{\fLa}^{(i)}=\diag(\wh{\lam}_1^{(i)}, \ldots, \wh{\lam}_N^{(i)}), ~~
\wh{\fV}^{(i)}=(\wh{\fv}_1^{(i)}, \ldots, \wh{\fv}_N^{(i)} ).
\]

%%----------------------------------------%%
\subsection{Testing equality of principal  eigenvalues}\label{ssec:test_eq_lam}

To test $H_0^{(I,k)}: ~ \lambda_k^{(1)}=\lambda_k^{(2)}$, we  use the following test statistic
\[
T_{\lam k}= \sqrt{\dfrac{T_1T_2}{T_1 (\wh{\si_{\lam_k}^2})^{(2)} +
	T_2  (\wh{\si_{\lam_k}^2})^{(1)}
	}} \cdot
\(
\dfrac{\wh{\lam}_k^{(1)}}{\wh{\lam}_k^{(2)}} -1
\) ,
\]
where
\[
(\wh{\si_{\lam_k}^2})^{(i)}
%= \wh{\E(\fz_1^{(i)}[k])^4 } -1
=
\dfrac{1}{(\wh{\lam}_k^{(i)})^2 \ T_i} \sum_{j=1}^{T_i} \( (\wh{\fv}_k^{(i)})^\mT \fy_j^{(i)} \)^4-1,\q i=1,2.
\]

\begin{thm}\label{thm:test_tlam}
Under  null hypothesis $H_0^{(I,k)}$ and Assumptions  A--C,
the proposed test statistic
$T_{\lam k}$ converges weakly to the standard normal distribution.
\end{thm}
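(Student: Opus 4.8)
The plan is to derive the two-sample CLT for $T_{\lambda k}$ from the one-sample CLT of Theorem~\ref{thm:clt_spike}, exploiting independence of the two samples. First I would apply Theorem~\ref{thm:clt_spike} separately to each population: under $H_0^{(I,k)}$ we have $\lambda_k^{(1)}=\lambda_k^{(2)}=:\lambda_k$, and setting $\xi^{(i)} := \sqrt{T_i}\,(\wh{\lambda}_k^{(i)}/\lambda_k - 1)$ we get $\xi^{(i)} \wk N(0,(\si_{\lambda_k}^2)^{(i)})$ as $N\to\infty$, where $(\si_{\lambda_k}^2)^{(i)} = \E(\fz_1^{(i)}[k])^4 - 1$ is the population-$i$ fourth-cumulant constant. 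Because the two samples are drawn independently, $(\xi^{(1)},\xi^{(2)})$ jointly converges to a bivariate normal with diagonal covariance $\diag\big((\si_{\lambda_k}^2)^{(1)},(\si_{\lambda_k}^2)^{(2)}\big)$.

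Next I would linearize the ratio. Write
\[
\dfrac{\wh{\lambda}_k^{(1)}}{\wh{\lambda}_k^{(2)}} - 1
= \dfrac{\wh{\lambda}_k^{(1)}/\lambda_k - \wh{\lambda}_k^{(2)}/\lambda_k}{\wh{\lambda}_k^{(2)}/\lambda_k}
= \dfrac{\xi^{(1)}/\sqrt{T_1} - \xi^{(2)}/\sqrt{T_2}}{1 + \xi^{(2)}/\sqrt{T_2}}.
\]
Since $\xi^{(2)}/\sqrt{T_2}\pc 0$, the denominator tends to $1$, and by Slutsky's theorem the left side behaves like $\xi^{(1)}/\sqrt{T_1} - \xi^{(2)}/\sqrt{T_2}$ up to $o_p$ of that order. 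Multiplying by $\sqrt{T_1T_2}$, we get
\[
\sqrt{T_1T_2}\Big(\dfrac{\wh{\lambda}_k^{(1)}}{\wh{\lambda}_k^{(2)}} - 1\Big)
= \sqrt{T_2}\,\xi^{(1)} - \sqrt{T_1}\,\xi^{(2)} + o_p(1)
\wk N\!\Big(0,\; T_2 (\si_{\lambda_k}^2)^{(1)} + T_1 (\si_{\lambda_k}^2)^{(2)}\Big)
\]
in the sense that, dividing by $\sqrt{T_2 (\si_{\lambda_k}^2)^{(1)} + T_1 (\si_{\lambda_k}^2)^{(2)}}$, the result is asymptotically $N(0,1)$. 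This is exactly the normalizing factor appearing in $T_{\lambda k}$ once the population variances are replaced by estimators.

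The remaining step is to justify replacing $(\si_{\lambda_k}^2)^{(i)}$ by $(\wh{\si_{\lambda_k}^2})^{(i)}$; here I would invoke the consistency statement already recorded in the remark after Theorem~\ref{thm:clt_ev}, namely $(\wh{\si_{\lambda_k}^2})^{(i)} = (\si_{\lambda_k}^2)^{(i)} + o_p(1)$, together with the fact that both constants are bounded away from $0$ and $\infty$ (the lower bound uses $\E(\fz_1[k])^4 \ge \var(\fz_1[k]^2)+1$; one needs the fourth moment strictly exceeds that of a degenerate variable, which holds under the stated moment assumptions, and in any degenerate edge case the test statistic is handled separately). Then the ratio $\big(T_1(\wh{\si_{\lambda_k}^2})^{(2)}+T_2(\wh{\si_{\lambda_k}^2})^{(1)}\big)\big/\big(T_1(\si_{\lambda_k}^2)^{(2)}+T_2(\si_{\lambda_k}^2)^{(1)}\big) \pc 1$ (the $o_p(1)$ perturbations are dominated because $T_1,T_2\to\infty$ proportionally), and a final application of Slutsky gives $T_{\lambda k}\wk N(0,1)$.

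The main obstacle is not any of the above—each piece is routine once Theorem~\ref{thm:clt_spike} is in hand—but rather making sure the joint weak convergence across the two independent samples is legitimate when $N\to\infty$ drives both $T_1,T_2\to\infty$ simultaneously; since the samples are independent, the joint characteristic function factors and this is immediate, so in fact the proof is short. The only genuine care required is the uniform boundedness (away from zero) of the variance estimators, so that the Slutsky steps on the normalizing constant are valid; this is where the moment assumptions in Assumption~B are used.
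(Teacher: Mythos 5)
Your proposal is correct and is essentially an expanded version of the paper's own (one-line) argument: apply Theorem \ref{thm:clt_spike} to each sample, use independence for the joint limit, linearize the ratio (the Delta method), and conclude with consistency of the variance estimators and Slutsky. The only nitpick is that the consistency of $(\wh{\si_{\lam_k}^2})^{(i)}$ is recorded in the remark following Theorem \ref{thm:clt_spike} (where it is attributed to Theorem \ref{thm:clt_ev}), but this does not affect the substance.
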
	
Theorem \ref{thm:test_tlam} follows directly from Theorem \ref{thm:clt_spike} and the Delta method.
%%-------------------------------------%%

\subsection{Testing equality of ratios of principal  eigenvalues}\label{ssec:test_eq_ratio}

The null hypothesis
\[
H_0^{(II,k)}: ~ \dfrac{\lambda_k^{(1)}}{\tr(\fSi^{(1)})}
= \dfrac{\lambda_k^{(2)}}{\tr(\fSi^{(2)})}
\]
is equivalent to
\[
H_0^{(II,k)'}: \dfrac{\lambda_k^{(1)}}{\tr(\fSi^{(1)})-\lambda_k^{(1)}}  \ = \
\dfrac{\lambda_k^{(2)}}{\tr(\fSi^{(2)})-\lambda_k^{(2)}}.
\]
Based on such an observation, we propose the following test statistic
\[
T_{ek}:= \dfrac{\sqrt N
	\(
	\dfrac{\wh{\lam}_k^{(1)}}{\tr(\wh{\fSi}_N^{(1)})-\wh{\lam}_k^{(1)} } -
	\dfrac{\wh{\lam}_k^{(2)}}{\tr(\wh{\fSi}_N^{(2)})-\wh{\lam}_k^{(2)}}
	\)}
{\sqrt{\dfrac{N}{T_1} \ \wh{{\si_{-k}^2}}^{(1)} + \dfrac{N}{T_2} \ \wh{{\si_{-k}^2}}^{(2)} } }
\]
where
\[
\wh{{\si_{-k}^2}}^{(i)}=
\dfrac{(\wh{\lam}_k^{(i)})^2}{ \(\tr(\wh{\fSi}_N^{(i)}) - \wh{\lam}_k^{(i)} \)^2}
\lb
\wh{\si_{\lam_k}^2 }^{(i)} +\dfrac{\sum_{j\neq k,j=1}^r(\wh{\lam}_j^{(i)})^2 \ \wh{\si_{\lam_j}^2 }^{(i)}  }{\( \tr(\wh{\fSi}_N^{(i)}) - \wh{\lam}_k^{(i)} \)^2}
\rb,\q    i=1,2.
\]

\begin{thm}\label{thm:test_te}
Under the null hypothesis $H_0^{(II,k)}$ and Assumptions A--C,  the  test statistic $T_{ek}$ converges weakly to the standard normal distribution.
\end{thm}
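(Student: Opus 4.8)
The plan is to obtain Theorem~\ref{thm:test_te} from the one-sample CLT of Theorem~\ref{thm:clt_ratio}, applied to each of the two independent groups, followed by Slutsky's theorem. The first move is to pass to the equivalent null $H_0^{(II,k)'}$, which is what the numerator of $T_{ek}$ actually compares. Set, for $i=1,2$,
\[
R^{(i)}:=\frac{\wh{\lam}_k^{(i)}}{\tr(\wh{\fSi}_N^{(i)})-\wh{\lam}_k^{(i)}},\qquad
\varrho^{(i)}:=\frac{\lambda_k^{(i)}}{\tr(\fSi^{(i)})-\lambda_k^{(i)}} .
\]
The numerator of $T_{ek}$ equals $\sqrt N\,(R^{(1)}-R^{(2)})$, and under $H_0^{(II,k)'}$ one has $\varrho^{(1)}=\varrho^{(2)}$, hence
\[
\sqrt N\,(R^{(1)}-R^{(2)})=\sqrt N\,\big(R^{(1)}-\varrho^{(1)}\big)-\sqrt N\,\big(R^{(2)}-\varrho^{(2)}\big).
\]

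I would then rescale each term through $\sqrt N\,(R^{(i)}-\varrho^{(i)})=\sqrt{N/T_i}\cdot\sqrt{T_i}\,(R^{(i)}-\varrho^{(i)})$. Theorem~\ref{thm:clt_ratio}, applied to group $i$ (the analogues of Assumptions~A--C for the two groups being in force), gives $\sqrt{T_i/\wh{\si_{-k}^2}^{(i)}}\,(R^{(i)}-\varrho^{(i)})\wk N(0,1)$; combined with $\wh{\si_{-k}^2}^{(i)}\pc\si_\infty^{(i)}$ for a deterministic limit $\si_\infty^{(i)}$ (see below), this yields $\sqrt{T_i}\,(R^{(i)}-\varrho^{(i)})\wk N(0,\si_\infty^{(i)})$. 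Since $N/T_i\to\rho_i$ and the two groups are independent, the pair $\big(\sqrt{T_1}(R^{(1)}-\varrho^{(1)}),\sqrt{T_2}(R^{(2)}-\varrho^{(2)})\big)$ converges jointly to a pair of independent centered Gaussians, so
\[
\sqrt N\,(R^{(1)}-R^{(2)})\ \wk\ N\big(0,\ \rho_1\si_\infty^{(1)}+\rho_2\si_\infty^{(2)}\big).
\]
The denominator of $T_{ek}$, which is $\big(\tfrac{N}{T_1}\wh{\si_{-k}^2}^{(1)}+\tfrac{N}{T_2}\wh{\si_{-k}^2}^{(2)}\big)^{1/2}$, converges in probability to $\big(\rho_1\si_\infty^{(1)}+\rho_2\si_\infty^{(2)}\big)^{1/2}$, and Slutsky's theorem finishes the argument: $T_{ek}\wk N(0,1)$.

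The one point that requires real verification — and the main, if modest, obstacle — is the de-studentization step, namely that $\wh{\si_{-k}^2}^{(i)}$ has a \emph{deterministic} probability limit. By the closed-form expression for $\wh{\si_{-k}^2}^{(i)}$ and the continuous mapping theorem, this reduces to the convergence in probability of $\wh{\lam}_j^{(i)}/N$ and $\wh{\si_{\lam_j}^2}^{(i)}$ for $1\le j\le r$ and of $\tr(\wh{\fSi}_N^{(i)})/N$. The first two are supplied by Theorem~\ref{thm:clt_spike} and the consistency of $\wh{\si_{\lam_j}^2}$ noted after it; the last follows from a routine second-moment bound — with $\tr(\wh{\fSi}_N^{(i)})=\tfrac{1}{T_i}\sum_t\|\fy_t^{(i)}\|^2$, Assumptions~A and~B give $\var\big(\tr(\wh{\fSi}_N^{(i)})\big)=\tfrac1{T_i}\sum_j(\lambda_j^{(i)})^2\big(\E(\fz_1^{(i)}[j])^4-1\big)=O(N^2/T_i)=O(N)$, negligible next to $\tr(\fSi^{(i)})^2\asymp N^2$, so $\tr(\wh{\fSi}_N^{(i)})/N\pc\lim_N\tr(\fSi^{(i)})/N$. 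Once these limits are in place the displays above follow and all remaining steps are routine.
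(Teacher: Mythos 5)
Your argument is correct and follows exactly the route the paper intends: the paper dispenses with this theorem in one line as "a straightforward consequence of Theorem~\ref{thm:clt_ratio}," and your proposal supplies precisely the missing details (passing to $H_0^{(II,k)'}$, applying the one-sample CLT to each independent group, de-studentizing via the deterministic limit of $\wh{\si_{-k}^2}^{(i)}$ — which the paper itself establishes at the end of its proof of Theorem~\ref{thm:clt_ratio} — and concluding by independence and Slutsky). No gaps worth noting.
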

Theorem \ref{thm:test_te} is a straightforward consequence of Theorem \ref{thm:clt_ratio}.

%%-------------------------------------%%
\subsection{Testing equality of principal eigenvectors}
\label{ssec:test_tv}
To test $H_0^{(III,k)}: |\langle \fv_k^{(1)}, \fv_k^{(2)} \rangle|=1$, we propose the following test statistic
\begin{equation}\label{stat:Tvk}
\aligned
T_{vk}&:= 2N\(1-|\langle\wh{\fv}_k^{(1)}, \wh{\fv}_k^{(2)}\rangle| \) \\
&\q- \dfrac{N^2}{T_1(N-r)\wh{\lam}_k^{(1)}} \sum_{j=r+1}^N \wh{\lam}_j^{(1)}
 - \dfrac{N^2}{T_2(N-r)\wh{\lam}_k^{(2)}} \sum_{j=r+1}^N  \wh{\lam}_j^{(2)}.
\endaligned
\end{equation}

\begin{thm}\label{thm:test_tv}
Under null hypothesis $H_0^{(III,k)}$ and Assumptions A--C,
suppose further  that  $\fXi_{11}^*:=\left(\lim_{N\to\infty} \langle {\fv}_s^{(1)}, {\fv}_t^{(2)} \rangle\right)_{s,t=1,\ldots,r}$ exist. Then
the proposed test statistic~$T_{vk}$ converges weakly as follows:
	\[
	T_{vk} \ \wk \ \fq_k^\mT
	\begin{pmatrix}
	\fI_{r-1} & -\fXi_{11,-k}^* \\
	(-{\fXi_{11,-k}^*})^\mT & \fI_{r-1}
	\end{pmatrix}
	\fq_k,
	\]
where $\fq_k$ follows a multivariate normal distribution with mean zero and covariance matrix
	\[
	\aligned \fD_k=\diag
      &\( \rho_1\om_{k1}^{(1)}, \ldots, \rho_1\om_{k(k-1)}^{(1)},\rho_1\om_{k(k+1)}^{(1)},\ldots,\rho_1\om_{kr}^{(1)},\right.\\
	&\ \left.\rho_2\om_{k1}^{(2)},\ldots, \rho_2\om_{k(k-1)}^{(2)},\rho_2\om_{k(k+1)}^{(2)},\ldots,\rho_2\om_{kr}^{(2)} \),
    \endaligned
	\]
	\[
	\om_{kj}^{(i)}=
           \dfrac{ \theta_k^{(i)}\theta_j^{(i)} }{( \theta_k^{(i)} -\theta_j^{(i)} )^2 },~~~~
	\textrm{for}~ i=1,2, ~~ j=1,\ldots,k-1,k+1,\ldots,r,
	\]
and
$\fXi_{11,-k}^*$ is the matrix obtained  by deleting the $k$th row and $k$th column of $\fXi_{11}^*$. Furthermore, $\om_{kj}^{(i)}$ and $\fXi_{11}^*$ can  be consistently estimated by
\[
\wh{\om_{kj}^{(i)}} = \dfrac{\wh{\lam}_k^{(i)} \wh{\lam}_j^{(i)}}{(\wh{\lam}_k^{(i)} - \wh{\lam}_j^{(i)} )^2}, \q
\mbox{ and }
\q \wh{\fXi_{11}^*}=\left( \langle \wh{\fv_s^{(1)}}, \wh{\fv_t^{(2)}} \rangle\right)_{s,t=1,\ldots,r},
\]
respectively.
\end{thm}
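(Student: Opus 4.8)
The plan is to reduce the two-sample statement to the one-sample CLT of Theorem~\ref{thm:clt_ev} applied to each of the two independent samples. Under $H_0^{(III,k)}$ eigenvectors are defined only up to sign, so put $\fv_k:=\fv_k^{(2)}=\fv_k^{(1)}$ and fix the signs of the sample eigenvectors so that $\al_i:=\langle\fv_k,\wh\fv_k^{(i)}\rangle\ge 0$; write the orthogonal decomposition $\wh\fv_k^{(i)}=\al_i\fv_k+\fw_i$ with $\fw_i\perp\fv_k$, so $\|\fw_i\|^2=1-\al_i^2=:\eta_i$. Theorem~\ref{thm:clt_ev} and Assumption~C give $\al_i\to 1$ and $T_i\eta_i=O_P(1)$, hence $\eta_i=O_P(1/N)$, and a short expansion (with $\langle\fw_1,\fw_2\rangle^2\le\eta_1\eta_2$) gives $1-\langle\wh\fv_k^{(1)},\wh\fv_k^{(2)}\rangle^2=\eta_1+\eta_2-2\al_1\al_2\langle\fw_1,\fw_2\rangle+O_P(1/N^2)$. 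Granting $\langle\fw_1,\fw_2\rangle=O_P(1/N)$ (step 3 below), $\langle\wh\fv_k^{(1)},\wh\fv_k^{(2)}\rangle\to 1$, so with probability tending to one the absolute value in \eqref{stat:Tvk} may be dropped and $2(1-|x|)=(1-x^2)(1+o_P(1))$; together with $\al_1\al_2=1+o_P(1)$ this reduces the claim to the joint weak limit of
\[
T_{vk}=\big(N\eta_1-\wh b_1\big)+\big(N\eta_2-\wh b_2\big)-2N\langle\fw_1,\fw_2\rangle+o_P(1),\qquad \wh b_i:=\frac{N^2}{T_i(N-r)\wh\lam_k^{(i)}}\sum_{j>r}\wh\lam_j^{(i)}.
\]

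For the first two terms, let $\fg_i:=\big(\sqrt N\,\langle\fv_j^{(i)},\wh\fv_k^{(i)}\rangle\big)_{1\le j\le r,\,j\ne k}$, an $(r-1)$-vector, and let $\fw_i^{\mathrm b}$ be the projection of $\fw_i$ onto the bulk space of $\fSi^{(i)}$. Since $\eta_i=N^{-1}\|\fg_i\|^2+\|\fw_i^{\mathrm b}\|^2$ and, by Theorem~\ref{thm:clt_ev} together with $\wh\lam_k^{(i)}/N\to\th_k^{(i)}>0$ and $\wh\lam_j^{(i)}\le C_0$ for $j>r$, one has $N\|\fw_i^{\mathrm b}\|^2=\wh b_i+o_P(1)$, it follows that $N\eta_i-\wh b_i=\|\fg_i\|^2+o_P(1)$. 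The proof of Theorem~\ref{thm:clt_ev} moreover provides the joint convergence $\fg_i\wk\fq_k^{(i)}\sim N\big(0,\diag(\rho_i\om_{kj}^{(i)})_{j\ne k}\big)$; by independence of the two samples, $(\fg_1,\fg_2)\wk\fq_k:=\big((\fq_k^{(1)})^\mT,(\fq_k^{(2)})^\mT\big)^\mT\sim N(0,\fD_k)$, and $N\eta_i-\wh b_i\wk\|\fq_k^{(i)}\|^2$.

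For the cross term, split $\fw_i=\fw_i^{\mathrm p}+\fw_i^{\mathrm b}$ into its components along $\{\fv_j^{(i)}:1\le j\le r,\,j\ne k\}$ and along $\{\fv_j^{(i)}:j>r\}$; then $\sqrt N\,\fw_i^{\mathrm p}$ has coordinate vector $\fg_i$ in that first basis. The proof of Theorem~\ref{thm:clt_ev} also gives the leading representation $\fw_i^{\mathrm b}\approx(T_i\wh\lam_k^{(i)})^{-1}\sqrt{\lam_k^{(i)}}\sum_t\fz^{(i)}_t[k]\,\fV^{(i)}_{\mathrm b}(\fLa^{(i)}_{\mathrm b})^{1/2}\fz^{(i)}_{t,\mathrm b}$ (with $\fV^{(i)}_{\mathrm b},\fLa^{(i)}_{\mathrm b}$ the bulk eigenvector matrix and diagonal block of $\fSi^{(i)}$, and $\fz^{(i)}_{t,\mathrm b}$ the bulk sub-vector of $\fz^{(i)}_t$), of squared norm $\asymp\wh b_i/T_i=O_P(1/N)$ and depending on sample $i$ only. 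Using $\langle\fv_m^{(1)},\fv_n^{(2)}\rangle\to(\fXi_{11,-k}^*)_{mn}$ for $m,n\ne k$ and the independence of the samples,
\[
N\langle\fw_1^{\mathrm p},\fw_2^{\mathrm p}\rangle=\sum_{m,n\ne k}\big(\sqrt N\,\langle\fv_m^{(1)},\wh\fv_k^{(1)}\rangle\big)\big(\sqrt N\,\langle\fv_n^{(2)},\wh\fv_k^{(2)}\rangle\big)\langle\fv_m^{(1)},\fv_n^{(2)}\rangle\ \wk\ (\fq_k^{(1)})^\mT\fXi_{11,-k}^*\,\fq_k^{(2)},
\]
while the bulk-involving pieces $N\langle\fw_1^{\mathrm p},\fw_2^{\mathrm b}\rangle$, $N\langle\fw_1^{\mathrm b},\fw_2^{\mathrm p}\rangle$, $N\langle\fw_1^{\mathrm b},\fw_2^{\mathrm b}\rangle$ are all $o_P(1)$ (this also gives the bound $\langle\fw_1,\fw_2\rangle=O_P(1/N)$ used above). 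Since all quantities above are, up to $o_P(1)$, continuous functions of $(\fg_1,\fg_2)$, the three pieces converge jointly, and
\[
T_{vk}\ \wk\ \|\fq_k^{(1)}\|^2+\|\fq_k^{(2)}\|^2-2(\fq_k^{(1)})^\mT\fXi_{11,-k}^*\fq_k^{(2)}=\fq_k^\mT\begin{pmatrix}\fI_{r-1}&-\fXi_{11,-k}^*\\-(\fXi_{11,-k}^*)^\mT&\fI_{r-1}\end{pmatrix}\fq_k.
\]
Consistency of $\wh{\om_{kj}^{(i)}}$ follows from $\wh\lam_m^{(i)}/N\to\th_m^{(i)}$ ($m\le r$), and of $\wh{\fXi_{11}^*}$ from $\langle\fv_k,\wh\fv_k^{(i)}\rangle\to 1$ and $\langle\fv_j^{(i)},\wh\fv_k^{(i)}\rangle\to 0$ ($j\ne k$), which give $\langle\wh\fv_s^{(1)},\wh\fv_t^{(2)}\rangle=\langle\fv_s^{(1)},\fv_t^{(2)}\rangle+o_P(1)$.

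The main obstacle is the vanishing of the bulk cross terms: since $\|\fw_i^{\mathrm b}\|\asymp N^{-1/2}$, a crude Cauchy--Schwarz bound gives only $N\langle\fw_1^{\mathrm b},\fw_2^{\mathrm b}\rangle=O_P(1)$, which is not enough. The fix is to insert the explicit noise representation of $\fw_i^{\mathrm b}$ above, exploit the independence of the two samples, and bound the resulting double sum using $\|(\fV^{(1)}_{\mathrm b})^\mT\fV^{(2)}_{\mathrm b}\|_F^2\le N$ and $\|\fLa^{(i)}_{\mathrm b}\|\le C_0$ to get $\var\big(\langle\fw_1^{\mathrm b},\fw_2^{\mathrm b}\rangle\big)=O(N^{-3})$, and similarly for $\langle\fw_1^{\mathrm p},\fw_2^{\mathrm b}\rangle$ and $\langle\fw_1^{\mathrm b},\fw_2^{\mathrm p}\rangle$. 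This Frobenius-norm cancellation — precisely where the independence of the two samples and the assumed stabilization of the cross inner products $\langle\fv_s^{(1)},\fv_t^{(2)}\rangle$ are genuinely needed — is the one new ingredient beyond the one-sample theory and is what makes the two-sample case (projection of $\wh\fv_k^{(1)}$ onto the random direction $\wh\fv_k^{(2)}$) substantially harder than the one-sample case.
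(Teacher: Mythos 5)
Your proposal is correct and follows essentially the same route as the paper's proof: your orthogonal decomposition $\wh\fv_k^{(i)}=\al_i\fv_k+\fw_i^{\mathrm p}+\fw_i^{\mathrm b}$ is the paper's split of $\fu_k^{(i)}$ into $\fu_{kA}^{(i)}$ and $\fu_{kB}^{(i)}$ (and of $\fXi=\fV^{(1)\mT}\fV^{(2)}$ into its four blocks) written in the original coordinates, the quadratic form in $(\fg_1,\fg_2)$ matches the paper's $\fa_k,\fb_k$ construction, and your treatment of the bulk cross terms --- explicit noise representation of $\fw_i^{\mathrm b}$, two-sample independence, and the Frobenius bound on $\fV_{\mathrm b}^{(1)\mT}\fV_{\mathrm b}^{(2)}$ --- is exactly the mechanism behind the paper's proof of \eqref{eqn:3terms}. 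Nothing essential is missing.
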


\begin{cor}\label{cor:test_tvk}
	Under the stronger null hypothesis that
$|\langle \fv_k^{(1)}, \fv_k^{(2)}\rangle|=1$ for all $k=1,\ldots,r$,
we have $\fXi_{11}^*=\fI_{r}$, and the proposed test statistic $T_{vk}$ converges as follows:
\[
T_{vk} \ \wk \ \sum_{j\neq k, j=1}^r \(\rho_1\om_{kj}^{(1)} + \rho_2\om_{kj}^{(2)} \)
\cdot Z_j^2,
\]
where $Z_j$'s are \hbox{i.i.d.} standard normal random variables.
\end{cor}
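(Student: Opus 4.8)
The plan is to derive the corollary directly from Theorem~\ref{thm:test_tv} by specializing the limiting random variable under the stronger null. First I would observe that the hypothesis $|\langle \fv_k^{(1)}, \fv_k^{(2)}\rangle|=1$ for every $k=1,\ldots,r$ forces, up to signs, $\fv_k^{(1)}=\pm\fv_k^{(2)}$ for each $k\le r$ in the limit; since the principal eigenspaces are one-dimensional (the $\theta_k$'s are distinct) and the two collections $\{\fv_1^{(i)},\ldots,\fv_r^{(i)}\}$ span the same limiting $r$-dimensional subspace, the limiting cross-Gram matrix $\fXi_{11}^*=\big(\lim_N\langle\fv_s^{(1)},\fv_t^{(2)}\rangle\big)_{s,t\le r}$ is a diagonal matrix with $\pm1$ entries; after absorbing the signs (which do not affect $|\langle\cdot,\cdot\rangle|$ and can be fixed by an orientation choice of each $\fv_k^{(i)}$), we get $\fXi_{11}^*=\fI_r$. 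Consequently $\fXi_{11,-k}^*$, the matrix obtained by deleting the $k$th row and column, equals $\fI_{r-1}$.

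Next I would substitute $\fXi_{11,-k}^*=\fI_{r-1}$ into the quadratic form from Theorem~\ref{thm:test_tv}, so the limit becomes
\[
\fq_k^\mT
\begin{pmatrix}
\fI_{r-1} & -\fI_{r-1}\\
-\fI_{r-1} & \fI_{r-1}
\end{pmatrix}
\fq_k,
\]
where $\fq_k\sim N(\mathbf{0},\fD_k)$ and $\fD_k$ is the stated diagonal matrix. Write $\fq_k=(\fq_k^{(1)},\fq_k^{(2)})$ in its two blocks of length $r-1$, indexed by $j\in\{1,\ldots,r\}\setminus\{k\}$. The quadratic form equals $(\fq_k^{(1)}-\fq_k^{(2)})^\mT(\fq_k^{(1)}-\fq_k^{(2)})=\sum_{j\neq k}\big(\fq_k^{(1)}[j]-\fq_k^{(2)}[j]\big)^2$. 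Because $\fD_k$ is diagonal, the coordinates $\fq_k^{(1)}[j]$ and $\fq_k^{(2)}[j]$ are independent mean-zero normals with variances $\rho_1\om_{kj}^{(1)}$ and $\rho_2\om_{kj}^{(2)}$ respectively, and different $j$'s are mutually independent. Hence $\fq_k^{(1)}[j]-\fq_k^{(2)}[j]\sim N\big(0,\ \rho_1\om_{kj}^{(1)}+\rho_2\om_{kj}^{(2)}\big)$ independently over $j\neq k$, so writing $Z_j$ for i.i.d.\ standard normals gives $T_{vk}\wk\sum_{j\neq k,j=1}^r(\rho_1\om_{kj}^{(1)}+\rho_2\om_{kj}^{(2)})Z_j^2$, which is the claimed limit.

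Finally I would note the consistency of the plug-in estimators $\wh{\om_{kj}^{(i)}}$ (already asserted in Theorem~\ref{thm:test_tv}) together with Slutsky's theorem, so that a feasible version of the corollary holds as well; this is routine and needs no separate argument. The only point requiring a little care — and the place I would be most careful — is the sign/orientation issue in identifying $\fXi_{11}^*$ with $\fI_r$: one must note that eigenvectors are only defined up to sign, that the test statistic $T_{vk}$ depends only on absolute inner products, and that the limiting distribution in Theorem~\ref{thm:test_tv} is invariant under flipping the sign of any $\fv_k^{(i)}$ (which flips the corresponding row or column of $\fXi_{11}^*$ and also the sign of the corresponding coordinate of $\fq_k$, leaving the quadratic form unchanged). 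Given that invariance, choosing the orientations so that $\langle\fv_k^{(1)},\fv_k^{(2)}\rangle\to+1$ is legitimate and completes the reduction. Everything else is a direct substitution into Theorem~\ref{thm:test_tv}.
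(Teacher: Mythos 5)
Your proposal is correct and follows essentially the same route as the paper: substitute $\fXi_{11,-k}^*=\fI_{r-1}$ into the limiting quadratic form of Theorem~\ref{thm:test_tv} and use the independence of the two diagonal Gaussian blocks of $\fq_k$ to identify each coordinate combination as $N\big(0,\rho_1\om_{kj}^{(1)}+\rho_2\om_{kj}^{(2)}\big)$. (The paper's own proof writes the off-diagonal blocks as $+\fI_{r-1}$ and sums the two blocks rather than differencing them, but since $q_{kA}[j]+q_{kB}[j]$ and $q_{kA}[j]-q_{kB}[j]$ have the same law this is immaterial, and your explicit treatment of the eigenvector sign ambiguity is if anything more careful than the paper's.)
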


Theorem \ref{thm:test_tv} and Corollary \ref{cor:test_tvk} are proved in the supplementary material.

\section{Simulation Studies \label{sec:simulation}}

\subsection{Design}
We consider five population covariance matrices as follows:
\begin{eqnarray*}
\fSi_1 &=& \fV^{(1)} \fLa^{(1)} (\fV^{(1)})^T ,
~~~
\fSi_2 = \fV^{(1)} \fLa^{(2)} (\fV^{(1)})^T ,
~~~
\fSi_3 = \fV^{(2)} \fLa^{(1)} (\fV^{(2)})^T ,
\\
\fSi_4 &=& \fLa^{(1)},
~~~~~~~~~~~~~~~~~~
\fSi_5 = \fV^{(5)} \fLa^{(1)} (\fV^{(5)})^T ,
\end{eqnarray*}
where $\fV^{(1)}$, $\fV^{(2)}$ are two random orthogonal matrices, and
\begin{eqnarray*}
\fLa^{(1)} &=& \diag(5N/2, N, N/2, \lam_4^{(1)}, \cdots, \lam_N^{(1)}),
~~~ \lam_j^{(1)} \iid \mbox{Unif}(1,3), \\
\fLa^{(2)} &=& \diag(7N/2, 2N, N, \lam_4^{(2)}, \cdots, \lam_N^{(2)}),
~~~~ \lam_j^{(2)} \iid \mbox{Unif}(2,5), \\
\fV^{(5)} &=& (\fv_1^{(5)}, \fv_2^{(5)}, \fe_3, \cdots, \fe_N), \\
\fv_1^{(5)} &=& (\cos\theta, \sin\theta, 0, \cdots, 0)^T, \\
\fv_2^{(5)} &=& (-\sin\theta, \cos\theta, 0, \cdots, 0)^T,
~~~~ \theta \in [0,\pi/2].
\end{eqnarray*}
The observations will be simulated as the following: for a given $\fSi$, which will be one of the five covariance matrices above, write its  spectral decomposition as $\fSi= \fV \fLa \fV^T$. Then we simulate observations with covariance matrix $\fSi$ by $\fV \fLa^{1/2}  \fz_t$, where
 $\fz_t = (\fz_t[1], \fz_t[2], \ldots, \fz_t[N] )^\mT$ consists of $\hbox{i.i.d.}$ standardized student $t(8)$ random variables.

Theorems \ref{thm:test_tlam}, \ref{thm:test_te} and \ref{thm:test_tv} are associated with different null hypotheses.  When evaluating the sizes of the tests proposed in these theorems,  we adopt the following setting:
\begin{itemize}
  \item For both Theorems \ref{thm:test_tlam} and \ref{thm:test_te}, we simulate two samples of observations with $\fSi_1$ and $\fSi_3$ as their respective population covariance matrices. Note that  $\fSi_1$ and $\fSi_3$ share the same eigenvalues but have different eigenvectors.
  \item For Theorem \ref{thm:test_tv}, the two samples of observations are simulated with $\fSi_1$ and $\fSi_2$ as their respective population covariance matrices. The two matrices share the same eigenvectors but have different eigenvalues.
\end{itemize}

On the other hand, when evaluating powers, we use the following design:
\begin{itemize}
  \item For testing equality of eigenvalues/eigenvalue ratios, the two samples of observations are simulated with $\fSi_1$ and $\fSi_2$ as their respective population covariance matrices;
\item For testing equality of principal eigenvectors,  we simulate two samples of observations with $\fSi_4$ and $\fSi_5$ as their respective population covariance matrices. The difference between the principal eigenvectors of the two matrices is a function of the angle $\theta$. We will change the value of $\theta$ to see how the power varies as a function of~$\theta$.
\end{itemize}

\subsection{Visual check}
We firstly visually examine Theorems \ref{thm:test_tlam}, \ref{thm:test_te} and \ref{thm:test_tv} by comparing the empirical distributions of the test statistics with their respective asymptotic distributions under the null hypotheses.

For Theorem \ref{thm:test_tlam}, the asymptotic distribution of the test statistic $T_{\lam 1}$ is the standard normal distribution. This is clearly supported by Figure \ref{fig:dist_value_t8}, which give the normal Q-Q plot and histogram of $T_{\lam 1}$ based on 5,000 replications.

\begin{figure}[htbp]
\centering	
\includegraphics[height=6cm]{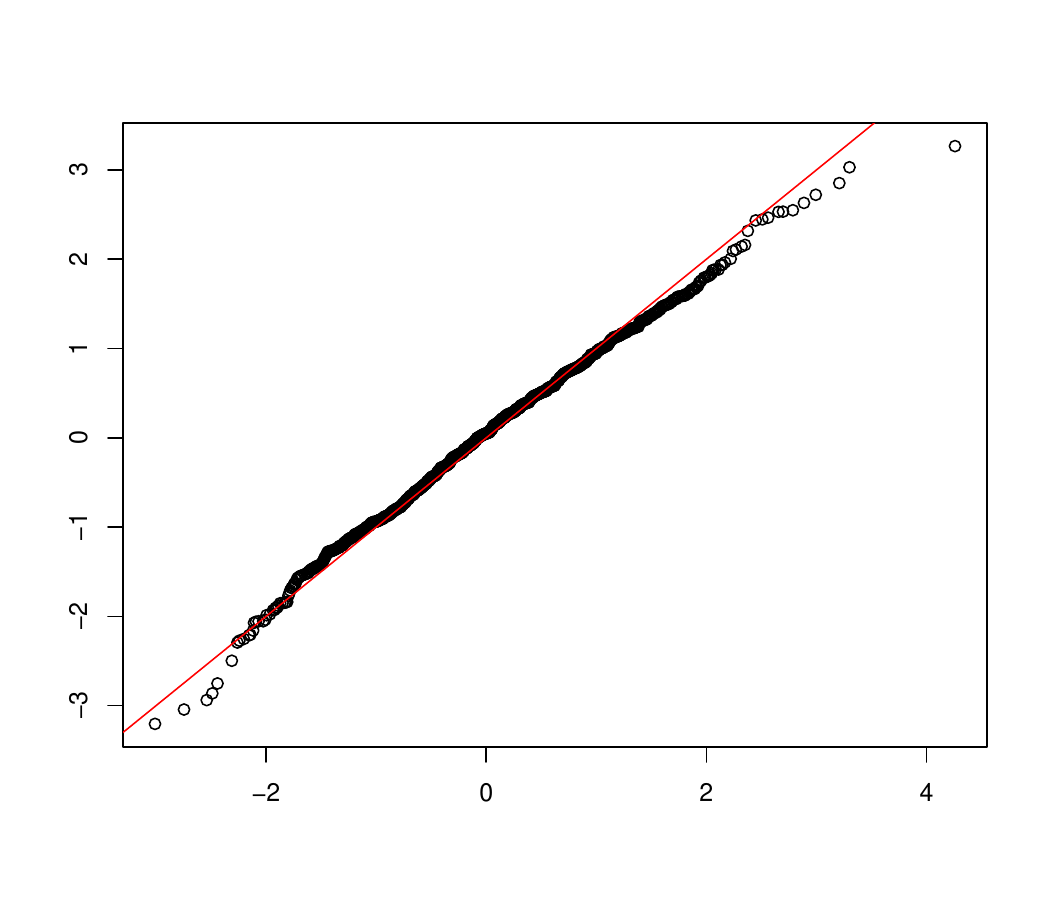}
\includegraphics[height=6cm]{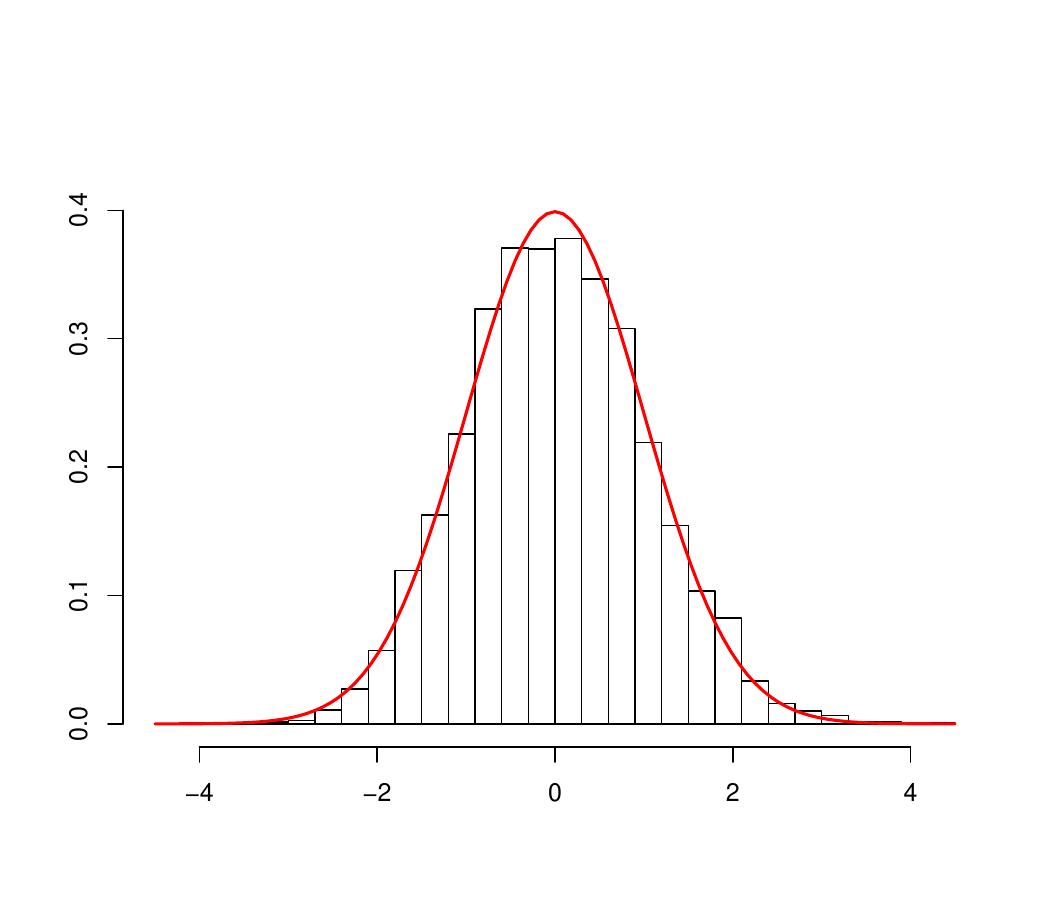}
\caption{Normal Q-Q plot and histogram of $T_{\lam 1}$ based on 5,000 replications with $N=500, T_1=500$ and $T_2=750$.  }\label{fig:dist_value_t8}
\end{figure}

For Theorem \ref{thm:test_te}, the asymptotic distribution of the test statistic $T_{e1}$ is again the standard normal distribution. This is supported by Figure \ref{fig:dist_ratio_t8}.

\begin{figure}[htbp]
\centering	
\includegraphics[height=6cm]{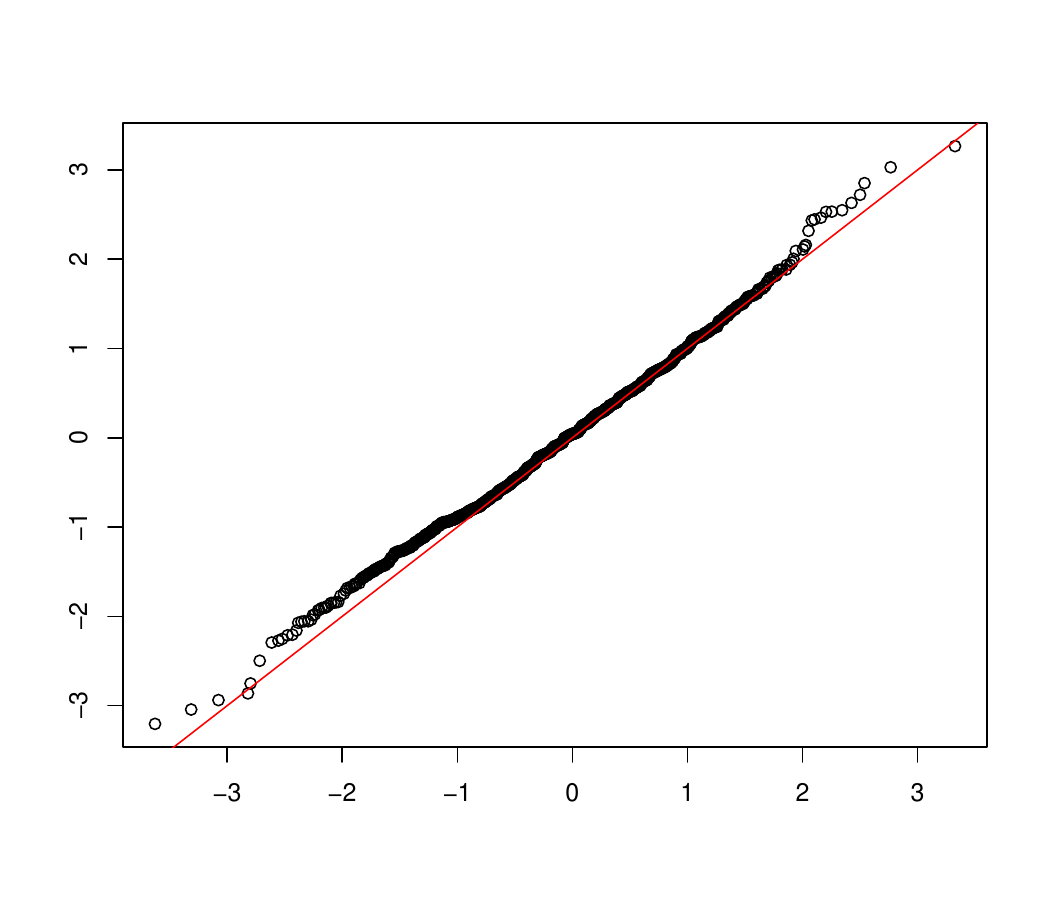}
\includegraphics[height=6cm]{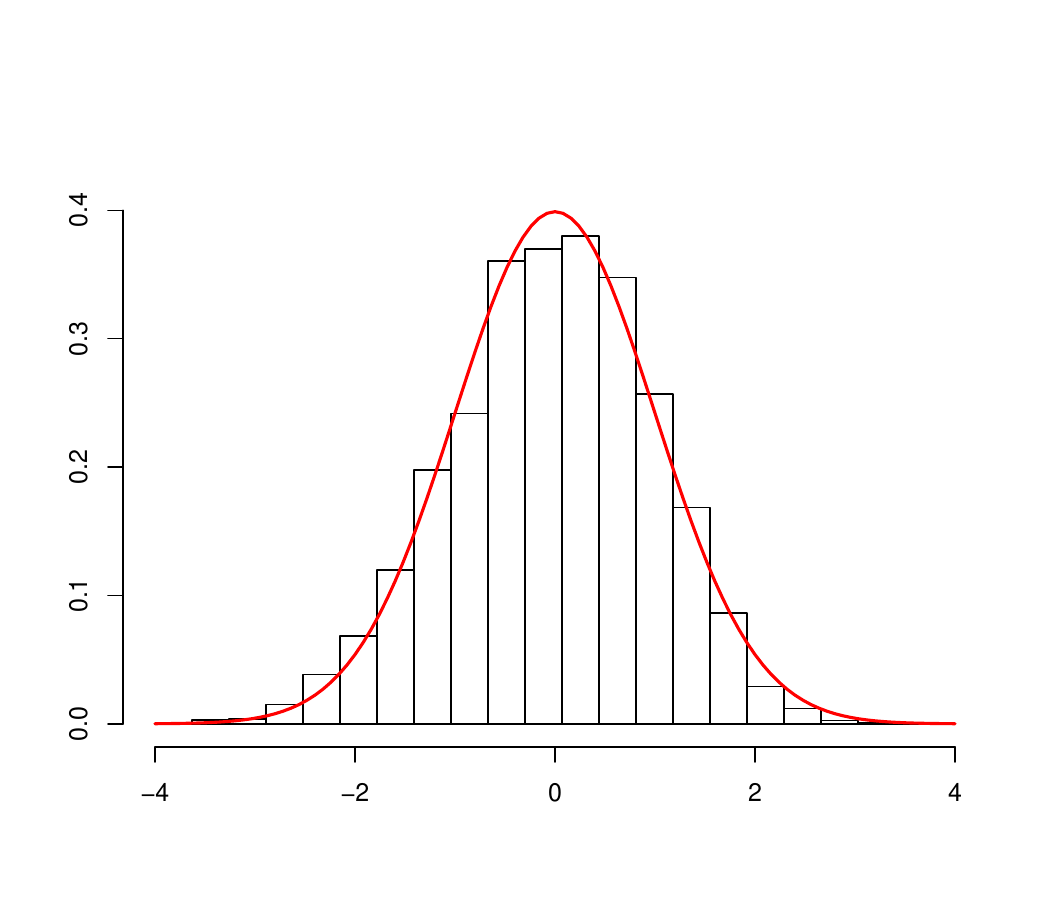}
\caption{Normal Q-Q plot and histogram of $T_{e1}$ based on 5,000 replications with $N=500, T_1=500$ and $T_2=750$. }\label{fig:dist_ratio_t8}
\end{figure}

For Theorem \ref{thm:test_tv}, the asymptotic distribution of the test statistic $T_{v 1}$ is a generalized $\chi^2$-distribution, which does not have an explicit density formula. To examine the asymptotics, we compare the empirical distribution of the test statistic $T_{v 1}$ with that of  Monte-Carlo samples from the asymptotic distribution. The comparison is conducted via both Q-Q plot and  density estimation. The results are given in  Figure~\ref{fig:dist_vector_t8}. We can see that the empirical distribution of the test statistic $T_{v 1}$ match well with the asymptotic distribution.

\begin{figure}[htbp]
\centering	
\includegraphics[height=6cm]{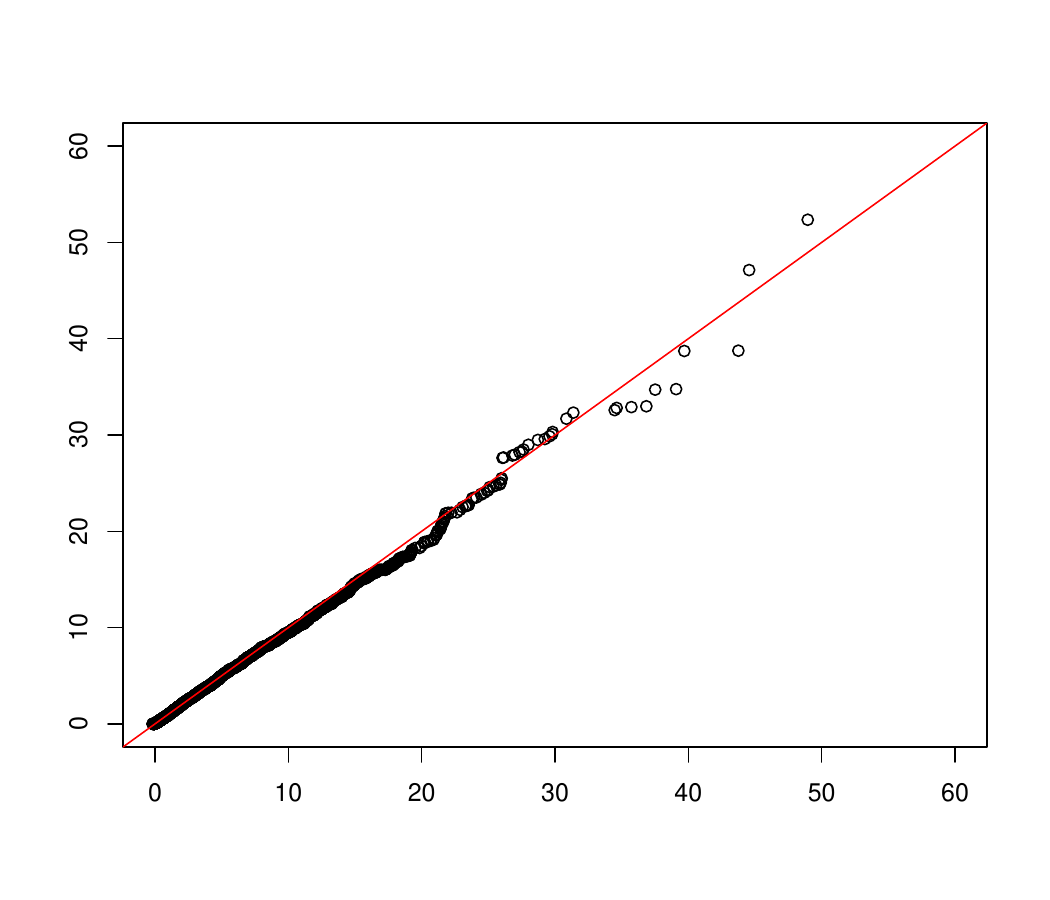}
\includegraphics[height=6cm]{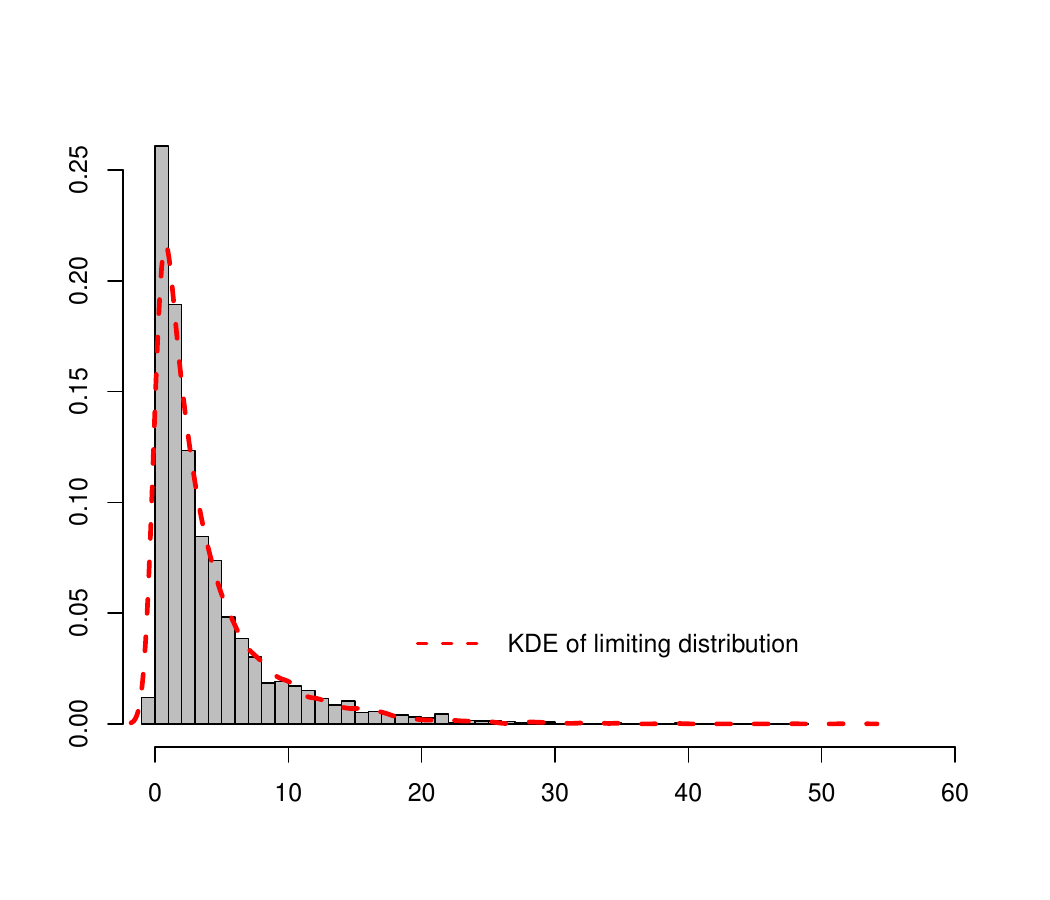}
\caption{Comparisons of the empirical distribution of the test statistic $T_{v 1}$ with the asymptotic distribution when $N=500, T_1=500$ and $T_2=750$. Left: Q-Q plot of $T_{v 1}$ versus Monte-Carlo samples from the asymptotic distribution; right: histogram of $T_{v 1}$ versus the kernel density estimate of the asymptotic distribution.  }\label{fig:dist_vector_t8}
\end{figure}

\subsection{Size and power evaluation}
In this subsection, we evaluate the sizes and powers of the three tests in Theorems~\ref{thm:test_tlam}, \ref{thm:test_te} and \ref{thm:test_tv}.

Table \ref{table:size_t8} reports the empirical sizes of the three tests based on $T_{\lam k}, T_{e k}$ and $T_{v k}$, $k=1,2,3,$ at 5\% significance level for different combinations of $N$, $T_1$ and $T_2$. Tests based on $T_{e k }$ and $T_{v k}$, $k=1,2,3,$ involve the number of factors, which is unknown in practice. There are several estimators available, including those given in \cite{BN02} and \cite{AH13}.
We evaluate the sizes based on a given estimated number of factors, specified by $\wh{r}$ in the table.
 We see that for the first two sets of tests, for different estimated number of factors and  different $N$ and $T_1, T_2$, the empirical sizes are close to the nominal level of 5\%. For the third set of tests based on $T_{v i}$, $i=1,2,3,$, the size approaches 5\% as the dimension $N$ and samples sizes $T_1, T_2$ get larger.

\begin{table}[H]
\small
\centering
\begin{tabular}{c c c c c c c c c c}
\hline
\multirow{2}{*}{$N$} & \multirow{2}{*}{$T_{\lam 1}$} &&  \multicolumn{3}{c}{$T_{e1}$} & & \multicolumn{3}{c}{$T_{v1}$} \\
\cline{4-6} \cline{8-10}
& & & $\wh{r}=2$ & $\wh{r}=3$ (true) & $\wh{r}=4$ & &  $\wh{r}=2$ & $\wh{r}=3$ (true) & $\wh{r}=4$ \\
\hline
100 & $0.052\ $ && 0.052 & 0.050 & 0.050 & & 0.083 & 0.086 & 0.086 \\
%\hline
300 & $0.052\ $ &&  0.051 & 0.049 & 0.049 & & 0.060 & 0.063 & 0.063 \\
%\hline
500 & $0.053\ $ && 0.053 & 0.051 & 0.051 & & 0.051 & 0.055 & 0.055 \\
\hline
\end{tabular}

\begin{tabular}{c c c c c c c c c c}
\hline
\multirow{2}{*}{$N$} & \multirow{2}{*}{$T_{\lam 2}$} &&  \multicolumn{3}{c}{$T_{e2}$} & & \multicolumn{3}{c}{$T_{v2}$} \\
\cline{4-6} \cline{8-10}
& & & $\wh{r}=2$ & $\wh{r}=3$ (true) & $\wh{r}=4$ & &  $\wh{r}=2$ & $\wh{r}=3$ (true) & $\wh{r}=4$ \\
\hline
100 & $0.057\ $ && 0.048 & 0.047 & 0.047 & & 0.102 & 0.108 & 0.109 \\
%\hline
300 & $0.053\ $ &&  0.055 & 0.054 & 0.054 & & 0.057 & 0.063 & 0.063\\
%\hline
500 & $0.056\ $ && 0.053 & 0.052 & 0.052 & & 0.048 & 0.055 & 0.055 \\
\hline
\end{tabular}

\begin{tabular}{c c c c c c c c c c}
\hline
\multirow{2}{*}{$N$} & \multirow{2}{*}{$T_{\lam 3}$} &&  \multicolumn{3}{c}{$T_{e3}$} & & \multicolumn{3}{c}{$T_{v3}$} \\
\cline{4-6} \cline{8-10}
& & & $\wh{r}=2$ & $\wh{r}=3$ (true) & $\wh{r}=4$ & &  $\wh{r}=2$ & $\wh{r}=3$ (true) & $\wh{r}=4$ \\
\hline
100 & $0.065\ $ && NA & 0.049 & 0.049 & & NA & 0.096 & 0.099 \\
%\hline
300 & $0.052\ $ &&  NA & 0.052 & 0.052 & & NA & 0.058 & 0.059 \\
%\hline
500 & $0.062\ $ && NA & 0.055 & 0.055 & & NA & 0.057 & 0.057 \\
\hline
\end{tabular}
\caption{ Empirical sizes based on 5,000 replications of $T_{\lam k}, T_{e k}$ and $T_{v k}$, $k=1,2,3,$ at 5\% significance level with $N/T_1=1$ and $N/T_2=2/3$.}
\label{table:size_t8}
\end{table}

Power evaluation results are given in Table \ref{table:power_value_t8}. We see that the powers are in general quite high especially as the dimension $N$ and samples sizes $T_1, T_2$ all get larger.
\begin{table}[H]
\small
\centering
\begin{tabular}{c c c c c c c c c c}
\hline
\multirow{2}{*}{$N$} & \multirow{2}{*}{$T_{\lam 1}$} &&  \multicolumn{3}{c}{$T_{e1}$} & & \multicolumn{3}{c}{$T_{v1}$} \\
\cline{4-6} \cline{8-10}
& & & $\wh{r}=2$ & $\wh{r}=3$ (true) & $\wh{r}=4$ & &  $\wh{r}=2$ & $\wh{r}=3$ (true) & $\wh{r}=4$ \\
\hline
100 & $0.196\ $ && 0.146 & 0.138 & 0.138 && 0.494 & 0.508 & 0.509\\
%\hline
300 & $0.617\ $ && 0.457 & 0.450 &  0.450 && 0.909 & 0.913 & 0.914\\
%\hline
500 & $0.842\ $ && 0.705 & 0.697 &  0.697 && 0.987 & 0.988 & 0.988\\
\hline
\end{tabular}

\begin{tabular}{c c c c c c c c c c}
\hline
\multirow{2}{*}{$N$} & \multirow{2}{*}{$T_{\lam 2}$} &&  \multicolumn{3}{c}{$T_{e2}$} & & \multicolumn{3}{c}{$T_{v2}$} \\
\cline{4-6} \cline{8-10}
& & & $\wh{r}=2$ & $\wh{r}=3$ (true) & $\wh{r}=4$ & &  $\wh{r}=2$ & $\wh{r}=3$ (true) & $\wh{r}=4$ \\
\hline
100 & $0.712\ $ && 0.160 & 0.158 & 0.158 && 0.407 & 0.428 &  0.430\\
%\hline
300 & $0.989\ $ && 0.360 & 0.355 &  0.355 && 0.833 & 0.844 & 0.844\\
%\hline
500 & $0.996\ $ && 0.522& 0.520 &  0.520 && 0.970 & 0.974 & 0.974\\
\hline
\end{tabular}

\begin{tabular}{c c c c c c c c c c}
\hline
\multirow{2}{*}{$N$} & \multirow{2}{*}{$T_{\lam 3}$} &&  \multicolumn{3}{c}{$T_{e3}$} & & \multicolumn{3}{c}{$T_{v3}$} \\
\cline{4-6} \cline{8-10}
& & & $\wh{r}=2$ & $\wh{r}=3$ (true) & $\wh{r}=4$ & &  $\wh{r}=2$ & $\wh{r}=3$ (true) & $\wh{r}=4$ \\
\hline
100 & $0.705\ $ && NA & 0.145 & 0.145 && NA & NA & NA\\
%\hline
300 & $0.990\ $ && NA & 0.303 &  0.303&& NA & NA & NA\\
%\hline
500 & $1\ $ && NA & 0.446 &  0.446&& NA & NA & NA\\
\hline
\end{tabular}
\caption{ Empirical powers based on 5,000 replications of $T_{\lam k}, T_{e k}$ and $T_{v k}$ (for $\theta=\pi/9$), $k=1,2,3,$ at 5\% significance level with $N/T_1=1$ and $N/T_2=2/3$.}
\label{table:power_value_t8}
\end{table}

Finally, in Figure \ref{fig:power_Tv_theta}, we evaluate the power of the eigenvector test $T_{v k}, k=1,2$ as a function of~$\theta$. For the three $\theta$ values tested, $i\pi/9, i =1,2,3$, the bigger the value, the bigger the difference between the principal eigenvectors in the two populations, and the higher the power. Moreover, even for the smallest value $\pi/9$, the power quickly increases to close to 1 as the dimension $N$ and samples sizes $T_1, T_2$  get larger.

\begin{figure}[htbp]
\centering	
\includegraphics[width=0.49 \textwidth]{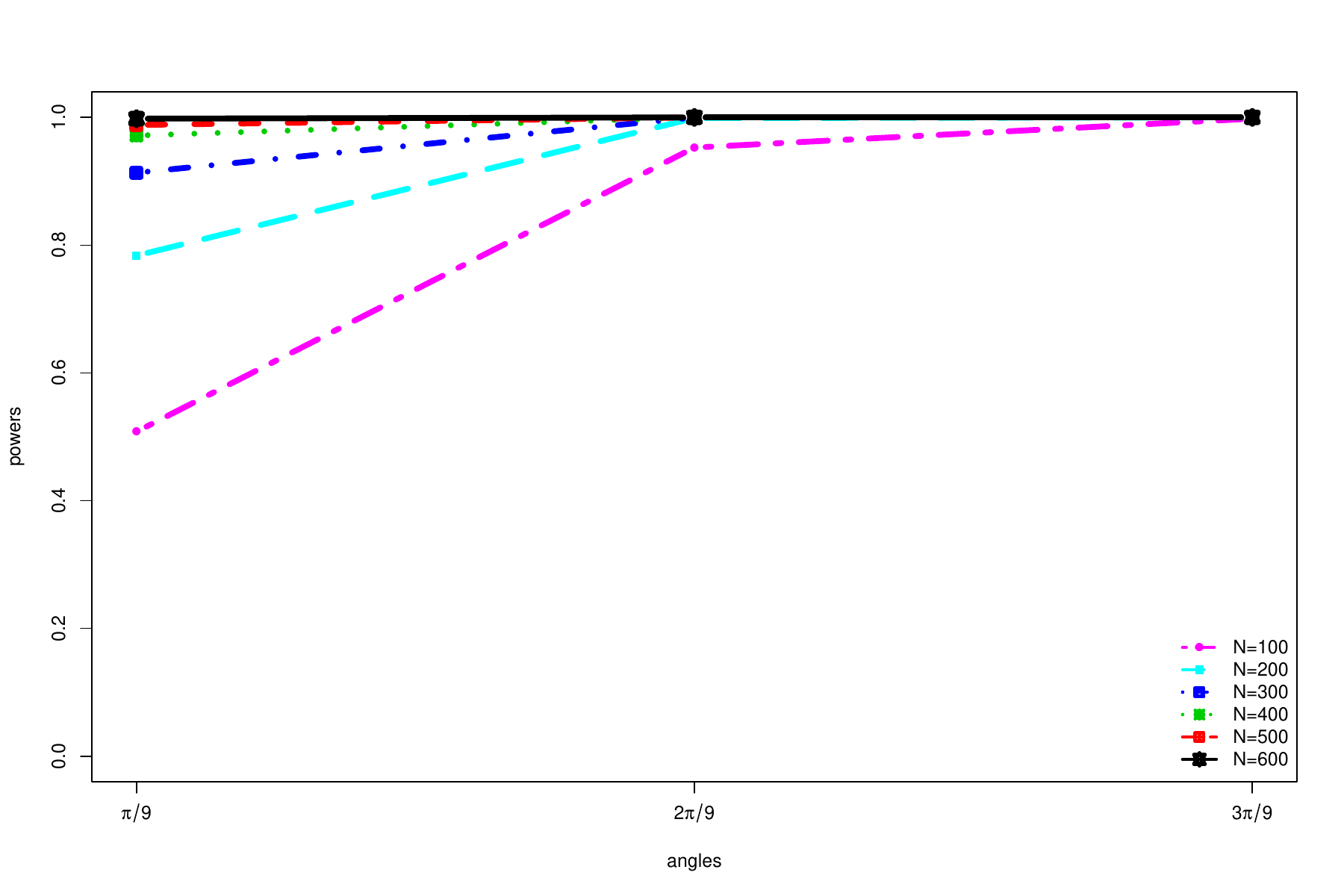}
\includegraphics[width=0.49 \textwidth]{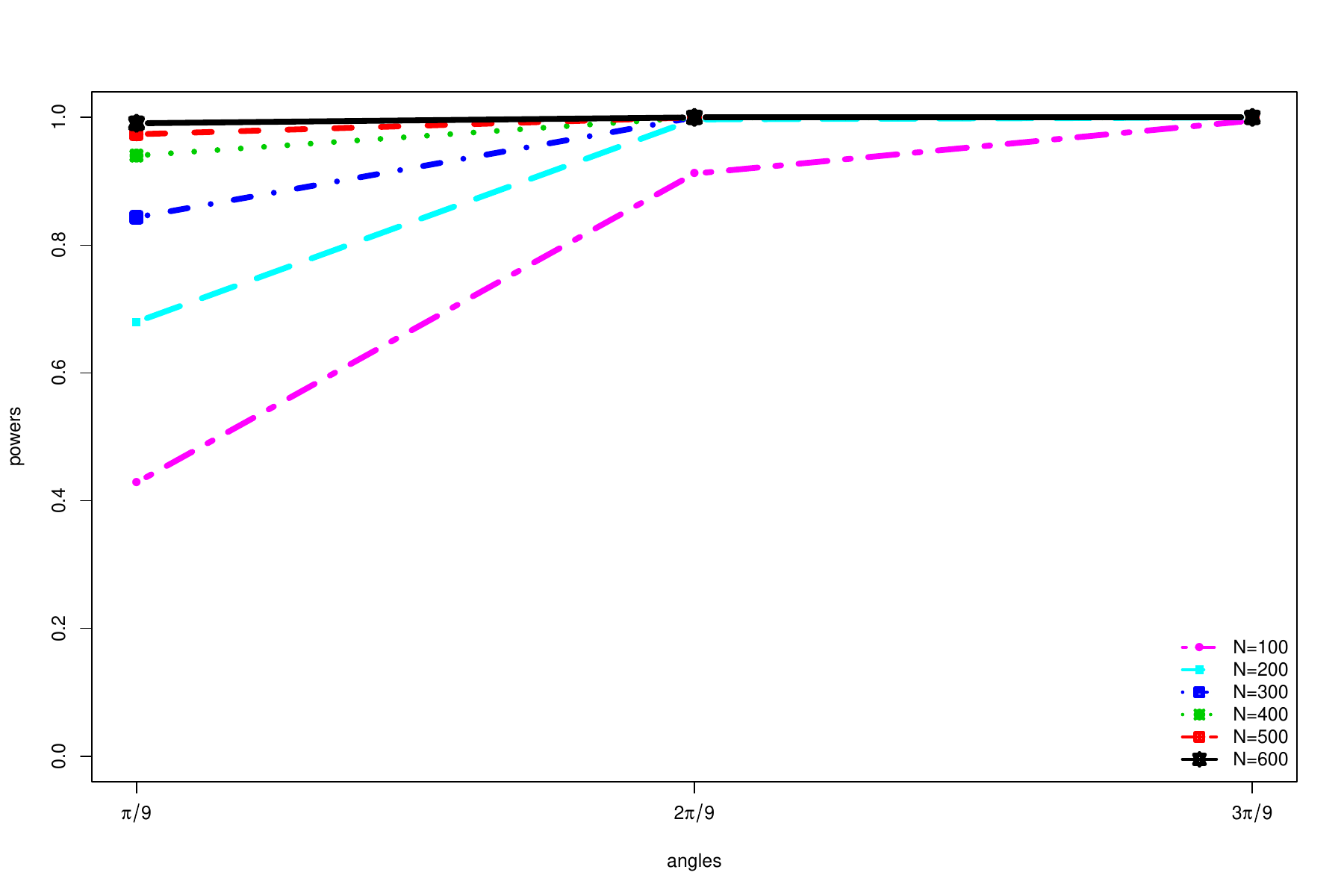}
\caption{Empirical power of $T_{v_i}, i=1,2,$ as a function of $\theta$ at 5\% significance level for different  $N$ and $T_1, T_2$ with $N/T_1=1$ and $N/T_2=2/3$. Left: powers for $T_{v_1}$; right: powers for $T_{v_2}$.}
\label{fig:power_Tv_theta}
\end{figure}

%-------------------Data---------------------%

\section{Empirical Studies \label{sec:emp}}

In this section, we conduct empirical studies based on daily returns of S\&P500 Index constituents from January 2000 to December 2020. The objective is to test, between two consecutive years, whether the principal  eigenvalues, eigenvalue ratios and principal eigenvectors are equal or not.

\subsection{Tests about principal eigenvalues}
We plot in Figure \ref{fig:test_value_byyear} the values of the test statistic, $T_{\lambda k}, k =1,2,3,$ together with the critical values at 5\% significance level based on Theorem \ref{thm:test_tlam}.

%-----------------------------------------%
\begin{figure}[htbp]
\begin{center}
\centering
\includegraphics[width=0.6\textwidth]{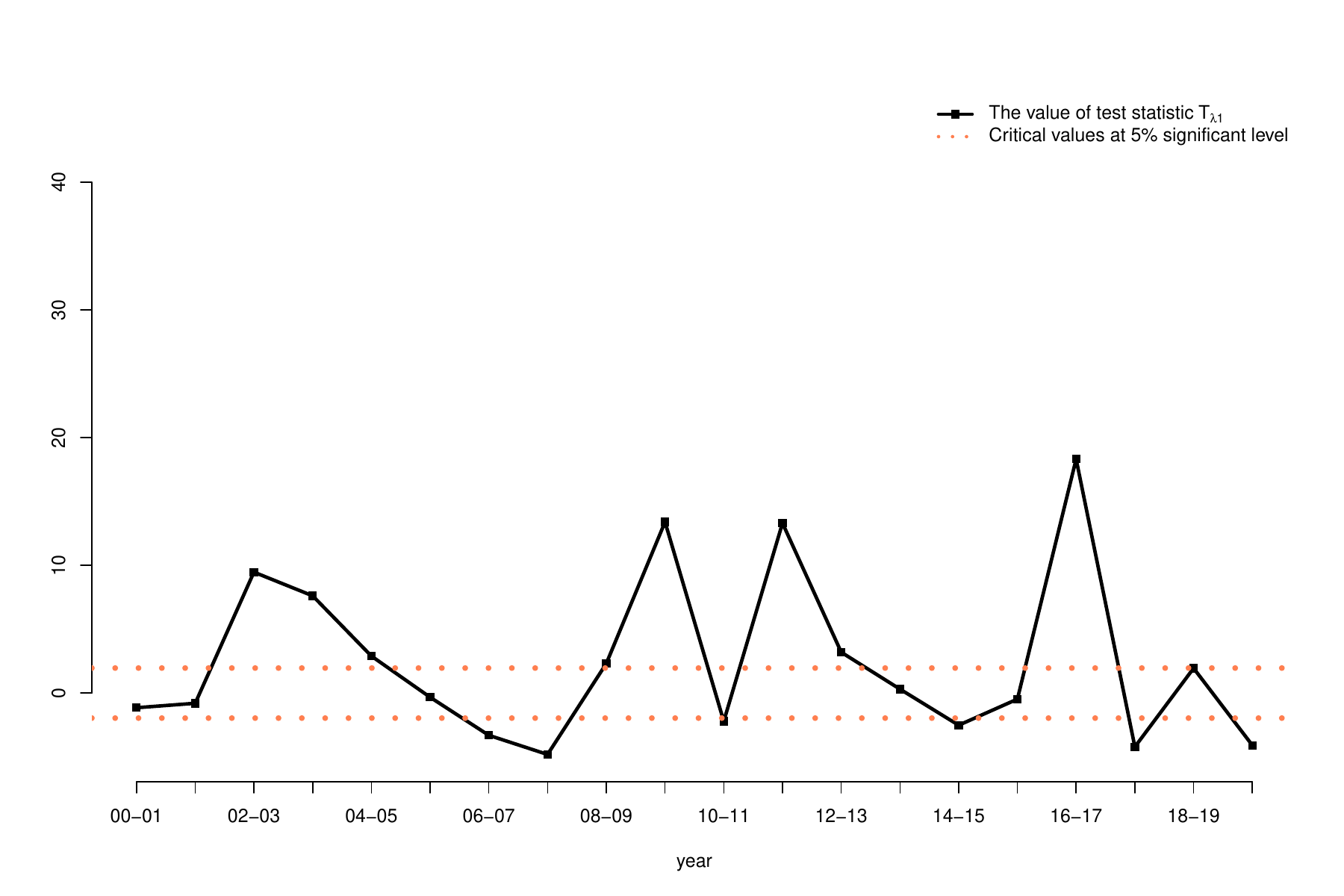}\\
\includegraphics[width=0.6\textwidth]{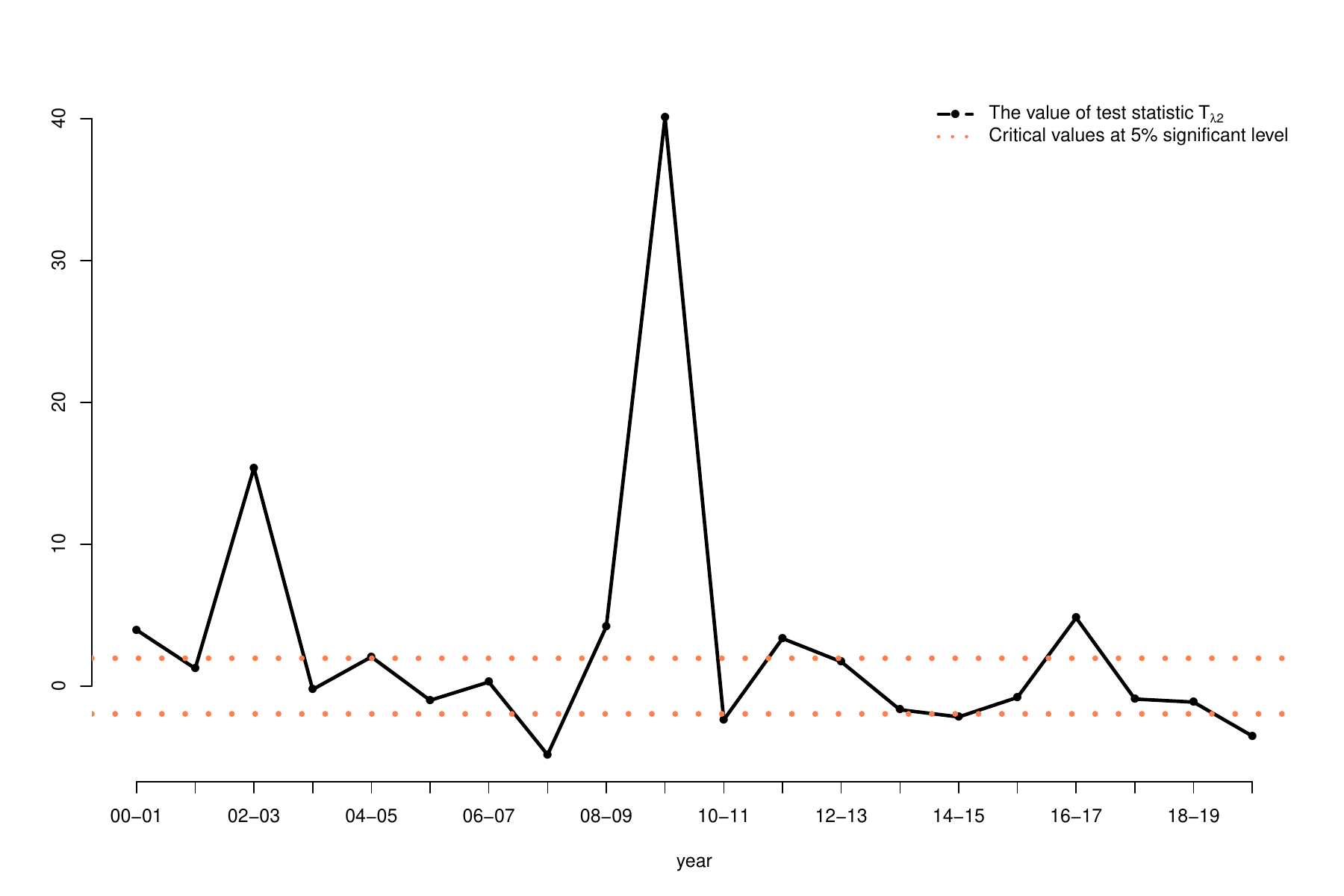}\\
\includegraphics[width=0.6\textwidth]{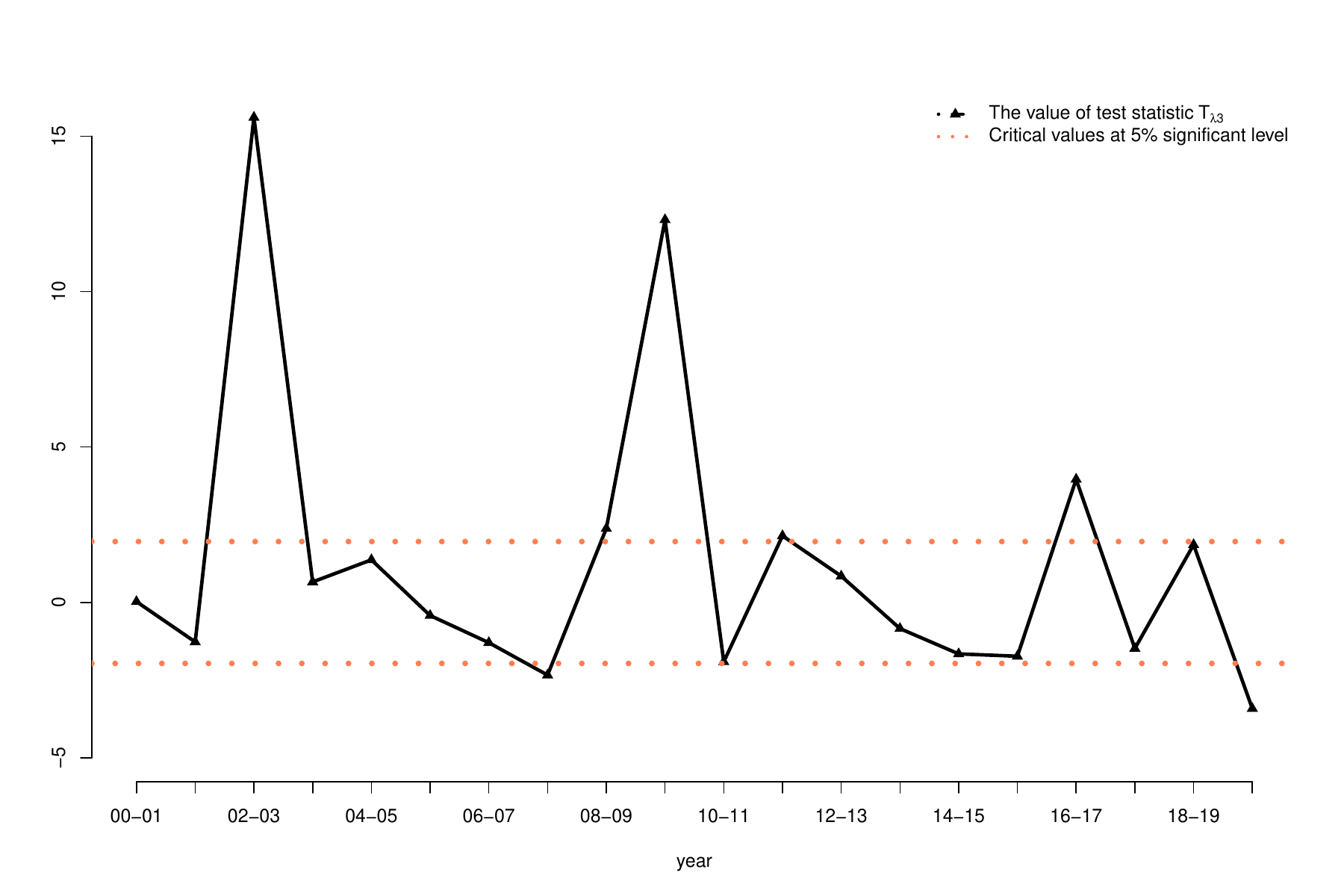}
\caption{Results of testing for equality of the first three principal eigenvalues between two consecutive years during 2000-2020. From top to bottom: testing equality of the first  principal eigenvalue, the second  principal eigenvalue, and the third  principal eigenvalue, respectively.  }
\label{fig:test_value_byyear}
\end{center}
\end{figure}
%-----------------------------------------%

We see from Figure \ref{fig:test_value_byyear} that for testing equality of the first principal eigenvalue, the test result is statistically significant for more than half of two consecutive years, suggesting that the first principal eigenvalue tends to change over time. The second and third  principal eigenvalues seem a bit more stable.

\subsection{Tests on  eigenvalues ratios}

We plot in Figure \ref{fig:test_ratio_byyear} the  results  of testing equality of eigenvalue ratios.
%-----------------------------------------%
\begin{figure}[htbp]
\begin{center}
\centering
\includegraphics
[width=0.6\textwidth]{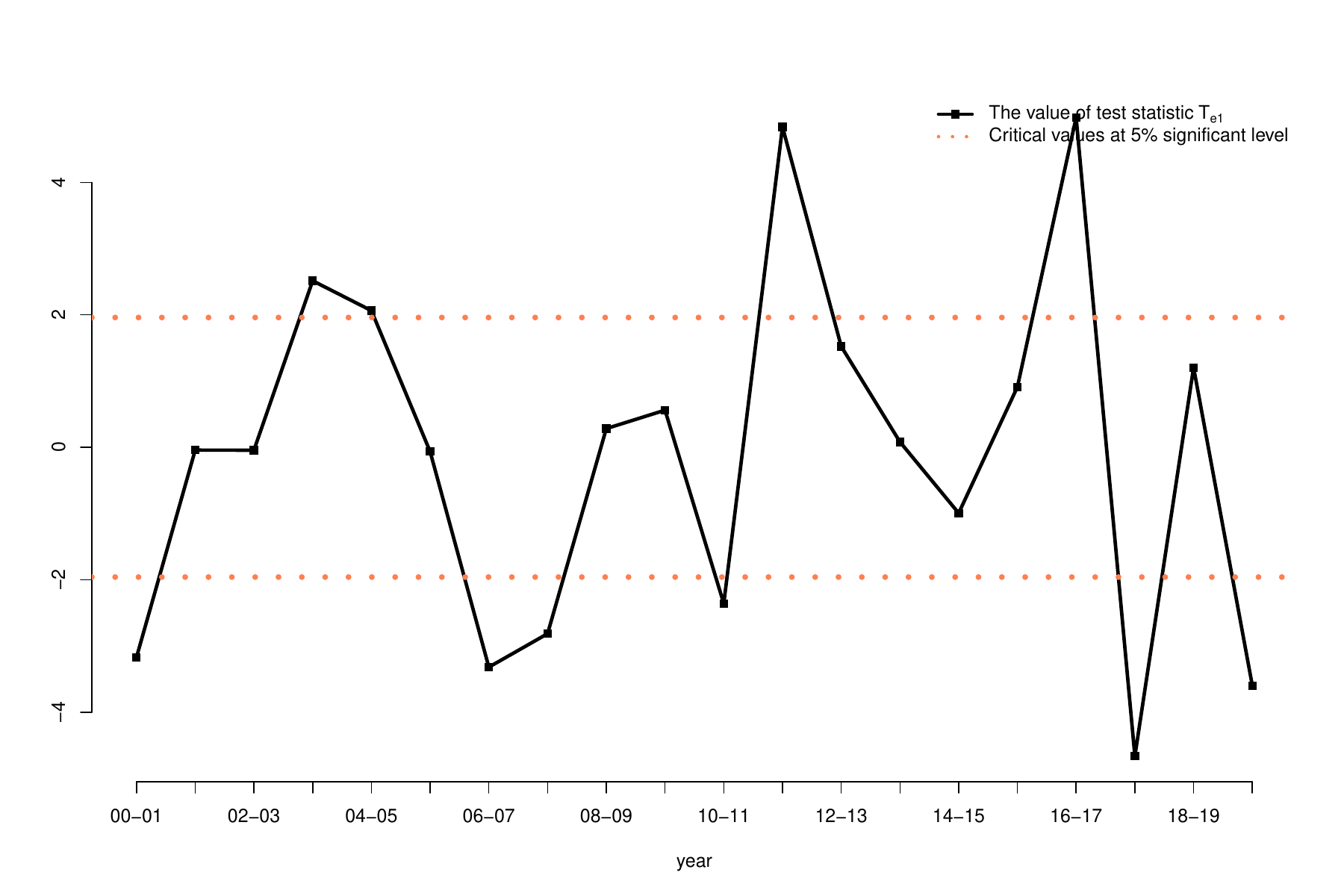}
\includegraphics
[width=0.6\textwidth]{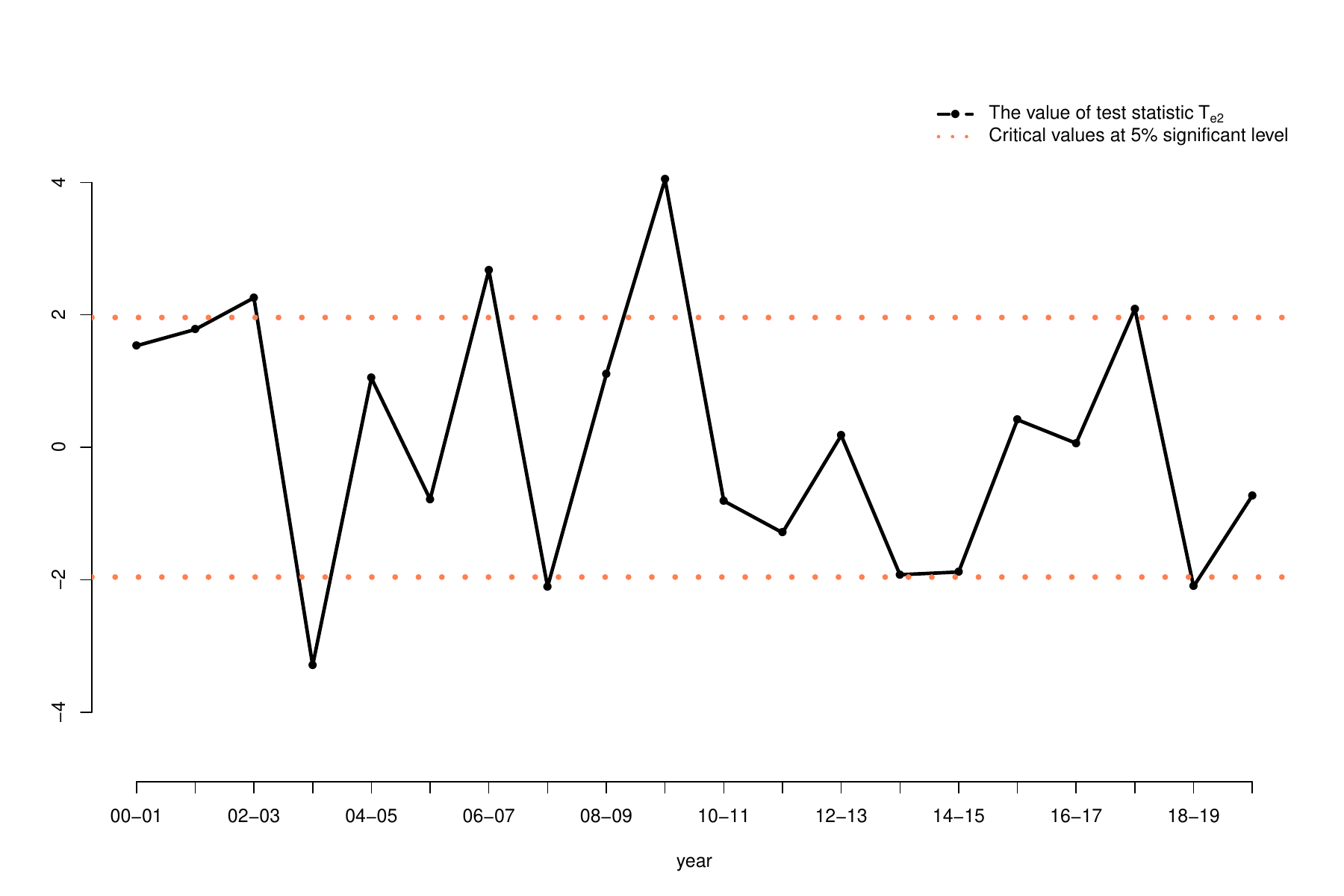}
\includegraphics
[width=0.6\textwidth]{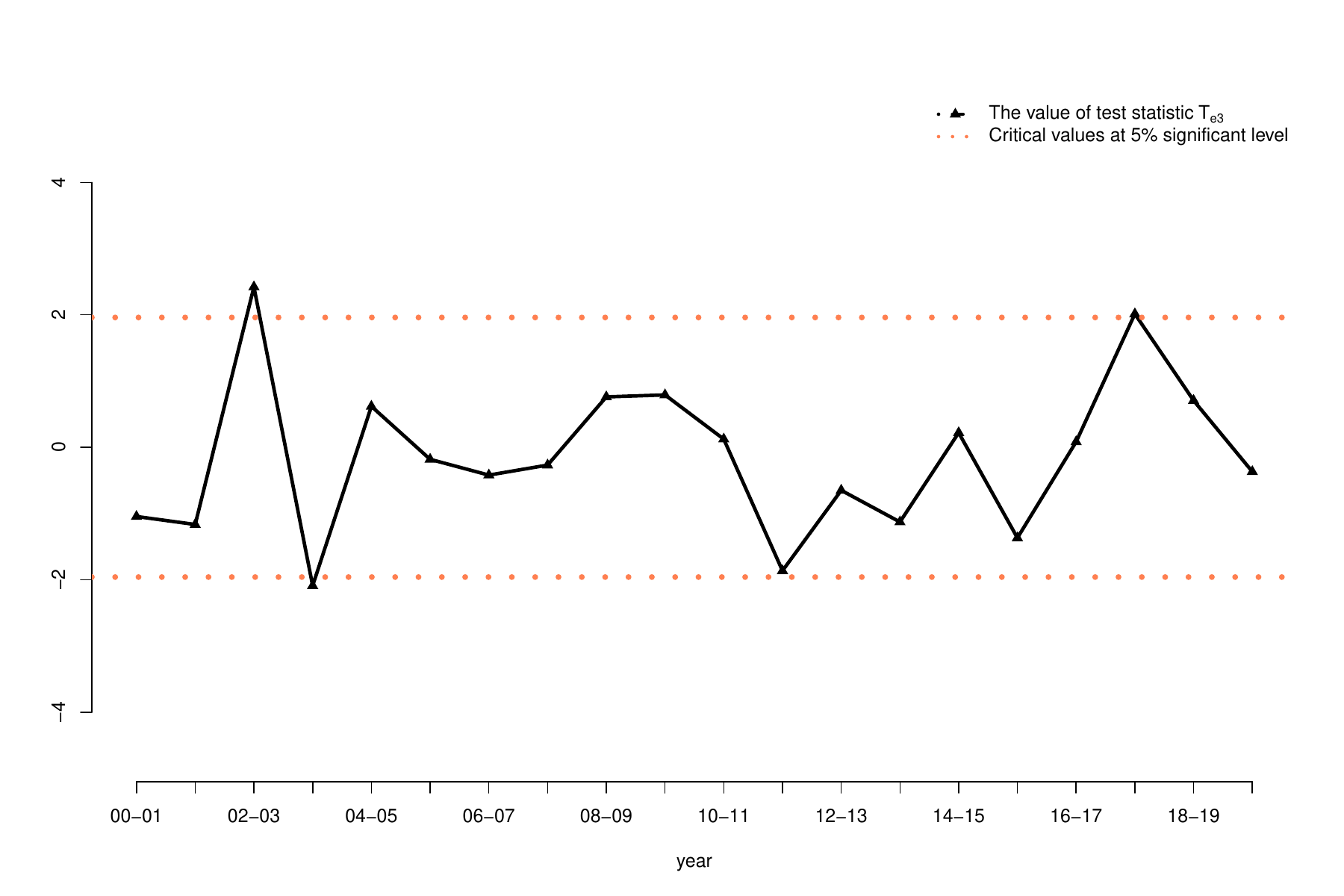}
\caption{Results of testing for equality of the first three principal eigenvalue ratios between two consecutive years during 2000-2020. From top to bottom: testing equality of the first  principal eigenvalue ratio, the second  principal eigenvalue ratio, and the third  principal eigenvalue ratio, respectively.}
\label{fig:test_ratio_byyear}
\end{center}
\end{figure}

An interesting observation is that, in sharp contrast with the tests about eigenvalues, for testing equality of eigenvalue ratios, the rejection rate  is much lower.
Such contrast suggests an interesting difference between the absolute sizes of principal eigenvalues and their relative sizes: while the absolute size appears to change frequently over time, the relative size is more stable.

%\r{Fan Q:  This can be due to the volatility varies year over year or more generally time varying total volatility}

%-----------------------------------------%
\subsection{Tests about principal eigenvectors}
%-----------------------------------------%

Figure \ref{fig:test_vector_byyear} reports the results of
tests about principal eigenvalues.

\begin{figure}[htbp]
\begin{center}
\centering
\includegraphics%[scale=0.28][width=13cm]
[width=0.6\textwidth]{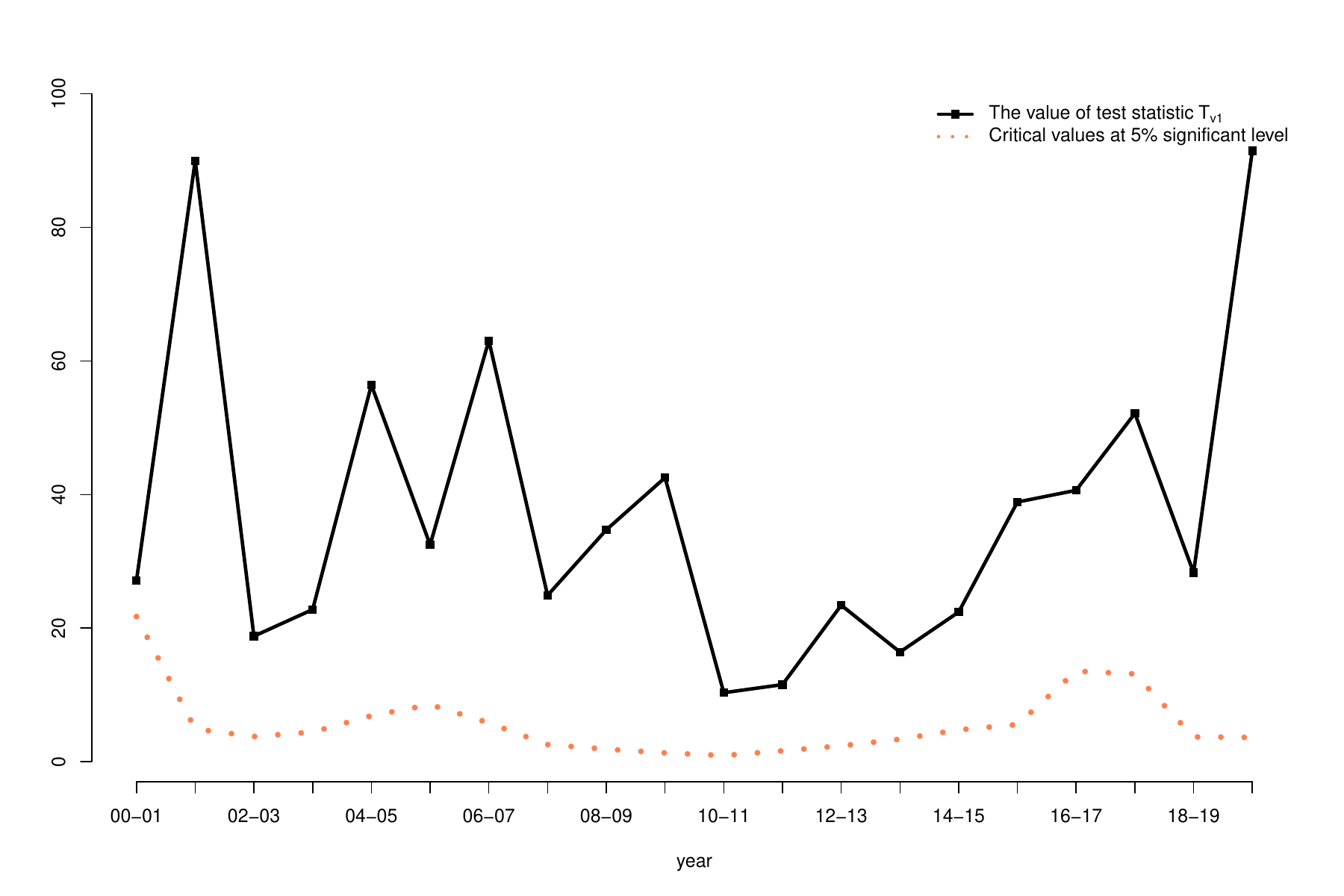}
\includegraphics
[width=0.6\textwidth]{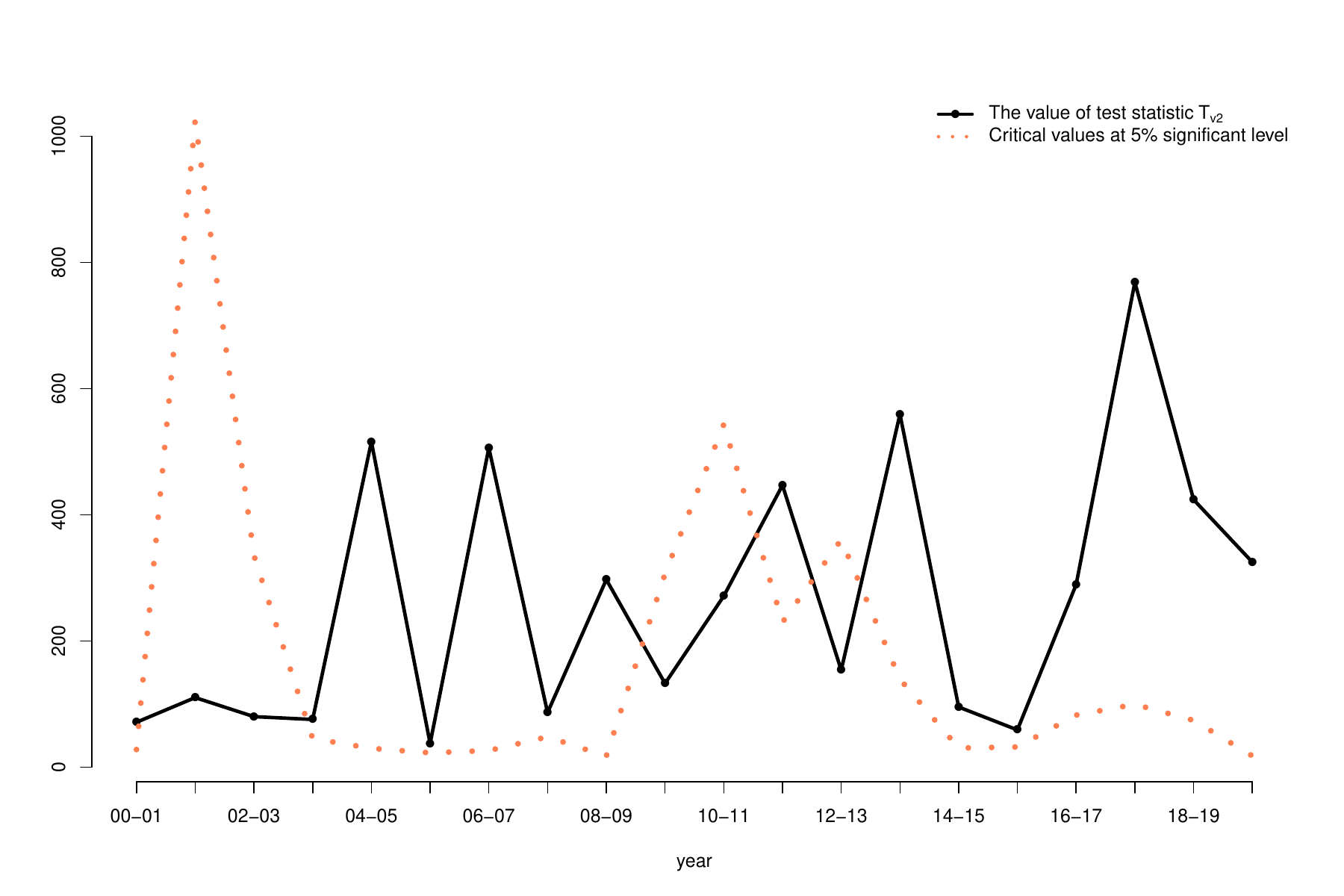}
\includegraphics
[width=0.6\textwidth]{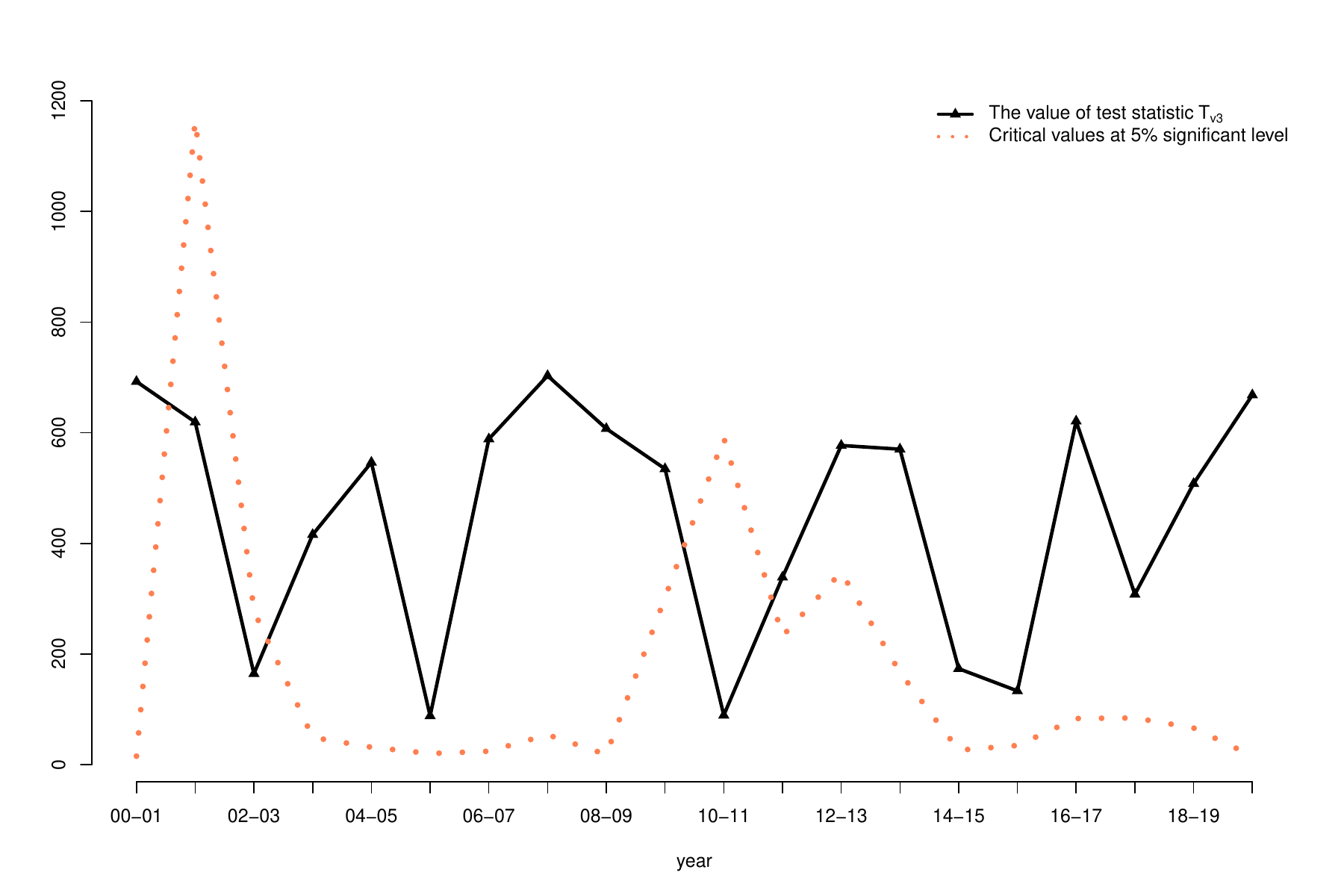}
\caption{Results of testing for equality of the first three principal eigenvectors between two consecutive years during 2000-2020. From top to bottom: testing equality of the first  principal eigenvector, the second  principal eigenvector, and the third  principal eigenvector, respectively.
}
\label{fig:test_vector_byyear}
\end{center}
\end{figure}

Notice that in this case, the asymptotic distribution under the null hypothesis is a complicated generalized $\chi^2$ distribution. There is no explicit formula for computing the critical value. To solve this issue, we simulate a large number of observations from the limiting distribution, based on which we estimate the 95\% quantile. That leads to the red dotted curve in the plots. Note that the critical values change over time. The reason is that the limiting distribution involves both population principal eigenvalues and eigenvectors, which are subject to change over time.
The black curves report test statistic values.

For the test about the first principal eigenvector, we see that for \emph{all} pairs of consecutive years, the value of the test statistic is  well above the 95\% quantile, so we should reject the null hypothesis that the first principal eigenvector is the same between  two consecutive years.
For the tests about the second and third principal eigenvector, we also reject the corresponding null hypothesis for most of  the pairs of consecutive years. These findings have a significant implication on factor modeling. In particular, the results show that structural breaks due to principal eigenvectors  occur more often than what one would have guessed based on stock market condition changes.

\subsection{Summary of the three test results}
Figure \ref{fig:test_123ratio_byyear} summarizes the results of the three tests.

\begin{figure}[htbp] 	
\centering	
\includegraphics[width=15cm]{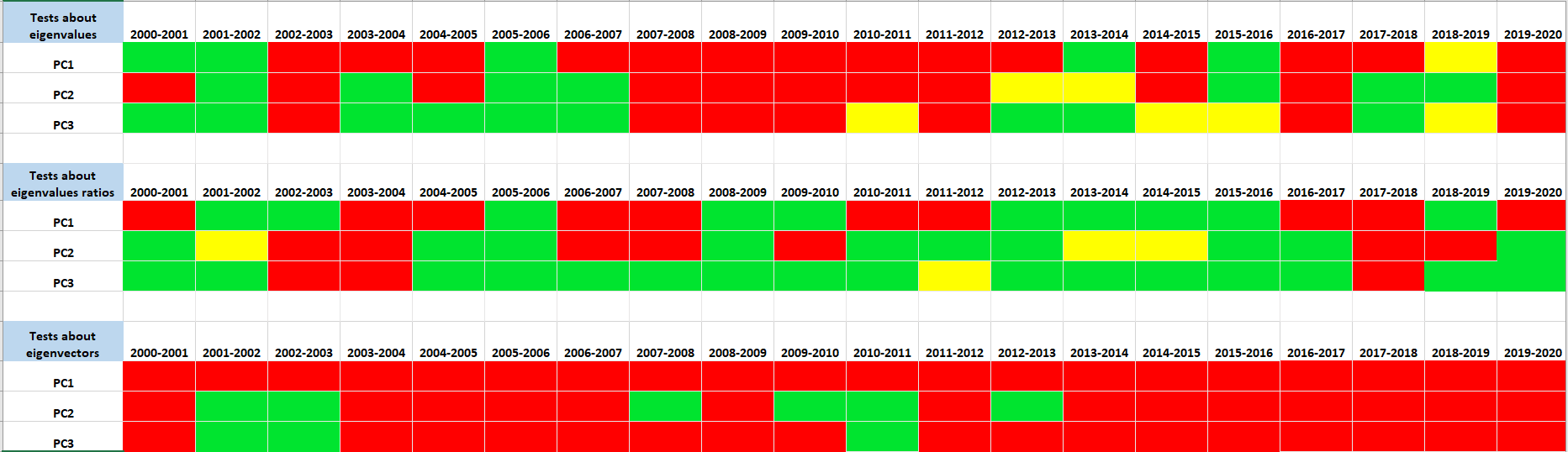}
\caption{Summary of the results of the three tests. Colors represent test results: red indicates rejection at 5\% level, yellow for rejection at 10\% level, and green for retaining at  10\% level.}
\label{fig:test_123ratio_byyear}
\end{figure}	

Figure \ref{fig:test_123ratio_byyear} reveals that testing for equality of  principal eigenvectors between two adjacent years result in more rejections than those about principal eigenvalues or eigenvalue ratios. Moreover, the tests about the first principal eigenvalue and eigenvector are more likely to be rejected than those about the second and third principal components. Let us point out that while it could be indeed the case that the first principal eigenvalue and eigenvector change more frequently than the second or third principal eigenvalue and eigenvector, the difference could also be due to that the first principal component is the strongest so that the related tests are most powerful.

%%%%%%%%%%%%%%%%%%%%%%%%%%%%%%%%%%%%%%%%%%%%%%%%
\section{Conclusion}\label{sec:conclusion}
We establish both one-sample and two-sample central limit theorems for principal eigenvalues and eigenvectors under large factor models.
Based on these CLTs, we develop three tests to detect structural changes in large factor models.
Our tests can reveal whether the change is in principal eigenvalues or eigenvectors or both.
Numerically, these tests are found to have good finite sample performance. Applying these tests to daily returns of the S\&P500 Index constituent stocks, we find that, between two consecutive years,  the principal  eigenvalues, eigenvalue ratios and principal eigenvectors all exhibit frequent changes.

%%%%%%%%%%%%%%%%%%%%%%%%%%%%%%%%%%%%%%%%
%\bibliographystyle{Chicago}
%\bibliographystyle{agsm.bst}
%\bibliography{mainbib.bib}

%%%%%%%%%%%%%%%%%%%%%%%%%%%%%%%%%%%%%%%%
\newpage
\begin{center}
{\large\bf SUPPLEMENTARY MATERIAL}
\end{center} 
The supplementary material includes the proof of Theorem \ref{thm:clt_spike}, \ref{thm:clt_ratio}, \ref{thm:clt_ev} and \ref{thm:test_tv}, and Corollary \ref{cor:test_tvk} in the main text.

\section*{S1. Notations}
Recall the spectral decomposition of $\fSi=\fV \fLa \fV^\mT$, where the orthogonal matrix $\fV=(\fv_1,\ldots,\fv_N)$ consists of the eigenvectors of $\fSi$, and  $\fLa=\diag(\lambda_1,\ldots,\lambda_N)$ with eigenvalues $\lam_1\geq\ldots\geq \lam_N$.
Write $\fLa=\diag(\fLa_A, \fLa_B)$, where
\[
\fLa_A=\diag(\lambda_1,\ldots,\lambda_r)
~~~~~
\textrm{and} ~~~~
\fLa_B=\diag(\lambda_{r+1},\ldots,\lambda_N).
\]
Define $\fx_t=\fV^\mT \fy_t$. Then $\cov(\fx_t)=\fLa$, and the eigenvectors of $\fLa$ are the unit vectors $\fe_1,\ldots,\fe_N$, where $\fe_k$ is the unit vector with 1 in the $k$th entry and zeros elsewhere.
Let
$\fS_N=1/T \cdot\sum_{t=1}^\mT \fx_t \fx_t^\mT $, whose eigenvalues are denoted by $\wh{\lam}_1 \geq \wh{\lam}_2 \geq \ldots \geq \wh{\lam}_N$ with corresponding eigenvectors $\fu_1,\fu_2,\ldots,\fu_N$.
To resolve the ambiguity in the direction of an eigenvector, we specify the direction such that $\fu_k[k]\geq 0$ for all $1\leq k\leq N$, namey, the $k$th coordinate of the $k$th eigenvector is nonnegative (although when the $k$th coordinate is zero, the direction is still not specified, in which case we take an arbitrary direction.)
Note that the eigenvalues of $\wh{\fSi}_N=\dfrac 1T \sum_{t=1}^T \fy_t\fy_t^\mT$ are the same as $\fS_N$, and the eigenvectors are $\wh{\fv}_k=\fV \fu_k$. It follows that
\[
\langle \fv_k, \wh{\fv}_k \rangle
=\langle \fv_k, \fV \fu_k \rangle
=\fv_k^\mT \fV \fu_k = \langle \fe_k, \fu_k \rangle.
\]
In the below, we focus on the analysis of principal  eigenvalues and eigenvectors of~$\fS_N$.

\noindent\textit{Notation}: For any square matrix $\fA$, $\tr(\fA)$ denotes its trace,
 $|\fA|$ its determinant, and $\|\fA\|$ its spectral norm. For any vector $\fv$,
$\|\fv\|$ stands for its $\ell_2$ norm.
Write the $(i,j)$th entry of any matrix $\fW$ as $[\fW]_{ij}$ and $\fv[k]$ as the $k$th entry of a vector~$\fv$.
Use $\ga_j(\fA)$ to denote the $j$th largest eigenvalue of matrix $\fA$.
The notation $\pc$ stands for convergence in probability, $\wk$ represents convergence in law,
$Y_{n}=o_p (f (n))$ means that $Y_{n}/f (n) \pc 0$, and $Y_{n}=O_p (f (n))$
for that the sequence $(Y_n/f(n))$ is tight.
Write $a_n\asymp b_n$ if $c_1b_n\le a_n\le c_2b_n$ for some constants $c_1,c_2>0$.
For any sequence of random matrices  $(\fW_N)$ with fixed dimension,  write $\fW_N=o_p(1)$ or $O_p(1)$ if all the entries of $\fW_N$ are $o_p(1)$ or $O_p(1)$, respectively.
We say an event $\cA_n$ holds with high probability if $P(\cA_n) \geq 1-O(n^{-\ell})$ for any constant $\ell >0$.
Let $\fe_k,\wt{\fe}_{kA},\wt{\fe}_{kB}$ be the unit vectors with 1 in the $k$th coordinate and zeros elsewhere of dimensions $N,r,(N-r)$, respectively.
We use $\fI$ to denote the identity matrix
and $\indic(\cdot)$ to denote the indicator function.
Denote $\bC^+=\{z\in \bC: \im(z)>0 \}$, where $\im(z)$ is the imaginary part of a complex number $z$.
For  any probability distribution $G(x)$, its {\it Stieltjes transform} $m_G(z)$ is defined by
\[
m_G(z) = \int \dfrac{1}{x-z} \ dG(x), ~~~~~ z\in \bC^+.
\]
In all the sequel, $C$ is a generic constant whose value may vary from place  to place.

%%%--------------------------------------------------%%%
\section*{S2. Proof of Theorem \ref{thm:clt_spike} }
\label{ssec:pf_thm:clt_spike}

\begin{proof}[Proof of Theorem \ref{thm:clt_spike}]
Recall that $\fx_t=\fV^\mT \fy_t$. Write
\[
\fX_{N\times T}=(\fx_1,\ldots,\fx_T) :=
\begin{pmatrix}
{\fx_{(1)}}^\mT \\
\vdots \\
{\fx_{(N)}}^\mT
\end{pmatrix}.
\]
Let $\fz_t=\fLa^{-1/2}\fx_t$, $\fz_{(\ell)}=\lambda_{\ell}^{-1/2}\fx_{(\ell)}$ for $t=1,\ldots,T$, $\ell=1,\ldots,N$, and
$\fZ=(\fz_1,\ldots,\fz_T)=(\fz_{(1)},\ldots,\fz_{(N)})^\mT$. Then
\begin{eqnarray*}
\fX=\fLa^{1/2}\fZ =
\begin{pmatrix}
\sqrt{\lambda_1}{\fz_{(1)}}^\mT \\
\vdots \\
\sqrt{\lambda_N}{\fz_{(N)}}^\mT
\end{pmatrix}
=(\fLa^{1/2}\fz_1, \ldots, \fLa^{1/2}\fz_T).
\end{eqnarray*}
Write
\[
\fZ_A=(\fZ_A)_{r\times T}
=
\begin{pmatrix}
{\fz_{(1)}}^\mT \\
\vdots \\
{\fz_{(r)}}^\mT
\end{pmatrix}
=(\fz_1^{(A)}, \ldots, \fz_T^{(A)}),
\]
and
\[
\fZ_B=(\fZ_B)_{(N-r)\times T}
=
\begin{pmatrix}
{\fz_{(r+1)}}^\mT \\
\vdots \\
{\fz_{(N)}}^\mT
\end{pmatrix}
=(\fz_1^{(B)}, \ldots, \fz_T^{(B)}).
\]
Write
\[
\fz_t=
\begin{pmatrix}
\fz_t^{(A)}  \\
\fz_t^{(B)}
\end{pmatrix}, ~~~~~
\fz_t^{(A)}: r\times 1,~~
\fz_t^{(B)}: (N-r)\times 1.
\]
Define the companion matrix of $\fS_N$ as
\begin{eqnarray*}
\ul{\fS}_N &:=& \dfrac 1T \fX^\mT \fX
= \dfrac 1T \fZ^\mT \fLa \fZ
=\dfrac 1T \sum_{j=1}^\mT \lambda_j \fz_{(j)} \fz_{(j)}^\mT \\
&=& \dfrac 1T \sum_{j=1}^r \lambda_j \fz_{(j)} \fz_{(j)}^\mT
+ \dfrac 1T \sum_{j=r+1}^N \lambda_j \fz_{(j)} \fz_{(j)}^\mT \\
&=:& \ul{\fS}_{11} + \ul{\fS}_{22},
\end{eqnarray*}
where
\begin{eqnarray*}
\ul{\fS}_{11} &=& \dfrac 1T \sum_{j=1}^r \lambda_j \fz_{(j)} \fz_{(j)}^\mT
=\dfrac 1T \fZ_A^\mT \fLa_A \fZ_A , \\
\ul{\fS}_{22} &=& \dfrac 1T \sum_{j=r+1}^N \lambda_j \fz_{(j)} \fz_{(j)}^\mT
=\dfrac 1T \fZ_B^\mT \fLa_B \fZ_B.
\end{eqnarray*}
Further denote the companion matrices of $\ul{\fS}_{11}$ and $\ul{\fS}_{22}$ as
\[
\fS_{11}= \dfrac 1T \fLa_A^{1/2} \fZ_A \fZ_A^\mT \fLa_A^{1/2}, ~~~~~~~
\fS_{22}= \dfrac 1T \fLa_B^{1/2} \fZ_B \fZ_B^\mT \fLa_B^{1/2}.
\]

Define the event $\cF_s=\{\|\fS_{22}\| \le C_s \}$ for some constant $C_s\in (0,+\infty)$.
By Wely's Theorem,
Assumption (A.ii) and Theorem 9.13 of \cite{BS_book}, for any $\ell>0$, we can choose a $C_s$  sufficiently large such that
\begin{equation}\label{eq:S_22_bd}
  P(\cF_s^c) = o(T^{-\ell}).
\end{equation}

Note that the non-zero eigenvalues of $\ul{\fS}_{11}$ and $\ul{\fS}_{22}$ are the same as their companion matrices $\fS_{11}$ and $\fS_{22}$, respectively. For any principal eigenvalues $\lambda_k$, $k=1,\ldots,r$, the matrix
\[
\dfrac{\fS_{11}}{\lambda_k}=\dfrac 1T \(\dfrac{\fLa_A}{\lambda_k} \)^{1/2} \fZ_A \fZ_A^\mT \(\dfrac{\fLa_A}{\lambda_k} \)^{1/2}
\]
is in the low-dimensional situation as considered in Theorem 1 of \cite{andersonPCA}, by which one has  $\dfrac{\ga_j(\fS_{11})}{\lambda_k}-\dfrac{\lambda_j}{\lambda_k} \to 0$  for $1\le j,k\le r$.
Because $\|\fS_{22}\|=O_p(1)$, by Wely's Theorem that
\begin{eqnarray}\label{eqn:Weyl_ineq}
\ga_j(\fS_{11}) + \ga_T(\fS_{22}) \le \wh{\lam}_j \le \ga_j(\fS_{11}) + \ga_1(\fS_{22}), ~~~~ 1\le j\le r,
\end{eqnarray}
we get
\begin{eqnarray}\label{eqn:limit_evalues}
\dfrac{\wh{\lam}_j  }{\lambda_k} - \dfrac{\lambda_j}{\lambda_k}
\ \pc \ 0  ~~~~~~ \textrm{for}~ 1\le j,k\le r.
\end{eqnarray}
In particular, ${\wh{\lam}_k  }/{\lambda_k} \pc 1$  for $k=1,\ldots,r$.
	
Next, we derive the central limit theorem of $\wh{\lam}_k/\lambda_k$ for $1\le k\le r$.
	
Write $\fx_t=\begin{pmatrix} \fx_t^{(A)} \\ \fx_t^{(B)} \end{pmatrix}$, where $\fx_t^{(A)}=\fLa_A^{1/2} \fz_t^{(A)}$,  $\fx_t^{(B)}=\fLa_B^{1/2} \fz_t^{(B)}$.
Further denote
\[
\fX_A=(\fx_1^{(A)}, \ldots, \fx_T^{(A)})=\fLa_A^{1/2} \fZ_A, ~~~~ \textrm{and} ~~~~
\fX_B=(\fx_1^{(B)}, \ldots, \fx_T^{(B)})=\fLa_B^{1/2} \fZ_B.
\]
The sample covariance matrix $\fS_N$ can be decomposed as
\begin{eqnarray*}
\fS_N &=& \dfrac 1T \sum_{t=1}^\mT \fx_t \fx_t^\mT
=\dfrac 1T
\begin{pmatrix}
\sum_{t=1}^\mT \fx_t^{(A)} {\fx_t^{(A)}}^\mT &
\sum_{t=1}^\mT \fx_t^{(A)} {\fx_t^{(B)}}^\mT \\
\sum_{t=1}^\mT \fx_t^{(B)} {\fx_t^{(A)}}^\mT &
\sum_{t=1}^\mT \fx_t^{(B)} {\fx_t^{(B)}}^\mT
\end{pmatrix}	\\
&=& \dfrac 1T
\begin{pmatrix}
\fX_A \fX_A^\mT & \fX_A \fX_B^\mT \\
\fX_B \fX_A^\mT & \fX_B \fX_B^\mT
\end{pmatrix}	
= \dfrac 1T
\begin{pmatrix}
\fLa_A^{1/2} \fZ_A \fZ_A^\mT \fLa_A^{1/2} &
\fLa_A^{1/2} \fZ_A \fZ_B^\mT \fLa_B^{1/2} \\
\fLa_B^{1/2} \fZ_B \fZ_A^\mT \fLa_A^{1/2} &
\fLa_B^{1/2} \fZ_B \fZ_B^\mT \fLa_B^{1/2}
\end{pmatrix}	\\
&=& \begin{pmatrix}
\fS_{11} & \fS_{12} \\
\fS_{21} & \fS_{22}
\end{pmatrix}.
\end{eqnarray*}
Under Assumptions (A.ii), B and C, by \cite{SB95}, the ESD of $\fS_{22}$ almost surely converges to a non-random probability distribution $F$ whose Stieltjes transform $m(z)$ is the unique solution in the domain $\bC^+$ to the equation
\[
m(z)=\int \dfrac{1}{t(1-\rho-\rho zm(z))-z} \ dH(z), ~~~~
\mbox{for all } z \in \bC^+.
\]
By definition, each principal sample eigenvalues $\wh{\lam}_k$ solves the equation
\begin{equation}\label{eq:hat_lam}
0=|\lam \fI-\fS_N|
=|\lambda\fI -\fS_{22}| \cdot |\lambda\fI -\wt{\fK}_N(\lambda)|,
\end{equation}
where
\begin{equation}\label{dfn:KN_tilde}
\wt{\fK}_N(\lambda) = \fS_{11} +\fS_{12} (\lambda\fI -\fS_{22})^{-1}\fS_{21}
=\dfrac 1T \fX_A(\fI +\fA_N)\fX_A^\mT
=\fLa_A^{1/2} \fK_N(\lambda) \fLa_A^{1/2},
\end{equation}
and
\[
\fA_N = \fA_N(\lambda)=\dfrac 1T \fX_B^\mT
(\lambda\fI-\fS_{22})^{-1} \fX_B,
	~~~~~~~
\fK_N(\lambda) = \dfrac 1T \fZ_A (\fI+\fA_N) \fZ_A^\mT.
\]
Further define
\[
\fR_N=\fR_N(\lambda)=\dfrac{1}{\sqrt T}
\lb \fZ_A(\fI+\fA_N)\fZ_A^\mT -\tr(\fI+\fA_N)\cdot \fI \rb,
\]
then
\begin{eqnarray} \label{eqn:K_R}
\fK_N(\lambda)=\dfrac{1}{\sqrt T} \fR_N(\lambda) + \dfrac 1T
\tr(\fI+\fA_N(\lam))\cdot \fI.
\end{eqnarray}

We first give three  lemmas, which will be repeatedly used in the following proofs.
The first lemma is about  the random matrix $\fA_N$, and the second and third ones are about the limiting distributions of $(\fR_N(\lam) )$.
The proofs of these lemmas are postponed to the end of this subsection.

\begin{lem}\label{lem:limit_A}
Under Assumptions A--C, for $i,j=1,\ldots, r$, we have
\begin{eqnarray}\label{limit_trA}
&& \dfrac 1T \tr \fA_N(\lam_i) =O_p(N^{-1}),
~~~~~
\dfrac 1T \tr \(\fA_N(\lambda_i) \fA_N(\lam_j) \) =O_p(N^{-2}),
~~~~ \nonumber
\\
 \textrm{and} && ~~~~
\dfrac 1T \sum_{\ell=1}^\mT \( [\fA_N(\lam_i)]_{\ell\ell} [\fA_N(\lam_j)]_{\ell\ell} \) = O_p(N^{-2}) .
\end{eqnarray}
\end{lem}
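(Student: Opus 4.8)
The plan is to reduce each of the three quantities to the trace of a polynomial in $\fS_{22}$, using the identity $\fX_B\fX_B^\mT = \fLa_B^{1/2}\fZ_B\fZ_B^\mT\fLa_B^{1/2} = T\fS_{22}$ together with cyclicity of the trace, and then to read off the order of magnitude from the spectrum of $\fS_{22}$. Throughout I would work on the high-probability event $\cF_s = \{\|\fS_{22}\| \le C_s\}$ from \eqref{eq:S_22_bd}: since $\lambda_i/N \to \theta_i \in (0,+\infty)$, on $\cF_s$ one has $\lambda_i - \|\fS_{22}\| \ge cN$ for some $c>0$ and all large $N$, so $\fD_i := (\lambda_i\fI-\fS_{22})^{-1}$ is a well-defined positive semidefinite matrix with $\|\fD_i\| = O(1/N)$. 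Because $P(\cF_s^c) = o(T^{-\ell})$, any $O_p$-bound established on $\cF_s$ holds unconditionally, and recall $T \asymp N$.

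For the first two estimates I would use the cyclic identities $\tr\fA_N(\lambda_i) = \frac1T\tr(\fD_i\fX_B\fX_B^\mT) = \tr(\fD_i\fS_{22})$ and $\tr(\fA_N(\lambda_i)\fA_N(\lambda_j)) = \frac1{T^2}\tr(\fD_i\fX_B\fX_B^\mT\fD_j\fX_B\fX_B^\mT) = \tr(\fD_i\fS_{22}\fD_j\fS_{22})$. Since $\fD_i,\fD_j,\fS_{22}$ are simultaneously diagonalizable, these equal $\sum_{k=1}^{N-r}\mu_k/(\lambda_i-\mu_k)$ and $\sum_{k=1}^{N-r}\mu_k^2/[(\lambda_i-\mu_k)(\lambda_j-\mu_k)]$ respectively, where $\mu_1,\dots,\mu_{N-r}$ are the eigenvalues of $\fS_{22}$. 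On $\cF_s$ each summand is $O(N^{-1})$, respectively $O(N^{-2})$, and there are $N-r$ of them, so $\tr\fA_N(\lambda_i)=O_p(1)$ and $\tr(\fA_N(\lambda_i)\fA_N(\lambda_j))=O_p(N^{-1})$. Dividing by $T\asymp N$ gives the first two displays.

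For the third estimate, $\fA_N(\lambda_i)$ is symmetric, so Cauchy--Schwarz gives $\sum_{\ell=1}^T|[\fA_N(\lambda_i)]_{\ell\ell}\,[\fA_N(\lambda_j)]_{\ell\ell}| \le \big(\sum_\ell[\fA_N(\lambda_i)]_{\ell\ell}^2\big)^{1/2}\big(\sum_\ell[\fA_N(\lambda_j)]_{\ell\ell}^2\big)^{1/2}$, and $\sum_\ell[\fA_N(\lambda_i)]_{\ell\ell}^2 \le \|\fA_N(\lambda_i)\|_F^2 = \tr(\fA_N(\lambda_i)^2)$, which is $O_p(N^{-1})$ by the $i=j$ case of the second estimate. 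Hence $\sum_\ell[\fA_N(\lambda_i)]_{\ell\ell}[\fA_N(\lambda_j)]_{\ell\ell} = O_p(N^{-1})$, and dividing by $T$ yields $O_p(N^{-2})$. The argument is essentially bookkeeping once the reduction to $\fS_{22}$ is made; the only mildly delicate point is the third display, where bounding each $[\fA_N(\lambda_i)]_{\ell\ell}=O_p(N^{-1})$ and summing over the $T\asymp N$ indices $\ell$ would cost a spurious factor $T$, and the Frobenius-norm reduction to the already-proven second estimate is what avoids this loss.
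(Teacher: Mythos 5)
Your proposal is correct and follows essentially the same route as the paper: restrict to the high-probability event $\cF_s$, use cyclicity of the trace to rewrite each quantity as a trace of a rational function of $\fS_{22}$, and read off the order from $\|(\lam_i\fI-\fS_{22})^{-1}\|=O(1/N)$ together with $T\asymp N$. The only (minor) difference is in the third estimate, where the paper bounds $\E\bigl([\fA_N(\lam_i)]_{\ell\ell}^2\indic(\cF_s)\bigr)$ via the exchangeability of the diagonal entries, whereas you use the pathwise bound $\sum_{\ell}[\fA_N(\lam_i)]_{\ell\ell}^2\le\tr\bigl(\fA_N(\lam_i)^2\bigr)$ plus Cauchy--Schwarz; both reduce to the already-established second estimate and are equally valid.
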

	
\begin{lem}\label{lem:limit_R}
Under Assumptions  A--C and assume that $\lam\asymp N$,  the random matrix $\fR_N(\lam)$ converges weakly to a symmetric Gaussian random matrix $\fR=([\fR]_{ij})$ with zero-mean and the following covariance function:
\begin{eqnarray*}
\cov\([\fR]_{ij}, [\fR]_{i' j' }  \) =
\left\{
\begin{array}{ll}
 0, & \textrm{if} ~ i\neq i', ~ \textrm{or} ~ j\neq j'  ,
\\
\var\([\fR]_{ij} \) =1, & \textrm{if} ~ i=i' \neq j=j',
\\
\var\([\fR]_{ii} \) = \E(\fz_1[i] )^4 -1, & \textrm{if} ~ i=i' = j=j'.
\end{array}
\right.
\end{eqnarray*}
\end{lem}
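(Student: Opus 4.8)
The plan is to exploit two features of the matrix $\fR_N(\lam)=\frac{1}{\sqrt T}\big(\fZ_A(\fI+\fA_N)\fZ_A^\mT-\tr(\fI+\fA_N)\cdot\fI\big)$: first, $\fA_N(\lam)$ depends only on $\fZ_B$ (through $\fX_B$ and $\fS_{22}$) and on $\fLa_B$, hence is \emph{independent} of $\fZ_A$; second, $\fA_N(\lam)$ is negligible in the quantitative sense supplied by Lemma~\ref{lem:limit_A}. I would therefore first replace $\fI+\fA_N$ by the identity, reducing to a classical multivariate central limit theorem for sums of i.i.d.\ vectors, and then show that the error so introduced is $o_p(1)$.

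\emph{Reduction to $\fA_N=0$.} Set $\fR_N^0:=\frac{1}{\sqrt T}(\fZ_A\fZ_A^\mT-T\fI)$, i.e.\ $\fR_N(\lam)$ with $\fA_N$ set to zero; this matrix is free of $\lam$. Since $\lam\asymp N\to\infty$ while $\|\fS_{22}\|=O_p(1)$ with $P(\cF_s^c)=o(T^{-\ell})$ by \eqref{eq:S_22_bd}, it suffices to work on the event $\cF_s$, on which $(\lam\fI-\fS_{22})^{-1}$, and hence $\fA_N$, are well defined for all large $N$ (off $\cF_s$ one may set $\fR_N(\lam)$ to $0$ without affecting the weak limit). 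For $1\le i,j\le r$,
\[
[\fR_N(\lam)]_{ij}-[\fR_N^0]_{ij}=\frac{1}{\sqrt T}\big(\fz_{(i)}^\mT\fA_N\fz_{(j)}-\delta_{ij}\tr(\fA_N)\big),
\]
which, conditionally on $\fZ_B$ (so that $\fA_N$ is frozen and $\fz_{(1)},\dots,\fz_{(r)}$ remain independent mean-zero rows of unit variance), is centered, with conditional second moment given by the standard moment identities for bilinear and quadratic forms:
\[
\E\big[\,([\fR_N(\lam)]_{ij}-[\fR_N^0]_{ij})^2\mid\fZ_B\,\big]=\frac{1}{T}\tr(\fA_N^2)\qquad(i\ne j),
\]
\[
\E\big[\,([\fR_N(\lam)]_{ii}-[\fR_N^0]_{ii})^2\mid\fZ_B\,\big]=\big(\E(\fz_1[i])^4-3\big)\frac{1}{T}\sum_{\ell=1}^T[\fA_N]_{\ell\ell}^2+\frac{2}{T}\tr(\fA_N^2).
\]
By Lemma~\ref{lem:limit_A} all three right-hand quantities are $O_p(N^{-2})$, and $\E(\fz_1[i])^4$ is bounded by Assumption~B; conditional Chebyshev therefore gives $[\fR_N(\lam)]_{ij}-[\fR_N^0]_{ij}=o_p(1)$ for every $(i,j)$, hence $\fR_N(\lam)-\fR_N^0=o_p(1)$ as an $r\times r$ matrix because $r$ is fixed.

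\emph{CLT for $\fR_N^0$.} Entrywise, $[\fR_N^0]_{ij}=\frac{1}{\sqrt T}\sum_{t=1}^T(\fz_t[i]\fz_t[j]-\delta_{ij})$. Stack the $\binom{r+1}{2}$ entries with $i\le j$ into the vector $\fw_t:=(\fz_t[i]\fz_t[j]-\delta_{ij})_{1\le i\le j\le r}$; the $\fw_t$, $t=1,\dots,T$, are i.i.d.\ with mean zero and finite covariance (this is the one place the uniform fourth-moment bound in Assumption~B is used). By the multivariate Lindeberg--L\'evy theorem, $\frac{1}{\sqrt T}\sum_t\fw_t\wk N(0,\fGa)$ with $\fGa=\cov(\fw_1)$, and a direct computation using only independence of the coordinates of $\fz_1$ and $\E(\fz_1[i])=0$, $\E(\fz_1[i])^2=1$ shows that $\fGa$ is diagonal with $(i,j)$-entry equal to $1$ for $i<j$ and to $\E(\fz_1[i])^4-1$ for $i=j$ --- every mixed moment that would contribute an off-diagonal entry contains some coordinate to an odd power and hence vanishes. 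Extending by symmetry, $[\fR]_{ji}:=[\fR]_{ij}$ (consistent with the symmetry of $\fR_N(\lam)$), identifies the limit with the matrix $\fR$ in the statement, so $\fR_N^0\wk\fR$; with the reduction step and Slutsky's theorem this gives $\fR_N(\lam)\wk\fR$.

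The one genuinely delicate point is the reduction step: the correction $\frac{1}{\sqrt T}(\fz_{(i)}^\mT\fA_N\fz_{(j)}-\delta_{ij}\tr(\fA_N))$ has to vanish even though $\tr(\fA_N)$ itself is only $O_p(1)$ and not $o_p(1)$, so a crude spectral bound on $\fA_N$ will not suffice --- the three refined estimates of Lemma~\ref{lem:limit_A} are exactly what is needed, and conditioning on $\fZ_B$ first (which turns them into conditional variance bounds) is what makes them usable. Everything else is classical.
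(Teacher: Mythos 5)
Your proof is correct, but it follows a genuinely different route from the paper's. The paper proves Lemma \ref{lem:limit_R} in one step: it stacks the entries $\frac{1}{\sqrt T}\bigl(\fz_{(i)}^\mT(\fI+\fA_N)\fz_{(j)}-\tr(\fI+\fA_N)\,\E(\fz_{(i)}[1]\fz_{(j)}[1])\bigr)$, $1\le i\le j\le r$, into a vector, uses Lemma \ref{lem:limit_A} only to verify that $\frac1T\sum_t([\fI+\fA_N]_{tt})^2\to1$ and $\frac1T\tr(\fI+\fA_N)^2\to1$, and then invokes Corollary 7.1 of \cite{BY08} (a CLT for random quadratic/sesquilinear forms with a random weight matrix), reading the limiting covariance off that result. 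You instead peel off the $\fA_N$ contribution first --- exploiting the independence of $\fZ_A$ and $\fZ_B$ to compute conditional variances of the correction terms, which Lemma \ref{lem:limit_A} bounds by $O_p(N^{-2})$ --- and then apply the elementary multivariate Lindeberg--L\'evy theorem to the i.i.d.\ sum $\fR_N^0=\frac{1}{\sqrt T}(\fZ_A\fZ_A^\mT-T\fI)$. What your approach buys is self-containedness (no appeal to the external Bai--Yao machinery) and transparency: it makes visible that the weight $\fI+\fA_N$, and hence $\lam$, contributes nothing to the limit, whereas in the paper this fact is hidden inside the normalization constants entering Bai--Yao's covariance formula. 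What the paper's approach buys is brevity, and it is the same tool the authors must use anyway for $\wt{\fR}_N$ and $\wt{\fQ}_N$ in Proposition \ref{thm:clt_ev_norm}, where the analogous weight matrices are \emph{not} asymptotically the identity and your reduction would not apply. One cosmetic slip in your write-up: the parenthetical claim that every off-diagonal entry of $\fGa$ vanishes because ``some coordinate appears to an odd power'' does not cover the pair $i=j\neq i'=j'$, where $\E(\fz_1[i]^2\fz_1[i']^2)=1$ is nonzero and the covariance vanishes only after subtracting $\E(\fz_1[i]^2)\,\E(\fz_1[i']^2)=1$; the computation itself is of course still correct.
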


\begin{lem}\label{lem:limit_R_joint}
Under Assumptions  A--C and $\lam\asymp N$, the  block diagonal random matrix $\fR_{J_N} =\diag(\fR_N(\lam_1), \ldots, \fR_N(\lam_r) ) $ converges weakly to a symmetric Gaussian block diagonal random matrix
$\fR_{J} =\diag(\fR_1, \ldots, \fR_r ) $ with zero-mean and the following covariance function, for any $1\le m, m' \le r$,
\begin{eqnarray*}
	\cov\([\fR_m]_{ij}, [\fR_{m'}]_{i' j' }  \) =
	\left\{
	\begin{array}{ll}
		 0, & \textrm{if} ~ i\neq i', ~ \textrm{or} ~ j\neq j'  ,
		\\
		1, & \textrm{if} ~ i=i' \neq j=j',
		\\
	 \E(\fz_1[i] )^4 -1, & \textrm{if} ~ i=i' = j=j'.
	\end{array}
	\right.
\end{eqnarray*}
\end{lem}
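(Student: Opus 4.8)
The plan is to reduce the joint statement to a single scalar CLT via the Cram\'er--Wold device and then rerun the conditional martingale argument underlying Lemma~\ref{lem:limit_R}. Fix symmetric $r\times r$ matrices $\fC_1,\ldots,\fC_r$ and set
\[
L_N \ := \ \sum_{m=1}^r \tr\!\(\fC_m\fR_N(\lam_m)\)
\ = \ \frac{1}{\sqrt T}\sum_{m=1}^r\sum_{i,j=1}^r[\fC_m]_{ij}\(\fz_{(i)}^\mT M_m\fz_{(j)}-\delta_{ij}\tr M_m\),
\]
where $M_m:=\fI+\fA_N(\lam_m)$ is a $T\times T$ matrix depending only on $\fZ_B$, and $\fz_{(i)}=(\fz_1[i],\ldots,\fz_T[i])^\mT$ is the $i$th row of $\fZ_A$. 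It suffices to show $L_N\wk N(0,\sigma^2)$, where $\sigma^2$ is the quadratic form in $(\fC_m)$ built from the covariance kernel in the statement (which, crucially, does not depend on $m,m'$); by Cram\'er--Wold the entries of $\fR_{J_N}$ then converge jointly to the claimed Gaussian.

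Since $\fZ_A$ and $\fZ_B$ are independent, I would argue conditionally on $\fZ_B$. On the event $\cF_s=\{\|\fS_{22}\|\le C_s\}$ each $\lam_m\fI-\fS_{22}$ is invertible (as $\lam_m\asymp N\to\infty$ by Assumption~(A.i)), so the $M_m$ are conditionally deterministic, $L_N$ is conditionally centered, and $L_N$ equals a fixed linear combination of bilinear forms in the i.i.d.\ columns $\fz_t^{(A)}$ of $\fZ_A$. Writing $L_N=\sum_{t=1}^T D_t$ with $D_t$ the sum of all terms carrying time index $t$ and no later one, $(D_t)$ is a conditional martingale-difference sequence, and---exactly as in the proof of Lemma~\ref{lem:limit_R}---I would verify (i) the conditional Lindeberg condition $\sum_t\E[D_t^2\indic(|D_t|>\vep)\mid\fZ_B]\pc0$, using $\sup_{N}\max_\ell\E(\fz_1[\ell])^4<\infty$ together with the uniform bound $\max_t\sum_s(M_m)_{st}^2=O_p(1)$ (which holds because $\|(\lam_m\fI-\fS_{22})^{-1}\|=O(N^{-1})$ on $\cF_s$ forces $\sum_s[\fA_N(\lam_m)]_{st}^2\pc0$); and (ii) $\sum_t\E[D_t^2\mid\fZ_B]\pc\sigma^2$. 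The martingale CLT then gives $\E[e^{\mathrm i\theta L_N}\mid\fZ_B]\pc e^{-\theta^2\sigma^2/2}$ on $\cF_s$; since $|e^{\mathrm i\theta L_N}|\le1$ and $P(\cF_s^c)=o(T^{-\ell})$ by~\eqref{eq:S_22_bd}, dominated convergence yields the unconditional convergence.

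The only genuinely new ingredient is the covariance kernel, in particular its cross-block ($m\neq m'$) entries. A direct fourth-moment computation using $\E\fz_1[\ell]=0$, $\E(\fz_1[\ell])^2=1$, independence across coordinates, and symmetry of the $M_m$ gives, for $i\le j$ and $i'\le j'$,
\[
\cov\!\(\fz_{(i)}^\mT M_m\fz_{(j)},\,\fz_{(i')}^\mT M_{m'}\fz_{(j')}\ \big|\ \fZ_B\)=
\begin{cases}
0,& (i,j)\neq(i',j'),\\
\tr(M_mM_{m'}),& i=i'\neq j=j',\\
\big(\E(\fz_1[i])^4-3\big)\sum_s(M_m)_{ss}(M_{m'})_{ss}+2\tr(M_mM_{m'}),& i=j=i'=j'.
\end{cases}
\]
Dividing by $T$ and invoking Lemma~\ref{lem:limit_A}, namely $\tfrac1T\tr\fA_N(\lam_m)=O_p(N^{-1})$, $\tfrac1T\tr(\fA_N(\lam_m)\fA_N(\lam_{m'}))=O_p(N^{-2})$ and $\tfrac1T\sum_s[\fA_N(\lam_m)]_{ss}[\fA_N(\lam_{m'})]_{ss}=O_p(N^{-2})$, one obtains $\tfrac1T\tr(M_mM_{m'})\pc1$ and $\tfrac1T\sum_s(M_m)_{ss}(M_{m'})_{ss}\pc1$ for \emph{every} pair $m,m'$. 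Hence the three conditional covariances above converge to $0$, $1$ and $\E(\fz_1[i])^4-1$, matching the stated kernel; in particular the limiting blocks $\fR_1,\ldots,\fR_r$ are almost surely identical. Combining this computation with the conditional CLT via Cram\'er--Wold completes the proof.

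I expect the main obstacle to be the conditional joint CLT: running the martingale-difference argument for a linear combination of several bilinear forms in the \emph{same} array $\fZ_A$ simultaneously and checking the Lindeberg condition under only a fourth-moment assumption, together with the slightly delicate bookkeeping needed to confirm that the cross-block covariances coincide exactly with the within-block ones. Since the single-block case is already available as Lemma~\ref{lem:limit_R}, however, the incremental difficulty here is mostly organizational.
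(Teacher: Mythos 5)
Your proposal is correct, and the heart of it --- reducing every cross-block covariance to $\tfrac1T\tr(M_mM_{m'})\pc 1$ and $\tfrac1T\sum_s(M_m)_{ss}(M_{m'})_{ss}\pc 1$ via Lemma~\ref{lem:limit_A}, so that the $m\neq m'$ entries of the kernel coincide with the $m=m'$ ones --- is exactly the point of the paper's argument. Where you diverge is in how the Gaussian limit itself is produced. The paper stacks the $r$ blocks into an $r\cdot\tfrac12 r(r+1)$-dimensional vector of sesquilinear forms in $\fZ_A$ and invokes the existing joint CLT for such forms (Corollary 7.1 of \cite{BY08}, already used for Lemma~\ref{lem:limit_R}; cf.\ Theorem 2.1 of \cite{WSY2014}), so its proof of Lemma~\ref{lem:limit_R_joint} is essentially a one-paragraph extension of Lemma~\ref{lem:limit_R}. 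You instead re-derive that CLT from scratch: Cram\'er--Wold, conditioning on $\fZ_B$ on the event $\cF_s$, a martingale-difference decomposition in the time index, and a Lindeberg check under fourth moments. This route is sound --- on $\cF_s$ one has $\|\fA_N(\lam_m)\|=O_p(N^{-1})$, which gives your bound $\max_t\sum_s(M_m)_{st}^2=O_p(1)$, and your conditional fourth-moment computation of the covariances is correct --- but be aware that you are reproving a cited result, since Bai and Yao's theorem is itself established by precisely this conditional martingale argument. What your version buys is self-containedness and an explicit display of the cross-block conditional covariances (including the correct observation that the limiting blocks $\fR_1,\ldots,\fR_r$ are almost surely identical); what it costs is the truncation and Lindeberg bookkeeping that the citation lets the paper skip.
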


We now return to the analysis of principal sample eigenvalues $\wh{\lam}_k$.
Noting that the principal eigenvalues of $\fS_N$ go to infinity and the estimate \eqref{eq:S_22_bd}, without loss of generality, we can assume that for $N$ large enough, $\wh{\lam}_k$ is not an eigenvalue of~$\fS_{22}$. It follows that  $\wh{\lam}_k$ is the $k$th eigenvalue of matrix $\wt{\fK}_N(\wh{\lam}_k)$.

Note that
\begin{eqnarray*}
\fK_N(\lam_k) -  \fK_N(\wh{\lam}_k)
&=&
\dfrac{1}{T^2} \fZ_A \fX_B^\mT \lb (\wh{\lambda}_k\fI-\fS_{22})^{-1} -(\lambda_k\fI-\fS_{22})^{-1} \rb \fX_B \fZ_A^\mT \\
&=& \(1-\dfrac{\wh{\lam}_k}{\lambda_k}\)\fQ_N,
\end{eqnarray*}
where
\[
\fQ_N = \dfrac{1}{T^2\wh{\lam}_k} \fZ_A \fX_B^\mT \(\fI-\wh{\lam}_k^{-1}\fS_{22}\)^{-1} \(\fI-\lambda_k^{-1}\fS_{22}\)^{-1}\fX_B\fZ_A^\mT.
\]
By the elementary formulae,
\[
\fX^\mT (\fI - \fX \fX^\mT)^{-1} = (\fI-\fX^\mT \fX)^{-1} \fX^\mT,
~~~~~
 (\fI - \fX \fX^\mT)^{-1} \fX =\fX (\fI-\fX^\mT \fX)^{-1} ,
\]
it follows that
\begin{eqnarray*}
\fQ_N
&=& \dfrac{1}{T^2\wh{\lam}_k}\fZ_A \(\fI-\dfrac{1}{T\wh{\lam}_k}\fX_B^\mT\fX_B\)^{-1} \fX_B^\mT \fX_B \(\fI-\dfrac{1}{T\lambda_k}\fX_B^\mT\fX_B\)^{-1} \fZ_A^\mT
\\
&=& O_p(1/N),
\end{eqnarray*}
where the last step comes from the fact that eigenvalues of $\fS_{22}$ are  $O_p(1)$  and an   analysis similar to that of  $\fR_N$ in the proof of Lemma \ref{lem:limit_R} below.
It follows from~\eqref{eqn:limit_evalues} that
\begin{eqnarray}\label{KK_hat}
\fK_N(\lam_k) -  \fK_N(\wh{\lam}_k)  = o_p(1/N).
\end{eqnarray}

Recall that $\wh{\lam}_k$ is the $k$th largest eigenvalue of matrix $\wt{\fK}_N(\wh{\lam}_k)$.
Denote matrix $\fD=\( \wh{\lam}_k\fI-\wt{\fK}_N(\wh{\lam}_k) \)/\lam_k$. From Assumption A, Lemmas \ref{lem:limit_A}, \ref{lem:limit_R}, and equations \eqref{eqn:limit_evalues}, \eqref{eqn:K_R} and  \eqref{KK_hat}, it follows that
\[
[\fD]_{kk}=\(\frac{\wh{\lam}_k}{\lam_k}-1\)-\dfrac{1}{\sqrt T}[\fR_N(\lam_k)]_{kk}+O_p(1/T),
\]
for $i\neq k$,
\begin{equation}\label{eqn:diag_D_ii}
[\fD]_{ii}=\(\frac{\wh{\lam}_k}{\lam_k}-\dfrac{\lam_i}{\lam_k} \)-\dfrac{\lam_i}{\lam_k}\cdot \dfrac{1}{\sqrt T}[\fR_N(\lam_k)]_{ii}+O_p(1/T)
\pc 1-\th_i/\th_k \neq 0.
\end{equation}
and
\[
[\fD]_{ij}=-\dfrac{\sqrt{\lam_i\lam_j}}{\lam_k}\cdot \dfrac{1}{\sqrt T}[\fR_N(\lam_k)]_{ij} +o_p(1/N)=O_p(1/\sqrt{T}).
\]
Let det$(\fA)$ be the determinant of a matrix $\fA$, then
\begin{eqnarray*}
0&=&\textrm{det}\(\wh{\lam}_k\fI - \wt{\fK}_N(\wh{\lam}_k)\)/\lam_k \\
&=& \textrm{det}\(
\begin{array}{ccccc}
[\fD]_{11} &  & & & \\
       & \ddots & & & O_p(1\sqrt{T}) \\
 & &  [\fD]_{kk} & & \\
O_p(1/\sqrt{T}) & & & \ddots & \\
& & & & [\fD]_{rr}
\end{array}
\)\\
&=& \(\prod_{i=1,i\neq k}^{r} [\fD]_{ii}\) \cdot \left[\(\frac{\wh{\lam}_k}{\lam_k}-1\)-\frac{1}{\sqrt T} [\fR_N]_{kk} \right] \\
&& ~~~~~~ +O_p(1/T)  \left[\(\frac{\wh{\lam}_k}{\lam_k}-1\)-\frac{1}{\sqrt T} [\fR_N]_{kk} \right]
 +O_p(1/T).
\end{eqnarray*}
Using \eqref{eqn:diag_D_ii} we then obtain that
\[
\dfrac{\wh{\lam}_k}{\lam_k} = 1+ \dfrac{1}{\sqrt T} [\fR_N(\lam_k)]_{kk} + O_p(1/T).
\]
In particular,
\begin{eqnarray} \label{eqn:exp_CLT_evalue}
\sqrt T \(\dfrac{\wh{\lam}_k}{\lam_k} - 1 \) - [\fR_N(\lam_k)]_{kk} \ \pc \ 0.
\end{eqnarray}
By Lemma \ref{lem:limit_R}, we obtain
\[
\sqrt T \(\dfrac{\wh{\lam}_k}{\lam_k} - 1  \) \ \wk \ N(0, \E(\fz_1[k])^4 -1 ).
\]
Similarly, the joint convergence of
$\sqrt T \({\wh{\lam}_1}/{\lam_1} - 1, \cdots,
{\wh{\lam}_r}/{\lam_r} - 1 \)$ follows from \eqref{eqn:exp_CLT_evalue} and Lemma  \ref{lem:limit_R_joint}.
\end{proof}

We now prove Lemmas \ref{lem:limit_A}--\ref{lem:limit_R_joint}.
%------------------------------------------------%
\begin{proof}[Proof of Lemma \ref{lem:limit_A}]
By the estimate \eqref{eq:S_22_bd}, it suffices to prove Lemma \ref{lem:limit_A} for $\fA_N(\lam_i) \indic(\cF_s)$ and  $\fA_N(\lam_j) \indic(\cF_s)$.
Under Assumption A, we have
\[
\dfrac 1T \tr\fA_N(\lambda_i) \indic(\cF_s)
=\dfrac 1T \tr\lb\(\lambda_i\fI-\fS_{22} \)^{-1} \fS_{22} \rb \indic(\cF_s)
=O_p(N^{-1}),
\]
and
\[
\aligned
\dfrac 1T \tr \(\fA_N(\lambda_i)  \fA_N(\lam_j) \) \indic(\cF_s)
&= \dfrac 1T \tr\((\lam_i\fI-\fS_{22} )^{-1} \fS_{22} (\lam_j\fI -\fS_{22})^{-1} \fS_{22} \)
\indic(\cF_s)\\
&=O_p\( N^{-2}\).
\endaligned
\]
To prove $(1/T) \sum_{\ell=1}^T [\fA_N(\lam_i)]_{\ell\ell} [\fA_N(\lam_j)]_{\ell\ell} \indic(\cF_s)=O_p(N^{-2})$, it suffices to show that
\begin{eqnarray}\label{eqn:temp}
\max_i\max_\ell \E \lb [\fA_N(\lam_i)]_{\ell\ell}^2 \indic(\cF_s)\rb=  O(N^{-2}).
\end{eqnarray}
Note that
$
[\fA_N]_{\ell\ell} = T^{-1} {\fz_{\ell}^{(B)}}^T \fLa_B^{1/2}
(\lambda\fI -\fS_{22})^{-1} \fLa_B^{1/2} \fz_{\ell}^{(B)}
$
are identically distributed for different $\ell$, hence
\begin{eqnarray*}
\E[\fA_N(\lam_i)]_{\ell\ell}^2 \indic(\cF_s)
&=& \dfrac 1T \sum_{\ell=1}^T \E\lb [\fA_N(\lam_i)]_{\ell\ell}^2 \indic(\cF_s)\rb
\le \dfrac 1T \E \lb \tr\(\fA_N(\lam_i)\)^2 \indic(\cF_s)\rb \\
&=& \dfrac 1T \E \lb \tr\((\lam_i\fI -\fS_{22})^{-1} \fS_{22} \)^2 \indic(\cF_s)\rb
\le \dfrac{C}{N^2}.
\end{eqnarray*}

\end{proof}

%------------------------------------------------%
\begin{proof}[Proof of Lemma \ref{lem:limit_R}]
Recall that $\fZ_A = (\fz_{(1)}, \ldots, \fz_{(r)} )^\mT$. We have
\[
[\fZ_A (\fI + \fA_N(\lam) ) \fZ_A^\mT ]_{ij} = \fz_{(i)}^\mT ( \fI + \fA_N(\lam)) \fz_{(j)} .
\]
Consider the random vector of dimension $K=\dfrac 12 r(r+1)$:
\[
\fW_N=\fW_N(\lam) := \dfrac{1}{\sqrt T} \(\fz_{(i)}^\mT (\fI_N + \fA_N(\lam)) \fz_{(j)} - \tr(\fI + \fA_N(\lam))
\cdot\E(\fz_{(i)}[1] \fz_{(j)}[1]) \)_{1\le i\le j \le r }.
\]
For any  $1\le \ell , \ell' \le K$, there exist two pairs $(i,j)$ and $(i', j')$, $1\le i\le j\le r, \ 1\le i' \le j' \le r$ such that
\[
\fW_N[\ell] = \dfrac{1}{\sqrt T} \(  \fz_{(i)}^\mT (\fI + \fA_N ) \fz_{(j)} - \tr(\fI + \fA_N) \cdot\E(\fz_{(i)}[1] \fz_{(j)}[1]) \) ,
\]
and
\[
\fW_N[\ell'] =\dfrac{1}{\sqrt T} \(   \fz_{(i')}^\mT (\fI + \fA_N ) \fz_{(j')}  - \tr(\fI + \fA_N)
\cdot\E(\fz_{(i')}[1] \fz_{(j')}[1]) \).
\]
By Lemma \ref{lem:limit_A}, we have
\[
\lim_{T\to\infty} \dfrac 1T \sum_{t=1}^\mT \([\fI + \fA_N]_{ii} \)^2 = 1, ~~~~
\textrm{and} ~~~~
\lim_{T\to\infty}  \dfrac 1T \tr(\fI + \fA_N)^2 = 1.
\]
By Corollary 7.1 of \cite{BY08}, the random vector $\fW_N$ converges weakly to a $K$-dimensional Gaussian vector with mean zero and covariance matrix $\fGa_W$ satisfying
$[\fGa_W]_{\ell\ell'} =\rho_{(i,j)(i',j')} $, where
\begin{eqnarray*}
\rho_{(i,j)(i',j')} &=& \E\(\fz_{(i)}[1] \ \fz_{(j)}[1] \ \fz_{(i')}[1] \ \fz_{(j')}[1] \)
- \E\(\fz_{(i)}[1] \ \fz_{(j)}[1] \) \E\(\fz_{(i')}[1] \ \fz_{(j')}[1] \)    \\
&=&  \E\(\fz_1[i] \ \fz_1[j] \ \fz_1[i'] \ \fz_1[j'] \)
- \E\(\fz_1[i] \ \fz_1[j] \) \E\(\fz_1[i'] \ \fz_1[j'] \).
\end{eqnarray*}
The result follows.
\end{proof}

\begin{proof}[Proof of Lemma \ref{lem:limit_R_joint}]
Consider the block diagonal random matrix
\[
\fR_{J_N}=\diag\(\fR_{N}(\lam_1), \ldots, \fR_N(\lam_r)\)
\]
as an $M=r \times \dfrac 12 r(r+1)$
dimensional vector
\begin{eqnarray*}
&& \dfrac{1}{\sqrt T} \(
\(\fz_{(i)}^\mT (\fI +\fA_N(\lam_1)) \fz_{(j)} - \tr(\fI+ \fA_N(\lam_1)) \cdot\E(\fz_{(i)}[1] \fz_{(j)}[1])\)_{1\le i\le j\le r} ~ , \ldots,  \right. \\
&&  \left.  ~~~~~~~~~~~~~~~~~
\(\fz_{(i)}^\mT (\fI +\fA_N(\lam_r)) \fz_{(j)} - \tr(\fI + \fA_N(\lam_r)) \cdot\E(\fz_{(i)}[1] \fz_{(j)}[1])\)_{1\le i\le j\le r}
\).
\end{eqnarray*}
By Assumption A--C and Lemma \ref{lem:limit_A}, the block diagonal random matrix $\fR_{J_N}$ converges weakly to a symmetric Gaussian block diagonal random matrix
\[
\fR_J=\diag(\fR_1, \ldots, \fR_r)
\]
with mean zero and covariance function as follows: for any $1\le m, m' \le r$,
\[
\cov\([\fR_m]_{ij}, [\fR_{m'}]_{i' j' }  \) =
\E\(\fz_1[i] \ \fz_1[j] \ \fz_1[i'] \ \fz_1[j'] \) -
\E\(\fz_1[i] \ \fz_1[j] \) \cdot \E\(\fz_1[i'] \ \fz_1[j']  \).
\]
The conclusion follows.
\end{proof}

%%%%%%%%%-----------------------%%%%%%%%%%%
\section*{S3. Proof of Theorem \ref{thm:clt_ratio} }
\label{subsec:proof_thm:clt_ratio}

\begin{proof}[Proof]
Write
\begin{eqnarray*}
\tr(\fSi_{-r})= \sum_{j=r+1}^{N} \lambda_j, ~~~~~~
\tr(\fS_{-r})= \sum_{j=r+1}^{N} \wh{\lam}_j.
\end{eqnarray*}
To prove Theorem 2, we first show that $ \tr(\fS_{-r})/N$ has a faster convergence rate than $\sqrt T$, that is,
\begin{eqnarray}\label{eqn:delta}
\De:= \sqrt T \(\dfrac 1N \tr(\fS_{-r}) - \dfrac 1N \tr(\fSi_{-r} ) \) \ \pc \ 0.
\end{eqnarray}
Decompose $\De=\De_1 + \De_2$, where
\begin{eqnarray*}
\De_1 = \sqrt T \( \dfrac 1N \tr(\fS_{-r}) - \dfrac 1N \tr(\fS_{22}) \) , ~~~~
\De_2 = \sqrt T \( \dfrac 1N  \tr(\fS_{22}) - \dfrac 1N \tr(\fSi_{-r}) \) .
\end{eqnarray*}
Note that $\De_2 \pc 0$ by Theorem 2.1 of \cite{ZBY2015}.

Next we analyze $\De_1$. Note that
\[
\tr(\fS_N)=\tr(\fS_{11}) + \tr(\fS_{22})
= \sum_{j=1}^r \wh{\lam}_j + \tr(\fS_{-r}) .
\]	
Hence
\[
\De_1 = \sum_{j=1}^r \sqrt T \( \ga_j(\fS_{11}/N ) - \wh{\lam}_j/N \) .
\]
By inequality \eqref{eqn:Weyl_ineq}, we have
\[
\dfrac{r\sqrt T}{N} \ \ga_T(\fS_{22}) \ \le \
\sum_{j=1}^r \sqrt T \( \dfrac{\wh{\lam}_j}{N} - \ga_j(\fS_{11}/N) \)
\ \le \
\dfrac{r\sqrt T}{N} \ \ga_1(\fS_{22}).
\]
By Assumption (A.ii) and Assumption C, we have $\De_1 \pc 0$. Hence, \eqref{eqn:delta} holds.

We can then rewrite the result of Theorem 1 as follows:
\[
\sqrt T \left[
\begin{pmatrix}
\wh{\lam}_1/N  \\
\vdots \\
\wh{\lam}_r/N \\
\tr(\fS_{-r})/N
\end{pmatrix}
-
\begin{pmatrix}
\lam_1/N  \\
\vdots \\
\lam_r/N \\
\tr(\fSi_{-r})/N
\end{pmatrix}
\right]
\ \to \
N(0, \fSi_J),
\]
where $\fSi_J = \diag( \theta_1^2 \si_{\lam_1}^2, \ldots, \theta_r^2 \si_{\lam_r}^2, 0 )$.
For any $k=1, \ldots, r$, by considering the function
\[
f(\fx) = f(x_1, \ldots, x_{r+1}) = \dfrac{x_k}{\sum_{i\neq k, i=1}^r x_i },
\]
and using the Delta method, we get that
\[
\sqrt T
\(
\dfrac{\wh{\lam}_k}{\tr(\wh{\fSi}_N) - \wh{\lam}_k } - \dfrac{\lam_k}{\tr(\fSi)-\lam_k } \)
\ \wk \ N(0, \si_{-k}^2),
\]
where
\begin{eqnarray*}
\si_{-k}^2
&=& \dfrac{\theta_k^2 }{\( \sum_{i\neq k, i=1}^r \theta_i + \int t dH(t) \)^2 } \
\left[
\si_{\lam_k}^2 + \dfrac{\sum_{j\neq k, j=1}^r \theta_j^2 \ \si_{\lam_j}^2  }{\(  \sum_{i\neq k, i=1}^r \theta_i + \int t dH(t) \)^2 }
\right]  .
\end{eqnarray*}
Finally, $\wh{\si_{-k}^2}$ defined in Theorem 2 is a consistent estimator of $\si_{-k}^2$ by Theorem 1 and \eqref{eqn:delta}.
\end{proof}

%%%%%%%%%-----------------------%%%%%%%%%%%
\section*{S4. Some preliminary results for proving Theorems \ref{thm:clt_ev} and \ref{thm:test_tv}}\label{subsec:pre_results}
We first derive some preliminary results in preparation for the proofs of Theorems 3 and 6.

Recall that we write $\fu_k=(\fu_{kA}^\mT, \fu_{kB}^\mT)^\mT$ as the eigenvector of $\fS_N$ corresponding to the eigenvalue $\wh{\lam}_k$, where $\fu_{kA}$ and $\fu_{kB}$ are of dimensions $r$ and $N-r$, respectively.  Also recall that $\wt{\fu}_{kA}=\fu_{kA}/\|\fu_{kA}\|$. Further denote by $\wt{\fe}_{kA}$ the $r$ dimensional vector with 1 in the $k$th coordinate and 0's elsewhere.

%--------------------------------------%
\begin{prop}\label{thm:clt_ev_spike}
Under Assumptions A--C, for $1\le k \le r$, we have
\[
T \(1- \(\wt{\fu}_{kA}[k]\)^2 \) \ \wk \   \sum_{i\neq k, i=1}^{r} \om_{ki} \cdot Z_i^2,
\]
where $\om_{ki}= \dfrac{\theta_k \theta_i}{(\theta_k-\theta_i)^2}$ and $Z_i \stackrel{iid}{\sim} N(0,1)$.
\end{prop}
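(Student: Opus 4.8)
The plan is to reduce the analysis of the sample eigenvector $\fu_k$ of $\fS_N$ to the perturbation of a low-dimensional $r\times r$ eigenvalue problem, following the same matrix-decomposition machinery used in the proof of Theorem \ref{thm:clt_spike}. Writing $\fu_k=(\fu_{kA}^\mT,\fu_{kB}^\mT)^\mT$ and using the block structure $\fS_N=\begin{pmatrix}\fS_{11}&\fS_{12}\\\fS_{21}&\fS_{22}\end{pmatrix}$, the eigen-equation $\fS_N\fu_k=\wh{\lam}_k\fu_k$ gives $\fu_{kB}=(\wh{\lam}_k\fI-\fS_{22})^{-1}\fS_{21}\fu_{kA}$ and $\wt{\fK}_N(\wh{\lam}_k)\fu_{kA}=\wh{\lam}_k\fu_{kA}$, i.e.\ $\fu_{kA}$ is the $k$th eigenvector of $\wt{\fK}_N(\wh{\lam}_k)=\fLa_A^{1/2}\fK_N(\wh{\lam}_k)\fLa_A^{1/2}$. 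So $\wt{\fu}_{kA}[k]^2$ is an explicit function of the entries of $\wt{\fK}_N$. Using the expansion \eqref{eqn:K_R}, $\fK_N(\lam)=\fI/T\cdot\tr(\fI+\fA_N)+T^{-1/2}\fR_N(\lam)$, together with Lemma \ref{lem:limit_A} (which makes the $\tr\fA_N$ terms negligible at the relevant order) and \eqref{KK_hat} (which lets us replace $\wh{\lam}_k$ by $\lam_k$ inside $\fK_N$ up to $o_p(1/N)$), I would write
\[
\wt{\fK}_N(\wh{\lam}_k)=\fLa_A^{1/2}\Big(\tfrac1T\tr(\fI+\fA_N)\,\fI+\tfrac{1}{\sqrt T}\fR_N(\lam_k)\Big)\fLa_A^{1/2}+o_p(\cdot),
\]
so that, after dividing by the scalar $\tfrac1T\tr(\fI+\fA_N)$, the matrix governing $\fu_{kA}$ is $\fLa_A+T^{-1/2}\fLa_A^{1/2}\fR_N(\lam_k)\fLa_A^{1/2}$ up to lower-order terms, and $\fLa_A/N\to\diag(\th_1,\dots,\th_r)$ with distinct entries.

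Next I would apply standard first-order eigenvector perturbation theory to the matrix $\fM_N:=\diag(\th_1,\dots,\th_r)+N^{-1/2}\fE_N$ where $\fE_N=N^{1/2}T^{-1/2}\,(\fLa_A/N)^{1/2}\fR_N(\lam_k)(\fLa_A/N)^{1/2}$; note $N^{1/2}T^{-1/2}\to\sqrt\rho$ is a bounded constant and $\fR_N(\lam_k)\wk\fR$ by Lemma \ref{lem:limit_R}, so $\fE_N$ converges to a Gaussian matrix with $[\fE]_{ij}=\sqrt{\th_i\th_j}\,[\fR]_{ij}$. For the $k$th eigenvector, the first-order perturbation formula gives, for $i\ne k$, the coefficient of $\fe_i$ in $\wt{\fu}_{kA}$ equal to $N^{-1/2}\,[\fE_N]_{ik}/(\th_k-\th_i)+o_p(N^{-1/2})$, whence
\[
1-\big(\wt{\fu}_{kA}[k]\big)^2=\sum_{i\ne k}\big(\wt{\fu}_{kA}[i]\big)^2=\frac1N\sum_{i\ne k,i=1}^r\frac{[\fE_N]_{ik}^2}{(\th_k-\th_i)^2}+o_p(1/N).
\]
Since $N[\fE_N]_{ik}^2\big/\,? $ — more precisely, recalling $N^{1/2}T^{-1/2}\to\sqrt\rho$ and that the relevant normalization below will absorb it — multiplying by $T$ and using $N/T\to\rho$ gives
\[
T\big(1-(\wt{\fu}_{kA}[k])^2\big)=\sum_{i\ne k,i=1}^r\frac{\th_i\th_k}{(\th_k-\th_i)^2}\,\big([\fR_N(\lam_k)]_{ik}\big)^2+o_p(1),
\]
and since the off-diagonal entries $[\fR]_{ik}$, $i\ne k$, are independent standard normal by Lemma \ref{lem:limit_R}, setting $Z_i:=[\fR]_{ik}$ yields the claimed limit $\sum_{i\ne k}\om_{ki}Z_i^2$ with $\om_{ki}=\th_k\th_i/(\th_k-\th_i)^2$.

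The main obstacle is controlling the error terms carefully enough to justify the first-order eigenvector expansion at the precise rate $1/T$: one must show that the second-order perturbation contributions to $1-(\wt{\fu}_{kA}[k])^2$ are $o_p(1/T)$, that replacing $\wh{\lam}_k$ by $\lam_k$ in $\wt{\fK}_N$ and dropping the $\tr(\fI+\fA_N)/T$ correction do not affect the limit at order $1/T$ (this is where \eqref{KK_hat}, \eqref{eqn:limit_evalues} and Lemma \ref{lem:limit_A} are essential), and that $\fu_{kA}\ne 0$ with $\|\fu_{kA}\|\to 1$ so that the normalization $\wt{\fu}_{kA}=\fu_{kA}/\|\fu_{kA}\|$ is well-defined and does not itself contribute at this order. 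A clean way to handle all of this uniformly is to work on the high-probability event $\cF_s$ of \eqref{eq:S_22_bd} where $\|\fS_{22}\|$ is bounded, so that all resolvents $(\lam_k\fI-\fS_{22})^{-1}$ and their derivatives in $\lam$ are $O(1/N)$ with all needed moments, and then a deterministic perturbation bound plus Lemma \ref{lem:limit_R} delivers the weak limit; the joint convergence over the off-diagonal entries is already supplied by Lemma \ref{lem:limit_R}.
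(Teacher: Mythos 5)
Your proposal is correct and follows essentially the same route as the paper: reduce to the $r\times r$ eigen-problem for $\wt{\fK}_N(\wh{\lam}_k)$, expand via \eqref{eqn:K_R} with Lemmas \ref{lem:limit_A}--\ref{lem:limit_R} and \eqref{KK_hat}, and extract the off-diagonal components of $\wt{\fu}_{kA}$ at order $T^{-1/2}$ so that $T\sum_{i\neq k}(\wt{\fu}_{kA}[i])^2\to\sum_{i\neq k}\om_{ki}[\fR]_{ik}^2$. The only difference is presentational: where you invoke standard first-order eigenvector perturbation theory, the paper carries out that step explicitly by left-multiplying the expansion by the operator $\cO_{N,k}=\sum_{i\neq k}\frac{\lam_k}{\lam_k-\lam_i}\wt{\fe}_{iA}\wt{\fe}_{iA}^\mT$ and using the identity $1=\langle\wt{\fu}_{kA},\wt{\fe}_{kA}\rangle^2+\|\fW_k^{\perp}\wt{\fu}_{kA}\|^2$, which simultaneously handles the consistency and error-control issues you flag at the end.
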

\begin{rmk}\label{cor:clt_ev_spike}
As a corollary, we have
\[
N \(1-|\wt{\fu}_{kA}[k]| \) = N \( 1-\dfrac{|\fu_k[k]|}{\|\fu_{kA} \|} \)   \  \wk \
\dfrac{\rho}{2} \sum_{i\neq k,i=1}^r \om_{ki} \cdot Z_i^2.
\]
\end{rmk}

%--------------------------------------%
\begin{prop}\label{thm:clt_spike_spike}
Under Assumptions A--C,
\begin{enumerate}[(i)]
\item for $1\le k\le r$, we have
\[
\sqrt{T} (\wt{\fu}_{kA} -\wt{\fe}_{kA} ) \ \wk \ N(0,\fSi_k),
\]
where
\[
\fSi_{k}= \sum_{i\neq k,i=1}^r  \om_{ki} \wt{\fe}_{iA} \wt{\fe}_{iA}^\mT,
	~~~~~~
{\textrm and}~~~~ \om_{ki}= \dfrac{\theta_k \theta_i}{(\theta_k-\theta_i)^2} ;
\]
\item for any fixed $r$-dimensional vectors $\fc_k$, $k=1,\ldots,r$,
if there exist $i\neq j$ such that $\fc_i[j]\neq \fc_j[i]$, then
\[
\sqrt{T} \sum_{k=1}^r \fc_k^\mT (\wt{\fu}_{kA} - \wt{\fe}_{kA}) \ \wk \
N\(0, \sum_{k=1}^{r-1} \sum_{i=k+1}^r \om_{ki} (\fc_k[i] - \fc_i[k] )^2 \) ;
\]
\item for $1\le \ell, k \le r$, the $\ell$th principal eigenvalue $\sqrt T (\wh{\lam}_{\ell}/\lam_\ell -1 )$ and the $k$th principal eigenvector $\sqrt T (\wt{\fu}_{kA} - \wt{\fe}_{kA})$ are asymptotically independent.
\end{enumerate} 	
\end{prop}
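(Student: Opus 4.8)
The plan is to reduce all three parts to a single first-order (Bahadur-type) expansion of each normalized sub-eigenvector $\wt{\fu}_{kA}$, and then to read off the conclusions from Lemmas~\ref{lem:limit_R} and~\ref{lem:limit_R_joint}. As in the proof of Theorem~\ref{thm:clt_spike}, $\wt{\fu}_{kA}$ is the unit eigenvector (with $\wt{\fu}_{kA}[k]\ge0$) of $\wt{\fK}_N(\wh{\lam}_k)$ belonging to its $k$th largest eigenvalue $\wh{\lam}_k$, and by~\eqref{eqn:K_R} I would write $\wt{\fK}_N(\wh{\lam}_k)=c_N\fLa_A+\fG_N$, where $c_N:=T^{-1}\tr(\fI+\fA_N(\wh{\lam}_k))=1+O_p(N^{-1})$ by Lemma~\ref{lem:limit_A} and $\fG_N:=T^{-1/2}\fLa_A^{1/2}\fR_N(\wh{\lam}_k)\fLa_A^{1/2}$, so that $[\fG_N]_{ij}=T^{-1/2}\sqrt{\lam_i\lam_j}\,[\fR_N(\wh{\lam}_k)]_{ij}=O_p(\sqrt N)$ by Lemma~\ref{lem:limit_R}.

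First I would record consistency of the eigenvector direction: dividing this decomposition by $\lam_k$ and using~\eqref{eqn:limit_evalues} and Assumption~A, the matrix $\wt{\fK}_N(\wh{\lam}_k)/\lam_k$ converges entrywise in probability to $\diag(\theta_1/\theta_k,\ldots,\theta_r/\theta_k)$, which is diagonal with distinct entries and $k$th entry $1$; continuity of the eigenvector of a simple eigenvalue together with the sign convention then gives $\wt{\fu}_{kA}\pc\wt{\fe}_{kA}$. Next I would write the identity $\wt{\fK}_N(\wh{\lam}_k)\wt{\fu}_{kA}=\wh{\lam}_k\wt{\fu}_{kA}$ coordinatewise: for $i\neq k$, since $\wh{\lam}_k-c_N\lam_i\asymp N$ is eventually nonzero,
\[
\wt{\fu}_{kA}[i]=\frac{[\fG_N]_{ik}\,\wt{\fu}_{kA}[k]+\sum_{j\neq k}[\fG_N]_{ij}\,\wt{\fu}_{kA}[j]}{\wh{\lam}_k-c_N\lam_i},
\]
and substituting $\wt{\fu}_{kA}[k]=1+o_p(1)$, $\wt{\fu}_{kA}[j]=o_p(1)$ for $j\neq k$, and $[\fG_N]_{ij}=O_p(\sqrt N)$ reduces the numerator to $[\fG_N]_{ik}+o_p(\sqrt N)$. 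Combining this with $\sqrt{\lam_i\lam_k}/(\wh{\lam}_k-c_N\lam_i)\pc\sqrt{\theta_i\theta_k}/(\theta_k-\theta_i)$ and with $[\fR_N(\wh{\lam}_k)]_{ik}=[\fR_N(\lam_k)]_{ik}+o_p(1)$ — valid because for $i\neq k$ one has $[\fR_N(\lam)]_{ik}=T^{-1/2}\fz_{(i)}^\mT\fz_{(k)}+O_p(N^{-1/2})$ with the remainder uniform over $\lam\asymp N$ on the event $\cF_s$ (using $\|\fA_N(\lam)\|=O_p(N^{-1})$ there), so its limit does not depend on the random argument — I reach the master expansion
\[
\sqrt T\,\wt{\fu}_{kA}[i]=\frac{\sqrt{\theta_i\theta_k}}{\theta_k-\theta_i}\,[\fR_N(\lam_k)]_{ik}+o_p(1),\quad i\neq k,\qquad \sqrt T\,(\wt{\fu}_{kA}[k]-1)=o_p(1),
\]
the second equality from $\wt{\fu}_{kA}[k]^2=1-\sum_{i\neq k}\wt{\fu}_{kA}[i]^2=1-O_p(T^{-1})$.

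Part~(i) follows at once: by Lemma~\ref{lem:limit_R}, $([\fR_N(\lam_k)]_{ik})_{i\neq k}$ is asymptotically a vector of i.i.d.\ $N(0,1)$ variables, so $\sqrt T(\wt{\fu}_{kA}-\wt{\fe}_{kA})\wk N(0,\fSi_k)$ with $\fSi_k$ diagonal, $k$th entry $0$ and $i$th entry $\theta_i\theta_k/(\theta_k-\theta_i)^2=\om_{ki}$. For part~(ii) I would sum the master expansion over $k$: the coordinate-$k$ term of $\fc_k^\mT(\wt{\fu}_{kA}-\wt{\fe}_{kA})$ is $o_p(T^{-1/2})$, and for each unordered pair $\{k,i\}$ the two surviving contributions $\fc_k[i]\,\sqrt T\,\wt{\fu}_{kA}[i]$ and $\fc_i[k]\,\sqrt T\,\wt{\fu}_{iA}[k]$ involve the \emph{same} limiting $N(0,1)$ variable — since $[\fR_N(\lam)]_{ik}=[\fR_N(\lam)]_{ki}$ and its limit does not depend on $\lam$, consistent with $\cov([\fR_m]_{ik},[\fR_{m'}]_{ik})=1=\var$ in Lemma~\ref{lem:limit_R_joint} — but with opposite-signed coefficients $\pm\sqrt{\theta_i\theta_k}/(\theta_k-\theta_i)$, which combine to $(\fc_k[i]-\fc_i[k])\sqrt{\theta_i\theta_k}/(\theta_k-\theta_i)$; as the $N(0,1)$ variables attached to distinct pairs are independent (Lemma~\ref{lem:limit_R_joint}), the sum is $N(0,\sum_{k=1}^{r-1}\sum_{i=k+1}^r\om_{ki}(\fc_k[i]-\fc_i[k])^2)$, which is non-degenerate precisely under the stated hypothesis. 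For part~(iii), \eqref{eqn:exp_CLT_evalue} gives $\sqrt T(\wh{\lam}_\ell/\lam_\ell-1)=[\fR_N(\lam_\ell)]_{\ell\ell}+o_p(1)$, whereas $\sqrt T(\wt{\fu}_{kA}-\wt{\fe}_{kA})$ is asymptotically a linear image of $\{[\fR_N(\lam_k)]_{ik}:i\neq k\}$; Lemma~\ref{lem:limit_R_joint} makes the whole family jointly Gaussian in the limit with $\cov([\fR_\ell]_{\ell\ell},[\fR_k]_{ik})=0$ for every $i\neq k$, and uncorrelatedness plus joint Gaussianity delivers the asymptotic independence.

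The hard part will be making the eigenvector consistency and the $o_p$-order bookkeeping in the master expansion fully rigorous — done directly from the entrywise convergence above, with no circular appeal to Proposition~\ref{thm:clt_ev_spike} (which is itself a corollary of the master expansion) — and justifying the replacement $[\fR_N(\wh{\lam}_k)]_{ik}=[\fR_N(\lam_k)]_{ik}+o_p(1)$ for the \emph{random} argument $\wh{\lam}_k$. The one genuinely structural point, on which part~(ii) hinges, is exactly that for $i\neq k$ the limiting off-diagonal entry of $\fR_N(\lam)$ is the same for every principal eigenvalue $\lam$ (equivalently $[\fR_N(\lam)]_{ik}=T^{-1/2}\fz_{(i)}^\mT\fz_{(k)}+o_p(1)$): this is what forces the coefficients $\fc_k[i]$ and $\fc_i[k]$ to collapse into the single quantity $\fc_k[i]-\fc_i[k]$.
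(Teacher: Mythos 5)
Your proposal is correct and follows essentially the same route as the paper: your master expansion is exactly the paper's equation \eqref{eqn:u_kA-e_kA_final}, and parts (i)--(iii) are then read off from Lemmas \ref{lem:limit_R} and \ref{lem:limit_R_joint} just as in the paper's proof of Proposition \ref{thm:clt_spike_spike}. The only differences are presentational: you derive the expansion coordinatewise from the eigenvector equation where the paper left-multiplies by the partial resolvent $\cO_{N,k}$, and you justify replacing the random argument $\wh{\lam}_k$ by $\lam_k$ through the explicit form of the off-diagonal entries of $\fR_N$ rather than through the paper's resolvent-difference bound on $\fK_N(\wh{\lam}_k)-\fK_N(\lam_k)$; the structural point you flag for part (ii) --- that $[\fR_N(\lam)]_{ik}$ has the same limit for every principal $\lam$ --- is precisely what Lemma \ref{lem:limit_R_joint} encodes via the unit covariance of off-diagonal entries across blocks.
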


\begin{rmk} The conclusion in (i) coincides with  Theorem 3.2 in \cite{WF17}, which is proved under the sub-Gaussian assumption.
\end{rmk}

%--------------------------------------%
\begin{prop}\label{thm:clt_ev_norm}
Under Assumptions A--C, for $1\le \ell\le r$, we have
\[
\sqrt T \(\lambda_k\(1-\|\fu_{kA}\|^2\)-
\dfrac{1}{T} \sum_{j=r+1}^N \dfrac{\wh{\lam}_j}{(1-\wh{\lam}_j/\lambda_k)^2} \)
\ \wk \ N(0,\si_{kA}^2),
\]
where $\si_{kA}^2=  \(\E(\fz_1[k])^4 -3 \) \cdot \rho^2
\(\int xdF(x)\)^2 + 2\rho\int x^2 dF(x) $
and
the function $F(\cdot)$ is a distribution function whose Stieltjes transform, $m_F$, is the unique solution in the set $\{m_F\in \bC^+: -(1-\rho)/z + \rho m_F \in \bC^+ \}$ to the equation
\[
m_F=\int \dfrac{dH(\tau)}{\tau(1-\rho-\rho z m_F)-z},
\]
where $H$ is given in Assumption (A.ii).
\end{prop}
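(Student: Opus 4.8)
The plan is to express $\lam_k(1-\|\fu_{kA}\|^2)$ as a quadratic form in the single coordinate vector $\fz_{(k)}$, subtract the data-dependent centering in the statement, and apply a CLT for quadratic forms, identifying the limiting variance through the Stieltjes transform of $F$. Since $\fu_k$ is a unit vector, $1-\|\fu_{kA}\|^2=\|\fu_{kB}\|^2$; the second block row of $\fS_N\fu_k=\wh\lam_k\fu_k$ gives (for $N$ large, $\wh\lam_k$ not an eigenvalue of $\fS_{22}$ by \eqref{eq:S_22_bd} and $\wh\lam_k\asymp N$) $\fu_{kB}=(\wh\lam_k\fI-\fS_{22})^{-1}\fS_{21}\fu_{kA}$, hence $\|\fu_{kB}\|^2=\fu_{kA}^\mT\fS_{12}(\wh\lam_k\fI-\fS_{22})^{-2}\fS_{21}\fu_{kA}$. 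By Propositions \ref{thm:clt_ev_spike} and \ref{thm:clt_spike_spike} one has $\wt\fu_{kA}=\wt\fe_{kA}+O_p(T^{-1/2})$ and $\|\fu_{kA}\|^2=1+O_p(T^{-1})$, while a direct computation shows the $(k,k)$ entry of $\fS_{12}(\cdot)\fS_{21}$ is $O_p(N^{-1})$ and its off-diagonal entries are $O_p(N^{-3/2})$; these let me replace $\fu_{kA}$ by $\wt\fe_{kA}$ at cost $o_p(T^{-1/2})$, so that $\lam_k(1-\|\fu_{kA}\|^2)=\frac{\lam_k^2}{T^2}\fz_{(k)}^\mT\fZ_B^\mT\fLa_B^{1/2}(\wh\lam_k\fI-\fS_{22})^{-2}\fLa_B^{1/2}\fZ_B\fz_{(k)}+o_p(T^{-1/2})$. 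Applying $\fA^\mT(z\fI-\fA\fA^\mT)^{-1}\fA=z(z\fI-\fA^\mT\fA)^{-1}-\fI$ with $\fA=T^{-1/2}\fLa_B^{1/2}\fZ_B$ and differentiating in $z$ turns the resolvent-squared factor into $(z\fI-\ul{\fS}_{22})^{-2}\ul{\fS}_{22}$, giving $\lam_k(1-\|\fu_{kA}\|^2)=\frac{\lam_k^2}{T}\fz_{(k)}^\mT(\wh\lam_k\fI-\ul{\fS}_{22})^{-2}\ul{\fS}_{22}\fz_{(k)}+o_p(T^{-1/2})$, a quadratic form in $\fz_{(k)}$ whose kernel depends only on $\ul{\fS}_{22}$, which is independent of $\fz_{(k)}$.

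Next I would remove the remaining dependence on $\wh\lam_k$. Since $\wh\lam_k\asymp N$ and $\|\ul{\fS}_{22}\|=O_p(1)$ on $\cF_s$, expanding $(\wh\lam_k\fI-\ul{\fS}_{22})^{-2}\ul{\fS}_{22}$ in powers of $\ul{\fS}_{22}/\wh\lam_k$ leaves only the leading term $\wh\lam_k^{-2}\ul{\fS}_{22}$ after the $\sqrt T$ scaling; writing $\wh\lam_k/\lam_k=1+\delta_k$ with $\delta_k=T^{-1}(\|\fz_{(k)}\|^2-T)+o_p(T^{-1/2})$ (from \eqref{eqn:exp_CLT_evalue} and Lemma \ref{lem:limit_A}) and Taylor-expanding in $\delta_k$ yields $\lam_k(1-\|\fu_{kA}\|^2)=\frac1T\fz_{(k)}^\mT\ul{\fS}_{22}\fz_{(k)}-\frac{2\tr\fS_{22}}{T}\delta_k+o_p(T^{-1/2})$. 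On the centering side, a Weyl-type comparison of $\wh\lam_{r+1},\dots,\wh\lam_N$ with the eigenvalues of $\fS_{22}$, using $\tr\fS_N=\tr\fS_{11}+\tr\fS_{22}$ and \eqref{eqn:Weyl_ineq}, shows that $\frac1T\sum_{j=r+1}^N\wh\lam_j/(1-\wh\lam_j/\lam_k)^2$ agrees with $\frac1T\tr\fS_{22}=\frac1T\tr\ul{\fS}_{22}$ up to $o_p(T^{-1/2})$ (leading discrepancy $T^{-1}\sum_{k'=1}^r(\wh\lam_{k'}-\ga_{k'}(\fS_{11}))=O_p(T^{-1})$). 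Writing $\fz_{(k)}^\mT\ul{\fS}_{22}\fz_{(k)}-\tr\ul{\fS}_{22}=\frac1T\sum_t(\fz_t[k]^2-1)W_t+\frac1T\sum_{t\ne s}\fz_t[k]\fz_s[k]W_{ts}$ with $W_t=\sum_{j>r}\lam_j\fz_t[j]^2$, $W_{ts}=\sum_{j>r}\lam_j\fz_t[j]\fz_s[j]$, $\frac1T\sum_tW_t=\tr\ul{\fS}_{22}$, and replacing $W_t$ and $\tr\fS_{22}$ by the mean $L:=\sum_{j>r}\lam_j$ (remainders $o_p(T^{-1/2})$ after scaling), the $+L$-term and the $-2L\delta_k$-term partially cancel and
\[
\sqrt T\Big(\lam_k(1-\|\fu_{kA}\|^2)-\frac1T\sum_{j=r+1}^N\frac{\wh\lam_j}{(1-\wh\lam_j/\lam_k)^2}\Big)=-\frac{L}{T^{3/2}}\sum_{t=1}^T(\fz_t[k]^2-1)+\frac{1}{T^{3/2}}\sum_{t\ne s}\fz_t[k]\fz_s[k]W_{ts}+o_p(1).
\]

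Conditioning on $\fZ_B$, the right-hand side is a centered quadratic form $\fz_{(k)}^\mT\fB\fz_{(k)}-\tr\fB$ in the i.i.d. entries $\fz_1[k],\dots,\fz_T[k]$ (diagonal of $\fB$ equal to $-L/T^{3/2}$, off-diagonal $W_{ts}/T^{3/2}$), to which a martingale CLT in $t$ (or Corollary 7.1 of \cite{BY08}) applies under the fourth-moment hypothesis; its variance equals $(\E\fz_1[k]^4-3)\sum_t\fB_{tt}^2+2\tr\fB^2=(\E\fz_1[k]^4-1)L^2/T^2+\frac{2}{T^3}\sum_{t\ne s}W_{ts}^2$. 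Since $L/T\to\rho\int x\,dH$, $\frac1T\sum_{j>r}\lam_j^2\to\rho\int x^2\,dH$, and the Silverstein--Bai equation for $m_F$ yields the moment identities $\int x\,dF=\int x\,dH$ and $\int x^2\,dF=\int x^2\,dH+\rho(\int x\,dH)^2$, this variance converges to $(\E\fz_1[k]^4-3)\rho^2(\int x\,dF)^2+2\rho\int x^2\,dF=\si_{kA}^2$, which is the claim.

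The main obstacle is the bookkeeping in the second step: the three substitutions — replacing $\wh\lam_k$ by $\lam_k$ inside the resolvent, replacing the feasible centering by $\frac1T\tr\fS_{22}$, and replacing $W_t$ by $L$ — each contribute at the exact order $T^{-1/2}$, so none can be discarded in isolation; one must verify that with the precise centering of the statement they collapse to the two uncorrelated sums above and that the leftover is genuinely $o_p(T^{-1/2})$. Carrying this out uniformly on $\cF_s=\{\|\fS_{22}\|\le C_s\}$ with only $\sup_N\max_\ell\E\fz_1[\ell]^4<\infty$ (so the various quadratic-form remainders are controlled by the truncation and moment bounds already used for Theorem \ref{thm:clt_spike} and Lemma \ref{lem:limit_A}) is where essentially all the technical effort lies; the concluding CLT is then routine.
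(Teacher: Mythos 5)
Your proposal reaches the correct limit and, as far as I can check, every substitution you make is legitimate at the required $o_p(T^{-1/2})$ order; but the route is genuinely different from the paper's. The paper stays with the $r\times r$ matrices: it writes $\lam_k(1-\|\fu_{kA}\|^2)$ in terms of the centered sesquilinear forms $[\fR_N(\lam_k)]_{kk}$ and $[\wt\fR_N(\lam_k)]_{kk}$ (the latter built from the \emph{intact} resolvent $(\fI-\lam_k^{-1}\fS_{22})^{-2}$), invokes Theorem 7.1 of \cite{BY08} and the joint CLT of \cite{WSY2014} for the pair, and computes the $2\times 2$ covariance $\fOm_k$ through limits of traces and Hadamard products of $\wt\fC_N,\wt\fD_N$ (Lemma \ref{lem:limit_double_R}, which needs the rank-one perturbation identities and a uniform-integrability argument); the variance $\si_{kA}^2$ then comes out as the quadratic form of $\fOm_k$ with the vector $(-2\rho\int x\,dF,\,1)$, the first coordinate encoding the correlation with $\wh\lam_k$. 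You instead push through to the $T\times T$ companion matrix, use $\lam_k\asymp N$ to discard all but the first term of the resolvent expansion, and collapse everything to a single explicit centered quadratic form in $\fz_{(k)}$ with kernel depending only on $\fZ_B$; the interaction with $\wh\lam_k$ is resolved by the explicit cancellation between the $+L$ and $-2L\delta_k$ terms rather than by a joint CLT, and the variance is computed from moments of $H$ and converted to moments of $F$ via the Marchenko--Pastur moment identities $\int x\,dF=\int x\,dH$ and $\int x^2dF=\int x^2dH+\rho(\int x\,dH)^2$ (I verified this reproduces $\si_{kA}^2$ exactly). What the paper's route buys is that the external joint-sesquilinear-form CLT absorbs all the delicate correlation bookkeeping and would survive settings where the resolvent corrections are not negligible; what yours buys is a more elementary, essentially self-contained argument whose only nontrivial probabilistic input is a martingale CLT for a quadratic form with independent entries — at the price of the three simultaneous $O(T^{-1/2})$-order substitutions you correctly identify as the crux, each of which does check out under the stated fourth-moment and truncation bounds.
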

%--------------------------------------------------%
\begin{rmk}\label{cor:clt_ev_norm}
Under Assumptions A--C,
Proposition \ref{thm:clt_ev_norm} can be rewritten as
\[
N^{3/2} \( 1-\|\fu_{kA}\|^2 - \dfrac{1}{T\lambda_k} \sum_{j=r+1}^N \dfrac{\wh{\lam}_j}{(1-\wh{\lam}_j/\lambda_k)^2}  \)  \ \wk \ N(0,\rho \sigma_{kA}^2/\theta_k^2),
\]
or
\begin{eqnarray}\label{eqn:limit_u_kA}
N^{3/2} \( 1-\|\fu_{kA}\| - \dfrac{1}{2T\lambda_k} \sum_{j=r+1}^N \dfrac{\wh{\lam}_j}{(1-\wh{\lam}_j/\lambda_k)^2}  \)  \ \wk \ N(0,\rho \sigma_{kA}^2/(4\theta_k^2)).
\end{eqnarray}
In particular,
$N\(1-\|\fu_{kA}\| \) \pc  \dfrac{\rho}{2\theta_k} \int xdF(x)$.
\end{rmk}

%---------------------------------
\subsection*{A. Proof of Proposition \ref{thm:clt_ev_spike}}
\begin{proof}
By definition,
	\[
	\fS_N \fu_k =\wh{\lam}_k \fu_k.
	\]
Writing $\fu_k=(\fu_{kA}^\mT, \fu_{kB}^\mT)^\mT$ gives us that
	\begin{eqnarray}
	\fS_{11} \fu_{kA} + \fS_{12} \fu_{kB} &=& \wh{\lam}_k \fu_{kA} \label{eqn:def_u_kA} , \\
	\fS_{21} \fu_{kA} + \fS_{22} \fu_{kB} &=& \wh{\lam}_k \fu_{kB} \label{eqn:def_u_kB} .
	\end{eqnarray}
Solving \eqref{eqn:def_u_kB} for $\fu_{kB}$ yields
\begin{eqnarray} \label{eqn:u_kB_exp}
\fu_{kB}=(\wh{\lam}_k\fI-\fS_{22})^{-1}\fS_{21} \fu_{kA}.
\end{eqnarray}
Replacing $\fu_{kB}$ in \eqref{eqn:def_u_kA} with \eqref{eqn:u_kB_exp} gives
\[
\lb \fS_{11} +\fS_{12} (\wh{\lam}_k\fI -\fS_{22})^{-1} \fS_{21} \rb \fu_{kA} =\wh{\lam}_k \fu_{kA},
\]
in other words,
$\wt{\fK}_N(\wh{\lam}_k) \fu_{kA} = \wh{\lam}_k \fu_{kA}$.

To prove Proposition \ref{thm:clt_ev_spike}, we first show that $\wt{\fu}_{kA}-\wt{\fe}_{kA} \pc 0$, where
$\wt{\fu}_{kA}=\fu_{kA}/\|\fu_{kA}\|$.
It follows from the definition of $\fK_N$ that
\begin{eqnarray*}
\wt{\fu}_{kA} &=& \dfrac{1}{\wh{\lam}_k} \wt{\fK}_N(\wh{\lam}_k) \wt{\fu}_{kA}  \\
&=& \dfrac{1}{\wh{\lam}_k} \wt{\fK}_N(\lam_k) \wt{\fu}_{kA}
+\dfrac{1}{\wh{\lam}_k} \fLa_A^{1/2} \(\fK_N(\wh{\lam}_k)-\fK_N(\lambda_k) \) \fLa_A^{1/2} \ \wt{\fu}_{kA} \\
&=&  \dfrac{1}{\wh{\lam}_k} \wt{\fK}_N(\lambda_k) \wt{\fu}_{kA}
+ \dfrac{1}{\wh{\lam}_k^2} \(1-\dfrac{\wh{\lam}_k}{\lambda_k} \) \fLa_A^{1/2} \fM_N \fLa_A^{1/2} \wt{\fu}_{kA},
\end{eqnarray*}
where
\[
\fM_N=\dfrac{1}{T^2} \fZ_A \fX_B^\mT (\fI - \wh{\lam}_k^{-1} \fS_{22})^{-1} (\fI-\lam_k^{-1}\fS_{22})^{-1} \fX_B \fZ_A^\mT .
\]
Because $\|\fS_{22}\|$ and $\|(1/T)Z_AZ_A^\mT\|$ are both $O_p(1)$, and  $\lam_k\asymp N$ and $\wh{\lam}_k=O_p(N)$, we have $\|\fM_N\| = O_p(1)$.
Thus, by Theorem 1 and Assumption A, we get
\[
\wt{\fu}_{kA} = \dfrac{1}{\wh{\lam}_k} \wt{\fK}_N(\lam_k) \wt{\fu}_{kA} + O_p(N^{-3/2}).
\]
For the first term, consider the following decomposition:
\begin{eqnarray*}
	\dfrac{1}{\wh{\lam}_k} \wt{\fK}_N(\lambda_k) \wt{\fu}_{kA}
	&=& \dfrac{1}{\wh{\lam}_k} \fLa_A^{1/2} \fK_N(\lambda_k) \fLa_A^{1/2} \wt{\fu}_{kA} \\
	&=& \dfrac{1}{\wh{\lam}_k} \fLa_A^{1/2} \( \dfrac{1}{\sqrt T} \fR_N(\lambda_k) + \dfrac 1T \tr(\fI+\fA_N(\lambda_k)) \) \fLa_A^{1/2} \wt{\fu}_{kA}  \\
	&=& \dfrac{1}{\wh{\lam}_k} \dfrac{1}{\sqrt T} \fLa_A^{1/2} \fR_N(\lambda_k) \fLa_A^{1/2} \wt{\fu}_{kA}
	+\dfrac{\fLa_A}{\wh{\lam}_k} \wt{\fu}_{kA}
	+\dfrac 1T \tr \fA_N \cdot \dfrac{\fLa_A}{\wh{\lam}_k} \wt{\fu}_{kA} \\
&=& \dfrac{1}{\wh{\lam}_k} \dfrac{1}{\sqrt T} \fLa_A^{1/2} \fR_N(\lambda_k) \fLa_A^{1/2} \wt{\fu}_{kA}
+\dfrac{\fLa_A}{\wh{\lam}_k} \wt{\fu}_{kA}
+ O_p(1/N),
\end{eqnarray*}
where the last step comes from Assumption A and Lemma \ref{lem:limit_A}.
Hence,
\begin{eqnarray} \label{eqn:u_kA_0}
\wt{\fu}_{kA} = \dfrac{1}{\wh{\lam}_k \sqrt T} \fLa_A^{1/2} \fR_N(\lam_k) \fLa_A^{1/2} \wt{\fu}_{kA} + \dfrac{\fLa_A}{\wh{\lam}_k} \wt{\fu}_{kA} + O_p(1/N).
\end{eqnarray}
Subtracting $(\fLa_A/\lam_k)\wt{\fu}_{kA}$ on both sides of \eqref{eqn:u_kA_0} yields
\begin{eqnarray} \label{eqn:u_kA_1}
\(\fI - \dfrac{\fLa_A}{\lam_k} \) \wt{\fu}_{kA}
&=& \dfrac{1}{\wh{\lam}_k \sqrt T} \ \fLa_A^{1/2} \fR_N(\lam_k) \fLa_A^{1/2} \wt{\fu}_{kA} \nonumber \\
&& ~~~
+ \dfrac{\fLa_A}{\wh{\lam}_k} \(1-\dfrac{\wh{\lam}_k}{\lam_k} \)  \wt{\fu}_{kA}
+ O_p(1/N).
\end{eqnarray}
Further define
\[
\cO_{N,k} = \sum_{i\neq k,i=1}^r \dfrac{\lambda_k}{\lambda_k-\lambda_i} \ \wt{\fe}_{iA} \wt{\fe}_{iA}^\mT.
\]
It is easy  to see that
\[
\cO_{N,k}\(\fI-\dfrac{\fLa_A}{\lambda_k} \)
=\sum_{i\neq k,i=1}^r \wt{\fe}_{iA} \wt{\fe}_{iA}^\mT
=\fI  -\wt{\fe}_{kA} \wt{\fe}_{kA}^\mT .
\]
Left-multiplying $\cO_{N,k}$ on both sides of \eqref{eqn:u_kA_1} yields
\begin{eqnarray} \label{eqn:u_kA_2}
 \wt{\fu}_{kA} - \langle \wt{\fu}_{kA}, \wt{\fe}_{kA} \rangle \wt{\fe}_{kA}
&=& \dfrac{1}{\wh{\lam}_k \sqrt T} \cO_{N,k} \fLa_A^{1/2} \fR_N(\lam_k) \fLa_A^{1/2} \wt{\fu}_{kA }  \nonumber\\
&& ~~~ + \(1-\dfrac{\wh{\lam}_k}{\lam_k}\) \dfrac{1}{\wh{\lam}_k} \cO_{N,k} \fLa_A \wt{\fu}_{kA}
+ O_p(1/N).
\end{eqnarray}
By Lemma \ref{lem:limit_R} and Theorem 1, we get
\[
\wt{\fu}_{kA} - \langle \wt{\fu}_{kA}, \wt{\fe}_{kA} \rangle \wt{\fe}_{kA} = O_P(1/\sqrt{T}).
\]
It follows that $\wt{\fu}_{kA}-\wt{\fe}_{kA}\pc 0$.

Replacing
$\wh{\lam}_k$ and $\wt{\fu}_{kA}$ on the right hand side of equation \eqref{eqn:u_kA_0} with $\lam_k$ and $\wt{\fe}_{kA}$, respectively, yields
\begin{eqnarray*}
\wt{\fu}_{kA} - \wt{\fe}_{kA}
&=& \dfrac{1}{\lam_k \sqrt T} \fLa_A^{1/2} \fR_N(\lam_k) \fLa_A^{1/2} \wt{\fe}_{kA}
+ \(\dfrac{\lam_k}{\wh{\lam}_k} -1 \) \dfrac{\fLa_A}{\lam_k} \wt{\fe}_{kA}  \\
&& ~~~ + \dfrac{\fLa_A}{\lam_k} \(\wt{\fu}_{kA} - \wt{\fe}_{kA} \)
+\(\dfrac{\fLa_A}{\lam_k} - \fI \) \wt{\fe}_{kA} + o_p(1/\sqrt T).
\end{eqnarray*}
Rewrite the above equation as
\begin{eqnarray*}
\sqrt T \(\fI - \dfrac{\fLa_A}{\lam_k} \)	\( \wt{\fu}_{kA} - \wt{\fe}_{kA}  \)
&=& \dfrac{1}{\lam_k} \fLa_A^{1/2} \fR_N(\lam_k) \fLa_A^{1/2} \wt{\fe}_{kA}
+ \sqrt T  \(\dfrac{\lam_k}{\wh{\lam}_k} -1 \) \dfrac{\fLa_A}{\lam_k} \wt{\fe}_{kA} \\
&& ~~~ + \sqrt T \(\dfrac{\fLa_A}{\lam_k} - \fI \) \wt{\fe}_{kA} + o_p(1).
\end{eqnarray*}
Multiplying $\cO_{N,k}$ on both sides yields
\begin{eqnarray} \label{eqn:u_kA-e_kA}
\sqrt T \(\wt{\fu}_{kA} - \wt{\fe}_{kA} \)
&=& \dfrac{1}{\lam_k} \cO_{N,k} \fLa_A^{1/2} \fR_N(\lam_k) \fLa_A^{1/2} \wt{\fe}_{kA}
\nonumber \\
&& ~~~ + \sqrt T \(\langle \wt{\fu}_{kA}, \wt{\fe}_{kA} \rangle -1 \) \wt{\fe}_{kA} + o_p(1),
\end{eqnarray}
where the last step  comes from the facts that $\cO_{N,k} \fLa_A \wt{\fe}_{kA}=0$ and $\cO_{N,k} \wt{\fe}_{kA} = 0$.
Write
\[
\fW_k^{\perp}=\sum_{i\neq k,i=1}^r \wt{\fe}_{iA} \wt{\fe}_{iA}^\mT.
\]
Then
\[
\wt{\fu}_{kA} =\langle \wt{\fu}_{kA}, \wt{\fe}_{kA} \rangle \wt{\fe}_{kA} + \fW_k^{\perp} \wt{\fu}_{kA}.
\]
Notice that  $\wt{\fu}_{kA}$ and $\wt{\fe}_{kA}$ are both unit vectors, thus
\begin{eqnarray}\label{eqn:inner_square}
1=\langle \wt{\fu}_{kA}, \wt{\fe}_{kA} \rangle^2 + \|\fW_k^{\perp} \wt{\fu}_{kA} \|^2.
\end{eqnarray}
From \eqref{eqn:u_kA-e_kA} and the fact that
$\fW_k^{\perp} \wt{\fe}_{kA} =0$, we get
\begin{eqnarray}\label{eqn:w_k_pen}
\fW_k^{\perp} \wt{\fu}_{kA}
&=& \fW_k^{\perp} \(\wt{\fu}_{kA} -\wt{\fe}_{kA}\) \nonumber\\
&=& \dfrac{1}{\lambda_k\sqrt T} \fW_k^{\perp} \cO_{N,k} \fLa_A^{1/2} \fR_N(\lambda_k) \fLa_A^{1/2} \ \wt{\fe}_{kA} + o_p(1/\sqrt{T}).
\end{eqnarray}
Combining \eqref{eqn:inner_square} and \eqref{eqn:w_k_pen} gives
\begin{eqnarray}
&& T\(1-\langle \wt{\fu}_{kA}, \wt{\fe}_{kA}\rangle^2 \)
= T \|\fW_k^{\perp} \wt{\fu}_{kA} \|^2 \nonumber\\
&=& \dfrac{1}{\lambda_k^2} \ \wt{\fe}_{kA}^\mT \fLa_A^{1/2} \fR_N(\lambda_k) \fLa_A^{1/2} \cO_{N,k} \fW_k^{\perp} \cO_{N,k} \fLa_A^{1/2} \fR_N(\lambda_k) \fLa_A^{1/2} \wt{\fe}_{kA} +o_p(1) \nonumber\\
&=& \sum_{i\neq k,i=1}^r \dfrac{\lambda_k^2}{(\lambda_k-\lambda_i)^2} \left[\dfrac{1}{\lambda_k} \fLa_A^{1/2} \fR_N(\lambda_k) \fLa_A^{1/2}\right] _{ki}^2 +o_p(1) \nonumber\\
&=&\sum_{i\neq k,i=1}^r \dfrac{\lambda_k\lambda_i}{(\lambda_k-\lambda_i)^2}  [\fR_N(\lambda_k)]_{ki}^2 +o_p(1).
\label{eqn:inner_u_kA}
\end{eqnarray}
By Lemma \ref{lem:limit_R}, the conclusion follows.
\end{proof}

%----------------------------------------------
\subsection*{B. Proof of Proposition \ref{thm:clt_spike_spike}}

\begin{proof}
By \eqref{eqn:u_kA-e_kA}, we obtain
\begin{eqnarray}
&& \sqrt T \(\wt{\fu}_{kA} -\wt{\fe}_{kA} \)  \nonumber\\
&=& \dfrac{1}{\lambda_k} \cO_{N,k} \ \fLa_A^{1/2} \fR_N(\lambda_k) \fLa_A^{1/2} \ \wt{\fe}_{kA}
+ \sqrt{T} \(\langle \wt{\fu}_{kA}, \wt{\fe}_{kA}\rangle -1 \) \wt{\fe}_{kA}
+o_p(1)  \nonumber\\
&=& \sum_{i\neq k,i=1}^r \dfrac{1}{\lambda_k-\lambda_i} \left[ \fLa_A^{1/2} \fR_N(\lambda_k) \fLa_A^{1/2} \right]_{ki} \wt{\fe}_{iA}
 - \sqrt{T}\(1-\langle \wt{\fu}_{kA}, \wt{\fe}_{kA}\rangle \) \wt{\fe}_{kA}
 +o_p(1)    \nonumber\\
&=& \sum_{i\neq k,i=1}^r \dfrac{\sqrt{\lambda_k\lambda_i}}{\lambda_k-\lambda_i} [\fR_N(\lambda_k)]_{ki} \wt{\fe}_{iA}
 - \sqrt{T}\(1-\langle \wt{\fu}_{kA}, \wt{\fe}_{kA}\rangle \) \wt{\fe}_{kA}  +o_p(1).
 \label{eqn:u_kA-e_kA_final}
\end{eqnarray}
By Lemma \ref{lem:limit_R}, the conclusion in Part (i) follows.

Next, for any fixed vectors $\fc_k, \ k=1,\ldots,r$, if there exist $i\neq j$ such that $\fc_i[j] \neq \fc_j[i]$, then by  \eqref{eqn:u_kA-e_kA_final} and \eqref{eqn:inner_u_kA} and Lemma \ref{lem:limit_R}, we have
\begin{eqnarray}\label{eqn:ck_uA}
\sqrt T \sum_{k=1}^r \fc_k^\mT \(\wt{\fu}_{kA} - \wt{\fe}_{kA} \) = I_1 -\dfrac{1}{\sqrt T} I_2 +o_p(1),
\end{eqnarray}
where
\begin{eqnarray*}
I_1 &=& \sum_{k=1}^r \sum_{i\neq k, i=1}^r \dfrac{\sqrt{\lam_k \lam_i}}{\lam_k - \lam_i} \
	[\fR_N(\lam_k)]_{ki} \ \fc_k[i]  \\
	&=& \sum_{k=1}^{r-1} \sum_{i=k+1}^r \dfrac{\sqrt{\lam_k \lam_i}}{\lam_k - \lam_i}
	\(\fc_k[i] - \fc_i[k] \) \ [\fR_N(\lam_k)]_{ki} \\
	&\wk & N\(0, \sum_{k=1}^{r-1} \sum_{i=k+1}^r \om_{ki} (\fc_k[i] - \fc_i[k] )^2 \) ,
\end{eqnarray*}
and
\begin{eqnarray*}
	I_2 &=& \sum_{k=1}^r \fc_k[k] \cdot T\(1- \langle \wt{\fu}_{kA}, \wt{\fe}_{kA}\rangle \)  \\
	&=& \dfrac 12 \sum_{k=1}^r \sum_{i\neq k,i=1}^r  \dfrac{\lam_k\lam_i}{(\lam_k-\lam_i)^2} \fc_k[k]
	[\fR_N(\lam_k)]_{ki}^2 + o_p(1) \\
	&=& \dfrac 12 \sum_{k=1}^{r-1} \sum_{i=k+1}^r  \dfrac{\lam_k\lam_i}{(\lam_k-\lam_i)^2}
	\( \fc_k[k]+\fc_i[i]\)  [\fR_N(\lam_k)]_{ki}^2 + o_p(1) \\
	&\wk& \dfrac 12 \sum_{k=1}^{r-1} \sum_{i=k+1}^r \om_{ki} \( \fc_k[k]+\fc_i[i]\)  Z_{ki}^2,
\end{eqnarray*}
where $Z_{ki}\stackrel{iid}{\sim} N(0,1)$.
The conclusion in Part (ii) follows.

Finally, by \eqref{eqn:exp_CLT_evalue} and \eqref{eqn:u_kA-e_kA_final},  Proposition \ref{thm:clt_ev_spike} and  Lemma \ref{lem:limit_R},
$\sqrt T (\wh{\lam}_\ell/\lam_\ell -1 )$ and $\sqrt T \(\wt{\fu}_{kA}-\fe_{kA} \)$ are asymptotically independent.
\end{proof}

%----------------------------------------------
\subsection*{C. Proof of Proposition \ref{thm:clt_ev_norm}}

\begin{proof}
Recall that
$ \fu_{kB}=\(\wh{\lam}_k\fI-\fS_{22} \)^{-1} \fS_{21} \fu_{kA} $. We have
\[
\|\fu_{kA}\|^2 = 1-\fu_{kB}^\mT \fu_{kB}
= 1-\fu_{kA}^\mT \fS_{12} (\wh{\lam}_k\fI-\fS_{22})^{-2} \fS_{21} \fu_{kA}.
\]
Dividing both sides by $\|\fu_{kA}\|^2$ yields
\begin{eqnarray}\label{eqn:u_kA_inv}
1=\dfrac{1}{\|\fu_{kA}\|^2} - \wt{\fu}_{kA}^\mT \fS_{12} \(\wh{\lam}_k\fI -\fS_{22} \)^{-2} \fS_{21} \wt{\fu}_{kA}.
\end{eqnarray}
Hence,
\begin{eqnarray} \label{eqn:star_2}
1-\|\fu_{kA}\|^2 &=& 1- \dfrac{1}{1+\wt{\fu}_{kA}^\mT \fS_{12} \(\wh{\lam}_k\fI-\fS_{22} \)^{-2} \fS_{21} \wt{\fu}_{kA} }  \nonumber\\
&=& \wt{\fu}_{kA}^\mT \fS_{12} \(\wh{\lam}_k\fI-\fS_{22} \)^{-2} \fS_{21} \wt{\fu}_{kA} + \wt{\vep}_{kA},
\end{eqnarray}	
where
\[
\wt{\vep}_{kA} =  \dfrac{ - \( \wt{\fu}_{kA}^\mT \fS_{12} \(\wh{\lam}_k\fI-\fS_{22} \)^{-2} \fS_{21} \wt{\fu}_{kA} \)^2 }{1+\wt{\fu}_{kA}^\mT \fS_{12} \(\wh{\lam}_k\fI-\fS_{22} \)^{-2} \fS_{21} \wt{\fu}_{kA} }.
\]
To derive the CLT of $\|\fu_{kA}\|^2$, we need to analyze the term $\fS_{12} (\wh{\lam}_k\fI-\fS_{22})^{-2} \fS_{21}$.
We first study the difference when replacing $\wh{\lam}_k$ with $\lambda_k$ in  $\fS_{12} (\wh{\lam}_k\fI-\fS_{22})^{-2} \fS_{21}$.
We have
\begin{eqnarray*}
&& \fS_{12} \lb (\wh{\lam}_k\fI-\fS_{22})^{-2} - (\lam_k\fI-\fS_{22})^{-2} \rb \fS_{21} \\
&=& \fS_{12} \lb (\wh{\lam}_k\fI-\fS_{22})^{-1} - (\lambda_k\fI-\fS_{22})^{-1} \rb
\lb (\wh{\lam}_k\fI-\fS_{22})^{-1} + (\lambda_k\fI-\fS_{22})^{-1} \rb \fS_{21}  \\	&=& L_1+L_2,
\end{eqnarray*}
where
\begin{eqnarray*}
L_1 &=&
(\lambda_k-\wh{\lam}_k)\fS_{12} (\wh{\lam}_k\fI-\fS_{22})^{-1} (\lambda_k\fI-\fS_{22})^{-2} \fS_{21} ,  \\
L_2 &=&    (\lambda_k-\wh{\lam}_k)\fS_{12} (\wh{\lam}_k\fI-\fS_{22})^{-2} (\lambda_k\fI-\fS_{22})^{-1} \fS_{21} .
\end{eqnarray*}
Define
\[
\wt{\fQ}_N(\lambda_k) = \dfrac{1}{\sqrt T}
\left\{ \dfrac 1T \fZ_A \fX_B^\mT (\fI-\lambda_k^{-1}\fS_{22})^{-3} \fX_B \fZ_A^\mT - \tr \lb (\fI-\lambda_k^{-1}\fS_{22})^{-3} \fS_{22} \rb \cdot \fI \right\} .
\]
By Theorem 7.1 of \cite{BY08}, $\wt{\fQ}_N(\lambda_k)$ converges weakly to a symmetric Gaussian random matrix $\fQ^*$ with zero mean  and  finite covariance functions.
Using the definitions of $\fS_{12}$ and $\fS_{21}$, we can rewrite $L_1$ as
\begin{eqnarray*}
L_1 &=&
(\lambda_k-\wh{\lam}_k) \ \fS_{12} (\lambda_k\fI-\fS_{22})^{-3} \fS_{21} \\
&& +
(\lambda_k-\wh{\lam}_k)^2 \  \fS_{12} (\wh{\lam}_k\fI-\fS_{22})^{-1}(\lambda_k\fI-\fS_{22})^{-3} \fS_{21} \\
&=& (\lambda_k-\wh{\lam}_k) \dfrac{1}{T^2} \fLa_A^{1/2} \fZ_A \fX_B^\mT (\lambda_k\fI-\fS_{22})^{-3} \fX_B \fZ_A^\mT \fLa_A^{1/2} \\
&& +(\lambda_k-\wh{\lam}_k)^2 \dfrac{1}{T^2} \fLa_A^{1/2} \fZ_A \fX_B^\mT (\wh{\lam}_k\fI-\fS_{22})^{-1} (\lambda_k\fI-\fS_{22})^{-3} \fX_B \fZ_A^\mT \fLa_A^{1/2}  \\
&=& \dfrac{1}{\lambda_k} \(1-\dfrac{\wh{\lam}_k}{\lambda_k} \) \dfrac{\fLa_A}{\lambda_k} \dfrac 1T \tr \lb (\fI-\lambda_k^{-1}\fS_{22})^{-3} \fS_{22}\rb +
O_p(1/N^2),
\end{eqnarray*}
where the last two steps follow from Assumption A and the facts that $1-\wh{\lam}_k/\lam_k=O_p(1/\sqrt T)$, $\wt{\fQ}_N(\lam_k)=O_p(1)$ and  $\|\fS_{22}\|=O_p(1)$.
Similarly, we obtain
\[
L_2 = \dfrac{1}{\lambda_k} \(1-\dfrac{\wh{\lam}_k}{\lambda_k} \) \dfrac{\fLa_A}{\lambda_k} \dfrac 1T \tr \lb (\fI-\lambda_k^{-1}\fS_{22})^{-3} \fS_{22}\rb +
O_p(1/N^2).
\]
In addition, we have
\begin{eqnarray*}
 \fS_{12} \(\lambda_k\fI-\fS_{22} \)^{-2} \fS_{21}
 &=&  \dfrac{1}{T^2} \fLa_A^{1/2} \fZ_A \fX_B^\mT \(\lambda_k\fI-\fS_{22} \)^{-2} \fX_B \fZ_A^\mT \fLa_A^{1/2} \\
 &=&  \dfrac{1}{\lambda_k^2\sqrt T} \fLa_A^{1/2} \wt{\fR}_N(\lambda_k) \fLa_A^{1/2} + \dfrac{\fLa_A}{\lambda_k^2} \dfrac 1T \tr\lb \(\fI-\lambda_k^{-1} \fS_{22} \)^{-2} \fS_{22} \rb ,
\end{eqnarray*}
where
\[
\wt{\fR}_N(\lambda_k) = \dfrac{1}{\sqrt T} \left\{
\dfrac 1T \fZ_A \fX_B^\mT \(\fI-\lambda_k^{-1}\fS_{22} \)^{-2} \fX_B \fZ_A^\mT -\tr \lb\(\fI-\lambda_k^{-1}\fS_{22} \)^{-2} \fS_{22} \rb\cdot \fI
\right\} .
\]
By Theorem 7.1 of \cite{BY08}, $\wt{\fR}_N(\lam_k)$ converges weakly to a symmetric Gaussian  random matrix $\wt{\fR}$ with zero mean  and finite covariance functions.  Combining the results above,
we obtain
\begin{equation} \label{temp:result_1}
\aligned
&\fS_{12} \(\wh{\lam}_k\fI-\fS_{22} \)^{-2} \fS_{21} \\
=& \fS_{12} \(\lambda_k\fI-\fS_{22} \)^{_2} \fS_{21}
+ \fS_{12} \lb \(\wh{\lam}_k\fI-\fS_{22} \)^{-2} -\(\lambda_k\fI-\fS_{22} \)^{-2} \rb \fS_{21} \\
=& \dfrac{1}{\lambda_k^2\sqrt T} \fLa_A^{1/2} \wt{\fR}_N(\lambda_k) \fLa_A^{1/2} + \dfrac{\fLa_A}{\lambda_k^2} \dfrac 1T \tr\lb \(\fI-\lambda_k^{-1} \fS_{22} \)^{-2} \fS_{22} \rb \\
& + \dfrac{2}{\lambda_k} \(1-\dfrac{\wh{\lam}_k}{\lambda_k} \) \dfrac{\fLa_A}{\lambda_k} \dfrac 1T \tr\lb \(\fI-\lambda_k^{-1} \fS_{22} \)^{-3} \fS_{22} \rb
+O_p(1/N^2)\\
=& O_p\(\dfrac{1}{\lam_k} \) .
\endaligned
\end{equation}
It follows that
$\wt{\vep}_{kA}=O_p(1/\lam_k^2)$, and
\begin{equation}\label{eq:uka_norm_1}
1-\|\fu_{kA}\|^2 = \wt{\fu}_{kA}^\mT \fS_{12} \(\wh{\lam}_k\fI-\fS_{22} \)^{-2} \fS_{21} \wt{\fu}_{kA} + O_p(1/\lam_k^2).
\end{equation}

Next, we derive the limit of $\lam_k\(1-\|\fu_{kA}||^2 \)$.
By Assumption A and \eqref{temp:result_1}, we have
\begin{eqnarray*}
&& \lam_k \(1-\|\fu_{kA}\|^2 \) =
\lam_k \cdot  \wt{\fu}_{kA}^\mT \fS_{12} \(\wh{\lam}_k\fI-\fS_{22} \)^{-2} \fS_{21} \wt{\fu}_{kA}  + O_p(1/\lam_k)   \\
&=& \dfrac{1}{\lambda_k\sqrt T} \wt{\fu}_{kA}^\mT \fLa_A^{1/2} \wt{\fR}_N(\lambda_k) \fLa_A^{1/2} \wt{\fu}_{kA}
 + \dfrac 1T \tr\lb \(\fI-\lambda_k^{-1}\fS_{22} \)^{-2}\fS_{22} \rb \cdot \dfrac{ \wt{\fu}_{kA}^\mT \fLa_A \wt{\fu}_{kA} }{\lambda_k} \\
&& + 2\(1-\dfrac{\wh{\lam}_k}{\lambda_k} \) \dfrac 1T \tr\lb \(\fI-\lambda_k^{-1}\fS_{22} \)^{-3}\fS_{22} \rb \cdot \dfrac{\wt{\fu}_{kA}^\mT\fLa_A \wt{\fu}_{kA} }{\lambda_k}+o_p(1/\sqrt T).
\end{eqnarray*}
Replacing  $\wt{\fu}_{kA}$ with $\wt{\fe}_{kA}$ and using Proposition \ref{thm:clt_spike_spike}, we get
\begin{equation}\label{eq:uka_norm_2}
\aligned
\lam_k \(1-\|\fu_{kA}\|^2 \)
=&
\dfrac{1}{\sqrt T} [\wt{\fR}_N(\lambda_k)]_{kk} + \dfrac 1T \tr\lb \(\fI-\lambda_k^{-1}\fS_{22} \)^{-2}\fS_{22} \rb  \\
& + 2\(1-\dfrac{\wh{\lam}_k}{\lambda_k}\) \dfrac 1T \tr \lb \(\fI-\lambda_k^{-1}\fS_{22} \)^{-3}\fS_{22} \rb +o_p(1/\sqrt T).
\endaligned
\end{equation}
Under Assumptions A--C, we have
\[
\dfrac 1T \tr\lb \(\fI-\lambda_k^{-1}\fS_{22} \)^{-2}\fS_{22} \rb
\ \pc \ \rho \int x dF(x),
\]
where $F(\cdot)$ is the LSD of $\fS_{22}$.
According to Theorem 1.1 of \cite{SB95},  the Stieltjes transform of $F$, $m_F$, is the unique solution in the set $\{m_F\in \bC^+: -(1-\rho)/z + \rho m_F \in \bC^+ \}$ to the equation
\[
m_F=\int \dfrac{dH(\tau)}{\tau(1-\rho-\rho z m_F)-z}.
\]
Therefore, $ \lam_k \(1-\|\fu_{kA}\|^2 \)  \pc  \rho \int x dF(x)$.

We now consider the limiting distribution of $\lam_k \(1-\|\fu_{kA}\|^2 \)$.
By \eqref{eq:uka_norm_2} and \eqref{eqn:exp_CLT_evalue}, we get
\begin{eqnarray*}
&& \sqrt T \left\{
\lambda_k\(1-\|\fu_{kA}\|^2 \) - \dfrac 1T \tr \lb \(\fI-\lambda_k^{-1} \fS_{22} \)^{-2} \fS_{22}\rb
\right\} \\
&=& [\wt{\fR}_N(\lambda_k)]_{kk}  - 2 [\fR_N(\lam_k) ]_{kk}  \cdot
\dfrac 1T \tr \lb \(\fI-\lambda_k^{-1}\fS_{22} \)^{-3}\fS_{22} \rb +o_p(1). 
\end{eqnarray*}
Notice that
\begin{eqnarray*}
	\begin{pmatrix}
		[\fR_N(\lambda)]_{kk} \\
		[\wt{\fR}_N(\lambda)]_{kk}
	\end{pmatrix}
	= \begin{pmatrix}
		\dfrac{1}{\sqrt T} [\fZ_A\wt{\fC}_N \fZ_A^\mT -\tr(\wt{\fC}_N) \cdot \fI  ]_{kk} \\
		\dfrac{1}{\sqrt T} [\fZ_A\wt{\fD}_N \fZ_A^\mT -\tr(\wt{\fD}_N) \cdot \fI  ]_{kk}
	\end{pmatrix}
	=
	\begin{pmatrix}
		\dfrac{1}{\sqrt T} \(\fz_{(k)}^\mT \wt{\fC}_N \fz_{(k)} -\tr(\wt{\fC}_N) \) \\
		\dfrac{1}{\sqrt T}  \(\fz_{(k)}^\mT \wt{\fD}_N \fz_{(k)} -\tr(\wt{\fD}_N) \)
	\end{pmatrix},
\end{eqnarray*}
where
\begin{eqnarray*}
\wt{\fC}_N &=& \wt{\fC}_N(\lam)
= \fI + \dfrac 1T \fX_B^\mT \(\lambda\fI -\fS_{22} \)^{-1} \fX_B ,
\\
\wt{\fD}_N &=& \wt{\fD}_N(\lam)
= \dfrac 1T \fX_B^\mT \(\fI -\lambda^{-1}\fS_{22} \)^{-2} \fX_B.
	\end{eqnarray*}
To finish the proof, we need the following lemma, which will be proved at the end of this subsection.

\begin{lem} \label{lem:limit_double_R}
Under Assumptions A--C,
if $\lam \asymp T$, then
$\begin{pmatrix}
[\fR_N(\lambda)]_{kk} \\
[\wt{\fR}_N(\lambda)]_{kk}
\end{pmatrix}$
converges weakly to a zero-mean Gaussian vector with covariance matrix
$ \fOm_k =
\begin{pmatrix}
\fOm_{k,11}  & \fOm_{k,12}  \\
\fOm_{k,21}  & \fOm_{k,22}
\end{pmatrix},
$
where
\begin{eqnarray*}
\fOm_{k,11} &=& \E(\fz_1[k])^4 -1 ,   \\
\fOm_{k,22} &=& \(\E(\fz_1[k])^4 -3 \) \cdot
\rho^2 \(\int x dF(x) \)^2
+ 2\rho \int x^2 dF(x),  \\
\fOm_{k,12} &=& \(\E(\fz_1[k])^4 -1 \) \cdot \rho \int x dF(x),
\end{eqnarray*}
where $F(\cdot)$ is the LSD of $\fS_{22}$.
\end{lem}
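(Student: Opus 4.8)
The plan is to mirror the proof of Lemma~\ref{lem:limit_R}: recognize $[\fR_N(\lam)]_{kk}$ and $[\wt{\fR}_N(\lam)]_{kk}$ as centered quadratic forms in the single $T$-vector $\fz_{(k)}=(\fz_1[k],\dots,\fz_T[k])^\mT$, namely $[\fR_N(\lam)]_{kk}=\tfrac1{\sqrt T}\bigl(\fz_{(k)}^\mT\wt{\fC}_N\fz_{(k)}-\tr\wt{\fC}_N\bigr)$ and $[\wt{\fR}_N(\lam)]_{kk}=\tfrac1{\sqrt T}\bigl(\fz_{(k)}^\mT\wt{\fD}_N\fz_{(k)}-\tr\wt{\fD}_N\bigr)$, where the symmetric $T\times T$ matrices $\wt{\fC}_N=\fI+\fA_N(\lam)$ and $\wt{\fD}_N=\tfrac1T\fX_B^\mT(\fI-\lam^{-1}\fS_{22})^{-2}\fX_B$ are measurable functions of $\fX_B$ alone and hence independent of $\fz_{(k)}$. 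Working throughout on the event $\cF_s=\{\|\fS_{22}\|\le C_s\}$ (which by \eqref{eq:S_22_bd} carries all but an $o(T^{-\ell})$ fraction of the mass, and on which, since $\lam\asymp T\asymp N$, $\|(\lam\fI-\fS_{22})^{-1}\|=O(1/\lam)$ and $\|(\fI-\lam^{-1}\fS_{22})^{\pm1}\|=1+O(1/\lam)$), I would invoke the joint CLT for sesquilinear forms of i.i.d.\ vectors — Corollary~7.1 of \cite{BY08}, applied exactly as in Lemma~\ref{lem:limit_R} — to conclude that $\bigl([\fR_N(\lam)]_{kk},[\wt{\fR}_N(\lam)]_{kk}\bigr)^\mT$ converges weakly to a centered Gaussian vector whose covariance entries are
\[
\bigl(\E(\fz_1[k])^4-3\bigr)\,\lim\tfrac1T\sum_{t=1}^T[\fM]_{tt}[\fM']_{tt}\;+\;2\,\lim\tfrac1T\tr(\fM\fM'),\qquad \fM,\fM'\in\{\wt{\fC}_N,\wt{\fD}_N\},
\]
the limits being in probability. (Equivalently one reduces to a single quadratic form by Cram\'er--Wold applied to $a\wt{\fC}_N+b\wt{\fD}_N$.)

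It then remains to evaluate six scalar limits. The three involving only $\wt{\fC}_N$ are immediate from Lemma~\ref{lem:limit_A}: since $\tfrac1T\tr\fA_N=O_p(1/N)$, $\tfrac1T\tr\fA_N^2=O_p(1/N^2)$ and $\tfrac1T\sum_t[\fA_N]_{tt}^2=O_p(1/N^2)$, we get $\tfrac1T\tr\wt{\fC}_N^2\to1$ and $\tfrac1T\sum_t(\wt{\fC}_N)_{tt}^2\to1$. For $\wt{\fD}_N$, arguing exactly as in the proof of Proposition~\ref{thm:clt_ev_norm}, $\tfrac1T\tr\wt{\fD}_N=\tfrac1T\tr\bigl(\fS_{22}(\fI-\lam^{-1}\fS_{22})^{-2}\bigr)\to\rho\int x\,dF(x)$ and $\tfrac1T\tr\wt{\fD}_N^2=\tfrac1T\tr\bigl(\fS_{22}(\fI-\lam^{-1}\fS_{22})^{-2}\bigr)^2\to\rho\int x^2\,dF(x)$, using moment convergence of the ESD of $\fS_{22}$ to that of $F$ (finite, since $H$ and hence $F$ has bounded support) and $\int x\,dF=\int x\,dH$ by trace preservation; hence also $\tfrac1T\tr(\wt{\fC}_N\wt{\fD}_N)=\tfrac1T\tr\wt{\fD}_N+\tfrac1T\tr(\fA_N\wt{\fD}_N)\to\rho\int x\,dF(x)$, the last trace being $O(N/(T\lam))=o(1)$. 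Since moreover $\max_t|(\wt{\fC}_N)_{tt}-1|\le\|\fA_N\|=O(1/\lam)\to0$, we get $\tfrac1T\sum_t(\wt{\fC}_N)_{tt}(\wt{\fD}_N)_{tt}=\tfrac1T\tr\wt{\fD}_N+o(1)\to\rho\int x\,dF(x)$. Granting for the moment the last limit $\tfrac1T\sum_t(\wt{\fD}_N)_{tt}^2\to\bigl(\rho\int x\,dF(x)\bigr)^2$ (treated in the final paragraph), substituting into the displayed covariance formula and simplifying yields precisely $\fOm_{k,11}=\E(\fz_1[k])^4-1$, $\fOm_{k,12}=\fOm_{k,21}=\bigl(\E(\fz_1[k])^4-1\bigr)\rho\int x\,dF(x)$ and $\fOm_{k,22}=\bigl(\E(\fz_1[k])^4-3\bigr)\bigl(\rho\int x\,dF(x)\bigr)^2+2\rho\int x^2\,dF(x)$, as claimed.

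The main obstacle is precisely the remaining limit $\tfrac1T\sum_t(\wt{\fD}_N)_{tt}^2\to\bigl(\rho\int x\,dF(x)\bigr)^2$, i.e.\ that the empirical variance $\tfrac1T\sum_t\bigl((\wt{\fD}_N)_{tt}-\tfrac1T\tr\wt{\fD}_N\bigr)^2$ of the diagonal entries of $\wt{\fD}_N$ vanishes; a bare spectral-norm bound only gives the (strictly larger in general) limit $\rho\int x^2\,dF(x)$, so genuine delocalization of the diagonal entries has to be established. The route I would take: write $(\wt{\fD}_N)_{tt}=\tfrac1T(\fx_t^{(B)})^\mT(\fI-\lam^{-1}\fS_{22})^{-2}\fx_t^{(B)}$ with $\fx_t^{(B)}=\fLa_B^{1/2}\fz_t^{(B)}$; decouple the resolvent kernel from $\fx_t^{(B)}$ via the rank-one perturbation $\fS_{22}=\fS_{22}^{(t)}+\tfrac1T\fx_t^{(B)}(\fx_t^{(B)})^\mT$ (the perturbation having $\lam^{-1}$-weighted norm $O(1/\lam)$), which replaces the kernel by the $\fx_t^{(B)}$-independent $(\fI-\lam^{-1}\fS_{22}^{(t)})^{-2}$ up to an $O(N/(T\lam))=o(1)$ error; then, conditionally on $\fS_{22}^{(t)}$, the decoupled form $\tfrac1T(\fz_t^{(B)})^\mT\fLa_B^{1/2}(\fI-\lam^{-1}\fS_{22}^{(t)})^{-2}\fLa_B^{1/2}\fz_t^{(B)}$ has mean $\tfrac1T\tr\fLa_B+o(1)$ and, by the standard second-moment bound for quadratic forms under a bounded fourth moment, conditional variance $O(N/T^2)=O(1/T)$. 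Averaging over $t$ (the diagonal entries being exchangeable) gives $\E\bigl[\tfrac1T\sum_t\bigl((\wt{\fD}_N)_{tt}-\tfrac1T\tr\fLa_B\bigr)^2\bigr]\to0$, and since $\tfrac1T\tr\fLa_B\to\rho\int x\,dH=\rho\int x\,dF$ this yields $\tfrac1T\sum_t(\wt{\fD}_N)_{tt}^2\pc\bigl(\rho\int x\,dF\bigr)^2$. A secondary technical point is to confirm that Corollary~7.1 of \cite{BY08} applies conditionally on $\fX_B$ for almost every realization on $\cF_s$, and that the (random) conditional covariance, which converges in probability to the deterministic $\fOm_k$, may be passed to the unconditional limit.
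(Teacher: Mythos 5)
Your proposal is correct and follows essentially the same route as the paper: both reduce the statement to a joint CLT for the two quadratic forms $\fz_{(k)}^\mT\wt{\fC}_N\fz_{(k)}$ and $\fz_{(k)}^\mT\wt{\fD}_N\fz_{(k)}$ in the vector $\fz_{(k)}$, independent of $(\wt{\fC}_N,\wt{\fD}_N)$ (the paper cites Theorem 2.1 of \cite{WSY2014} directly rather than Cram\'er--Wold combined with Corollary 7.1 of \cite{BY08}, but the resulting covariance formula $\om\,(\E(\fz_1[k])^4-3)+2\tau$ is identical to yours), and then evaluate the same six trace and Hadamard-trace limits with the same answers. The one step you rightly identify as the crux, $\tfrac1T\sum_t[\wt{\fD}_N]_{tt}^2\pc\bigl(\rho\int x\,dF\bigr)^2$, is handled in the paper by the same leave-one-out/rank-one resolvent identity, first showing each $[\wt{\fD}_N]_{tt}\pc\rho\int x\,dF$ and then passing to the averaged squares via a fourth-moment uniform-integrability argument in place of your conditional-variance bound; both arguments are valid.
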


Based on the lemma above, we conclude that
\[
\sqrt T \left\{
\lambda_k\(1-\|\fu_{kA}\|^2 \) - \dfrac 1T \tr \lb \(\fI-\lambda_k^{-1} \fS_{22} \)^{-2} \fS_{22}\rb
\right\}
\ \wk \ N(0, \si_{kA}^2),
\]
where
\begin{eqnarray*}
\si_{kA}^2 &=& \(-2\rho\int x dF(x), ~~ 1 \)
\begin{pmatrix}
	\fOm_{k,11}  & \fOm_{k,12} \\
	\fOm_{k,12}  & \fOm_{k,22}
\end{pmatrix}
\begin{pmatrix}
	-2\rho\int x dF(x) \\
	1
\end{pmatrix} \\
&=& \(\E(\fz_1[k])^4 -3 \) \cdot \rho^2 \(\int xdF(x)\)^2 + 2\rho\int x^2 dF(x).
\end{eqnarray*}

Further, from Assumption A that $\lam_k=O_p(N)$ for $k\le r$, and the boundedness of the eigenvalues of $\fS_{22}$ and $\max_{j>r} \wh{\lam}_j$ with high probability, it follows that
\[
\sqrt T \left[ \dfrac 1T \tr\(\(\fI-\lam_k^{-1}\fS_{22}\)^{-2}\fS_{22} \) - \dfrac 1T \tr(\fS_{22})\right] \pc 0,
\]
and
\[
\sqrt T \left[ \dfrac 1T \sum_{j=r+1}^N
\dfrac{\wh{\lam}_j}{(1-\wh{\lam}_j/\lam_k)^2}
- \dfrac 1T \sum_{j=r+1}^N \wh{\lam}_j
\right] \pc 0.
\]
Recall that
\[
\sqrt T \left[ \dfrac 1T \tr(\fS_{22}) - \dfrac 1T \sum_{j=r+1}^N \wh{\lam}_j \right] \pc 0,
\]
which has been shown in the proof of \eqref{eqn:delta}.
Therefore, Proposition \ref{thm:clt_ev_norm} follows.

\end{proof}

At last, we prove Lemma \ref{lem:limit_double_R}.
\begin{proof}[Proof of Lemma \ref{lem:limit_double_R}]

By Theorem 2.1 of \cite{WSY2014},
$\begin{pmatrix}
[\fR_N(\lambda)]_{kk} \\
[\wt{\fR}_N(\lambda)]_{kk}
\end{pmatrix}$
converges weakly to a zero-mean Gaussian vector with covariance matrix
$ \fOm_k =
\begin{pmatrix}
\fOm_{k,11}  & \fOm_{k,12}  \\
\fOm_{k,21}  & \fOm_{k,22}
\end{pmatrix},
$
where
\begin{eqnarray*}
	\fOm_{k,11} &=& \om_1 A_1 + (\tau_1-\om_1) (A_2+A_3),  \\
	\fOm_{k,22} &=& \om_2 A_1 + (\tau_2-\om_2) (A_2+A_3),  \\
	\fOm_{k,12} &=& \om_3 A_1 + (\tau_3-\om_3) (A_2+A_3),
\end{eqnarray*}
with
\[
	A_1=\E(\fz_{(k)}[1])^4 -1 = \E(\fz_1[k])^4 -1,
	~~~~~~~ A_2=1, ~~~~~~~~ A_3=1,
\]
\[
	\tau_1 = \lim_N \dfrac 1T \tr\(\wt{\fC}_N^2\),
	~~~~
	\tau_2 = \lim_N \dfrac 1T \tr\(\wt{\fD}_N^2\),
	~~~~
	\tau_3 = \lim_N \dfrac 1T \tr\(\wt{\fC}_N\wt{\fD}_N\),
\]
\[
	\om_1 = \lim_N \dfrac 1T \tr\(\wt{\fC}_N \circ \wt{\fC}_N \),
	~~~~
	\om_2 = \lim_N \dfrac 1T \tr\(\wt{\fD}_N \circ \wt{\fD}_N \),
	~~~~
	\om_3 = \lim_N \dfrac 1T \tr\(\wt{\fC}_N \circ \wt{\fD}_N \),
\]
where $\fA \circ \fB$ denotes the Hadamard product of two symmetric matrices $\fA$ and $\fB$, i.e.
$[\fA \circ \fB]_{ij}=[\fA]_{ij}\cdot[\fB]_{ij}$.

To prove Lemma \ref{lem:limit_double_R}, we need to compute the values of $\tau_i$ and $\om_i$, $i=1,2,3$.
We start with $\tau_i$'s. From the definitions of $\wt{\fC}_N$ and $\wt{\fD}_N$, it is easy to check that
\begin{eqnarray*}
\tau_1&=&\lim_N \dfrac 1T \tr(\wt{\fC}_N^2) \\
&=& \lim_N \dfrac 1T \tr\lb
\fI +\dfrac 2T \fX_B^\mT \(\lambda\fI-\fS_{22} \)^{-1} \fX_B + \dfrac 1T \fX_B^\mT (\lambda\fI-\fS_{22})^{-1} \fX_B \cdot \dfrac 1T \fX_B^\mT \(\lambda\fI-\fS_{22}\)^{-1} \fX_B
\rb \\
&=& 1+ 2 \lim_N \dfrac 1T \tr \lb
\(\lambda\fI-\fS_{22} \)^{-1} \fS_{22} \rb
+ \lim_N \dfrac 1T \tr \lb
\(\lambda\fI-\fS_{22} \)^{-1} \fS_{22}
\(\lambda\fI-\fS_{22} \)^{-1} \fS_{22}  \rb \\
&=& 1;
\end{eqnarray*}
	
\begin{eqnarray*}
	\tau_2&=&\lim_N \dfrac 1T \tr(\wt{\fD}_N^2)
	= \lim_N \dfrac 1T \tr\lb
	\dfrac 1T \fX_B^\mT (\fI-\lambda^{-1}\fS_{22})^{-2} \fX_B \cdot \dfrac 1T \fX_B^\mT (\fI-\lambda^{-1}\fS_{22})^{-2} \fX_B
	\rb \\
	&=& \lim_N \dfrac 1T \tr\lb
	\dfrac 1T \fX_B^\mT (\fI-\lambda^{-1}\fS_{22})^{-2} \fS_{22} (\fI-\lambda^{-1}\fS_{22})^{-2} \fS_{22}
	\rb \\
	&=& \rho \int x^2 dF(x);
\end{eqnarray*}
and
\begin{eqnarray*}
	\tau_3&=&\lim_N \dfrac 1T \tr(\wt{\fC}_N\wt{\fD}_N) \\
	&=& \lim_N \dfrac 1T \tr\lb
	\dfrac 1T \fX_B^\mT (\fI-\lambda^{-1}\fS_{22})^{-2} \fX_B
	+\dfrac 1T \fX_B^\mT (\lambda\fI-\fS_{22})^{-1} \fX_B
	\cdot \dfrac 1T \fX_B^\mT (\fI-\lambda^{-1}\fS_{22})^{-2} \fX_B \rb \\
	&=& \lim_N \dfrac 1T \tr \lb
	\(\fI-\lambda^{-1}\fS_{22}\)^{-2}\fS_{22}
	\rb
	+\lim_N \dfrac 1T \tr \lb
	\(\lambda\fI-\fS_{22}\)^{-1} \fS_{22}
	\(\fI-\lambda^{-1}\fS_{22}\)^{-2} \fS_{22}
	\rb  \\
	&=& \rho \int x\, dF(x).
\end{eqnarray*}

Next, we calculate the values of $\om_i$, $i=1,2,3$.
%%%%Write $\dfrac 1T \fX_B\fX_B^\mT = \dfrac 1T \sum_{j=1}^\mT \fx_j^{(B)} {\fx_j^{(B)}}^\mT$.
Denote by $\fX_{Bi}$ the matrix obtained from $\fX_B$ by deleting its $i$th column.    Then
\[
\fS_{22} =\dfrac 1T \fX_{Bi} \fX_{Bi}^\mT
+ \dfrac 1T \fx_i^{(B)} {\fx_i^{(B)}}^\mT .
\]
Recall that for any invertible matrix $\fA$ and vector $\fr$,  one has
\[
(\fA+\fr\fr^\mT)^{-1} = \fA^{-1} -
\dfrac{\fA^{-1}\fr\fr^\mT\fA^{-1}}{1+\fr^\mT\fA^{-1}\fr},
\]
and
\[
\fr^\mT(\fA+\fr\fr^\mT)^{-2}\fr =
\dfrac{\fr^\mT\fA^{-2}\fr}{\(1+\fr^\mT\fA^{-1}\fr \)^2}.
\]

By Assumption A, we have
\begin{eqnarray}\label{eqn:limit_Dii}
[\wt{\fD}_N]_{ii}
&=&  \dfrac 1T {\fx_i^{(B)}}^\mT (\lam^{-1} \fS_{22}-\fI )^{-2} {\fx_i^{(B)}} \nonumber \\
&=& \dfrac{\dfrac 1T
{\fx_i^{(B)}}^\mT \(1/(T\lam)\fX_{Bi}\fX_{Bi}^\mT -\fI \)^{-2} \fx_i^{(B)}
}
{\(
1+\dfrac 1T {\fx_i^{(B)}}^\mT \(T^{-1}\fX_{Bi}\fX_{Bi}^\mT -\lam\fI \)^{-1} \fx_i^{(B)}
\)^2	
} \nonumber \\
&\pc& \rho \int x dF(x),
\end{eqnarray}
and
\begin{eqnarray}\label{eqn:limit_Cii}
[\wt{\fC}_N]_{ii}
&=& 1-\dfrac 1T  {\fx_i^{(B)}}^\mT (\fS_{22}-\lam\fI)^{-1}  {\fx_i^{(B)}}
\nonumber \\
&=& 1-  \dfrac{ \dfrac 1T  {\fx_i^{(B)}}^\mT \(T^{-1} \fX_{Bi}\fX_{Bi}^\mT -\lam\fI \)^{-1} \fx_i^{(B)} }{1+\dfrac 1T  {\fx_i^{(B)}}^\mT \(T^{-1} \fX_{Bi}\fX_{Bi}^\mT -\lam\fI \)^{-1} \fx_i^{(B)} }  \nonumber \\
&\pc& 1.
\end{eqnarray}
It is easy to see that
\[
\lim_N \dfrac 1T \E \tr\(T^{-1}\fX_B^T (\fI-\lam^{-1}\fS_{22} )^{-2} \fX_B \)^4 \indic(\cF_s)
< \infty,
\]
and
\[
\lim_N \dfrac 1T \E\tr\(T^{-1} \fX_B^T (\lam\fI-\fS_{22})^{-1} \fX_B \)^4 \indic(\cF_s) =0.
\]
Hence
\[
\sup_N \E([\wt{\fD}_N]_{ii}  \indic(\cF_s) )^4
\le \sup_N \dfrac 1T \E\tr\(T^{-1}\fX_B^T (\fI-\lam^{-1}\fS_{22})^{-2}\fX_B \)^4 \indic(\cF_s)
< \infty
\]
and $\sup_N \E([\wt{\fC}_N]_{ii}\indic(\cF_s) )^4  <\infty$,
which implies that the family of random variables $\{[\wt{\fD}_N]_{ii}^2 \indic(\cF_s) \}$ and $\{[\wt{\fC}_N]_{ii}^2\indic(\cF_s) \}$ are uniformly integrable.
Together with \eqref{eqn:limit_Dii} and \eqref{eqn:limit_Cii} and the fact that $\indic(\cF_s)=1$ with high probability, we get
\[
\E\left|
\dfrac 1T \sum_{i=1}^T [\wt{\fD}_N]_{ii}^2 \indic(\cF_s) - \(\rho\int xdF(x) \)^2
\right|
\le \E \left|
[\wt{\fD}_N]_{11}^2\indic(\cF_s) - \(\rho\int xdF(x) \)^2
\right|
\to 0,
\]
and
\[
\E\left|\dfrac 1T \sum_{t=1}^T [\wt{\fC}_N]_{tt}^2\indic(\cF_s) -1 \right| \to 0.
\]
Thus, $\dfrac 1T \sum_{t=1}^T [\wt{\fD}_N]_{tt}^2\indic(\cF_s) \pc \(\rho\int xdF(x) \)^2$ and $\dfrac 1T \sum_{t=1}^T [\wt{\fC}_N]_{tt}^2\indic(\cF_s) \pc 1$.
Moreover, noting that in the event $\cF_s$, $[\wt{\fC}_N]_{ii}$ and $[\wt{\fD}_N]_{ii}$, $i=1,\ldots, T$, are uniformly bounded and that $P(\cF_s)\to 1$, we obtain
\[
\dfrac 1T \sum_{i=1}^T [\wt{\fD}_N]_{ii}^2 \pc \(\rho\int xdF(x) \)^2,
~~ \textrm{and} ~
\dfrac 1T \sum_{i=1}^T [\wt{\fC}_N]_{ii}^2 \pc 1
\]
Therefore,
\begin{eqnarray*}
\om_1 = \lim_N \dfrac 1T \tr\(\wt{\fC}_N \circ \wt{\fC}_N \)
= 1 ,
\end{eqnarray*}	
\begin{eqnarray*}
\om_2 = \lim_N \dfrac 1T \tr\(\wt{\fD}_N \circ \wt{\fD}_N \)
= \rho^2 \ \(\int x dF(x) \)^2 ,
\end{eqnarray*}
and
\begin{eqnarray*}
\om_3 = \lim_N \dfrac 1T \tr\(\wt{\fC}_N \circ \wt{\fD}_N \)
= \rho \  \int x dF(x) .
\end{eqnarray*}
In summary,
\[
\begin{pmatrix}
	[\fR_N(\lambda)]_{kk} \\
	[\wt{\fR}_N(\lambda)]_{kk}
\end{pmatrix}
	~~ \wk ~~ N\(0, \fOm_k=
\begin{pmatrix}
	\fOm_{k,11}  & \fOm_{k,12} \\
	\fOm_{k,12}  & \fOm_{k,22}
\end{pmatrix}
	\) ,
\]
where
\begin{eqnarray*}
\fOm_{k,11} &=& \E(\fz_1[k])^4 -1   \\
\fOm_{k,22} &=& \(\E(\fz_1[k])^4 -3 \) \cdot \rho^2 \ \(\int x dF(x) \)^2 + 2\rho \int x^2 dF(x), \mbox{and }  \\
\fOm_{k,12} &=& \(\E(\fz_1[k])^4 -1 \) \cdot \rho \int x dF(x) .
\end{eqnarray*}
\end{proof}

%%%%%%%%%-----------------------%%%%%%%%%%%
\section*{S5. Proof of Theorem \ref{thm:clt_ev} }
\label{subsec:proof_thm:clt_ev}
\begin{proof}
Recall that $\langle \fv_k, \wh{\fv}_k \rangle=\langle \fu_k, \fe_k \rangle$, where $\fv_k, \wh{\fv}_k$ are the $k$th principal eigenvector of $\fSi$ and $k$th principal sample eigenvector of $\wh{\fSi}_N$, respectively, and $\fu_k$ is the $k$th eigenvector of sample covariance matrix $\fS_N$.
Under  Assumptions A--C, by Propositions~\ref{thm:clt_ev_spike} and \ref{thm:clt_ev_norm}, we get
\begin{eqnarray*}
&& T \(
1-\langle \fu_k, \fe_k \rangle^2 -\dfrac{1}{T\lambda_k}
\sum_{j=r+1}^N \dfrac{\wh{\lam}_j}{(1-\wh{\lam}_j/\lambda_k)^2}
\) \\
&=& T \(
1-\|\fu_{kA}\|^2 + \|\fu_{kA}\|^2 \(1-\langle \wt{\fu}_{kA}, \wt{\fe}_{kA} \rangle^2
\) - \dfrac{1}{T\lambda_k} \sum_{j=r+1}^N \dfrac{\wh{\lam}_j}{(1-\wh{\lam}_j/\lambda_k)^2}
\) \\
&=& \dfrac{T}{\lambda_k}
\(
\lambda_k\(1-\|\fu_{kA}\|^2 \) -\dfrac 1T
\sum_{j=r+1}^N \dfrac{\wh{\lam}_j}{(1-\wh{\lam}_j/\lambda_k)^2}
\)
+T \|\fu_{kA}\|^2 \(1-\langle \wt{\fu}_{kA}, \wt{\fe}_{kA} \rangle^2 \) \\
&\wk& \sum_{i\neq k,i=1}^r \om_{ki}  \ Z_i^2,
\end{eqnarray*}
where $Z_i$ are \hbox{i.i.d.} standard normal random variables.

It remains to prove that
\[
\dfrac{1}{\lambda_k} \sum_{j=r+1}^N
\dfrac{\wh{\lambda}_j}{\(1-\wh{\lambda}_j/\lambda_k \)^2}
- \dfrac{1}{\wh{\lambda}_k} \sum_{j=r+1}^N
\dfrac{\wh{\lambda}_j}{\(1-\wh{\lambda}_j/\wh{\lambda}_k \)^2}
\pc 0.
\]
Rewrite the term as
\begin{eqnarray*}
&&\dfrac{1}{\lambda_k} \sum_{j=r+1}^N
\dfrac{\wh{\lambda}_j}{\(1-\wh{\lambda}_j/\lambda_k \)^2}
- \dfrac{1}{\wh{\lambda}_k} \sum_{j=r+1}^N
\dfrac{\wh{\lambda}_j}{\(1-\wh{\lambda}_j/\lambda_k \)^2}
\\
&& +\dfrac{1}{\wh{\lambda}_k} \sum_{j=r+1}^N
\dfrac{\wh{\lambda}_j}{\(1-\wh{\lambda}_j/\lambda_k \)^2}
- \dfrac{1}{\wh{\lambda}_k} \sum_{j=r+1}^N
\dfrac{\wh{\lambda}_j}{\(1-\wh{\lambda}_j/\wh{\lambda}_k \)^2}
\\
&=& \dfrac{1}{\wh{\lambda}_k}
\(\dfrac{\wh{\lambda}_k}{\lambda_k} -1 \)
\sum_{j=r+1}^N \dfrac{\wh{\lambda}_j}{\(1-\wh{\lambda}_j/\lambda_k \)^2}
\\
&&+ \dfrac{1}{\wh{\lambda}_k^2} \(1-\dfrac{\wh{\lambda}_k}{\lambda_k} \)
\sum_{j=r+1}^N \dfrac{\wh{\lambda}_j^2\(2-\wh{\lambda}_j/\lambda_k -\wh{\lambda}_j/\wh{\lambda}_k \) }{\(1-\wh{\lambda}_j/\wh{\lambda}_k \)^2 \(1-\wh{\lambda}_j/\lambda_k \)^2}.
\end{eqnarray*}
By Assumption A--C and Theorem 1, the term converge to zero in probability.
\end{proof}

%%%%%%%%%-----------------------%%%%%%%%%%%
\section*{S6. Proofs of Theorem \ref{thm:test_tv} and Corollary \ref{cor:test_tvk} }
\label{ssec:pf_thm:test_tv}

\begin{proof}[Proof of Theorem 6]
By \eqref{eqn:u_kA-e_kA_final}, we have
\begin{equation}\label{star1}
\sqrt{N} \(\wt{\fu}_{kA}^{(i)} -\wt{\fe}_{kA} \)
=\sqrt{\frac{N}{T_i}} \sum_{\ell\neq k, \ell=1}^r
\frac{\sqrt{\lambda_k^{{(i)}}\lambda_\ell^{{(i)}}}}{\lambda_k^{{(i)}}-\lambda_\ell^{{(i)}}} [\fR_N^{{(i)}}(\lambda_k^{{(i)}})]_{k \ell} \ \wt{\fe}_{\ell A} +o_p(1), \q i=1,2.
\end{equation}
Hence, when $\ell\neq k$,
\begin{equation}\label{star2}
\sqrt{N}  \ \dfrac{u_k^{{(i)}}[\ell]}{\|\fu_{kA}^{{(i)}} \|}
= \sqrt{\frac N T_i} \cdot \dfrac{\sqrt{\lambda_k^{{(i)}}\lambda_\ell^{{(i)}}}}{\lambda_k^{{(i)}}-\lambda_\ell^{{(i)}}} \  [\fR_N^{{(i)}}(\lambda_k^{{(i)}})]_{k \ell} \  +o_p(1), \q i=1,2.
\end{equation}
Similarly, by \eqref{eqn:inner_u_kA},
\begin{equation}\label{star3}
	N \(1-\dfrac{|u_k^{{(i)}}[k]|}{\|\fu_{kA}^{{(i)}}\|} \)
	= \dfrac{N}{2T_i} \cdot \sum_{\ell\neq k,\ell=1}^r \dfrac{\lambda_k^{{(i)}}\lambda_\ell^{{(i)}}}{(\lambda_k^{{(i)}}-\lambda_\ell^{{(i)}})^2} [\fR_N^{{(i)}}(\lambda_k^{{(i)}})]_{k\ell}^2 +o_p(1), \q i=1,2.
\end{equation}	
Proposition \ref{thm:clt_ev_norm} implies that
\begin{equation}\label{star4}
N(1-\|\fu_{kA}^{{(i)}}\|) \ \pc  \ \dfrac{\rho_i}{2\theta_k^{{(i)}}} \int x dF^{{(i)}}(x), \q i=1,2.
\end{equation} 	
Write the two population eigen-matrices ${\fV}^{(1)}, {\fV}^{(2)}$ as
\[
{\fV}^{(i)} = ( {\fv}_1^{(i)}, \ldots, {\fv}_N^{(i)} ), ~~~~~~ i=1,2,
\]
and define
\[
\fXi=([\fXi]_{ij})={\fV}^{(1)}{}^\mT {\fV}^{(2)}
=\begin{pmatrix}
\fXi_{11} & \fXi_{12} \\
\fXi_{21} & \fXi_{22},
\end{pmatrix}	
\]
where $[\fXi]_{ij}={\fv}_i^{(1)}{}^\mT {\fv}_j^{(2)}$, and
$\fXi_{11}, \fXi_{12}, \fXi_{21}, \fXi_{22}$ are of sizes $r\times r, r\times (N-r), (N-r)\times r$ and $(N-r)\times (N-r)$, respectively.

Under null hypothesis $H_0^{(III,k)}: |\langle \fv_k^{(1)}, \fv_k^{(2)} \rangle|=1$, the $k$th row and $k$th column of $\fXi$ are zero except that the $k$th diagonal entry is one. To prove the theorem, note that
\begin{eqnarray*}
	\langle \wh{\fv}_k^{(1)}, \wh{\fv}_k^{(2)} \rangle
	&=& \langle {\fV}^{(1)} \fu_k^{(1)}, {\fV}^{(2)} \fu_k^{(2)} \rangle
	= \fu_k^{(1)}{}^\mT \fXi \fu_k^{(2)} \\
	&=& \fu_{kA}^{(1)}{}^\mT \fXi_{11} \fu_{kA}^{(2)} +
	\fu_{kA}^{(1)}{}^\mT \fXi_{12} \fu_{kB}^{(2)} +
	\fu_{kB}^{(1)}{}^\mT \fXi_{21} \fu_{kA}^{(2)} +
	\fu_{kB}^{(1)}{}^\mT \fXi_{22} \fu_{kB}^{(2)}.
\end{eqnarray*}
We start with the first term $\fu_{kA}^{(1)}{}^\mT \fXi_{11} \fu_{kA}^{(2)}$, and will  show later that
\begin{equation}\label{eqn:3terms}
N \fu_{kA}^{(1)}{}^\mT \fXi_{12} \fu_{kB}^{(2)} =o_p(1), ~~
N \fu_{kB}^{(1)}{}^\mT \fXi_{21} \fu_{kA}^{(2)} =o_p(1), \mbox{ and }
N \fu_{kB}^{(1)}{}^\mT \fXi_{22} \fu_{kB}^{(2)} =o_p(1).
\end{equation}

Because the entries in the $k$th row and $k$th column of $\fXi_{11}$ are zero  except that $[\fXi_{11}]_{kk}=1$, we have
\begin{eqnarray}\label{eqn:term_t1}
&& N \(1-\fu_{kA}^{(1)}{}^\mT \fXi_{11}  \fu_{kA}^{(2)}\) \nonumber\\
&=& N \(1-u_k^{(1)}[k] \cdot u_k^{(2)}[k] \)
-\sum_{i,j=1,i\neq k,j\neq k}^r N \ [\fXi_{11}]_{ij}\cdot u_k^{(1)}[i] \cdot u_k^{(2)}[j] .
\end{eqnarray}
For the first term, we have
\begin{equation}\label{eq:2sample_AA_1}
\aligned
	& N\(1-u_k^{(1)}[k] \cdot u_k^{(2)}[k] \)
	= N(1-u_k^{(1)}[k]) + u_k^{(1)}[k] \cdot N(1-u_k^{(2)}[k]) \\
	=:& N(1-u_k^{(1)}[k]) + N(1-u_k^{(2)}[k]) + \vep_1 \\
	=& N\(1-\dfrac{u_k^{(1)}[k]}{\|\fu_{kA}^{(1)}\|} \)
	+ N\dfrac{u_k^{(1)}[k]}{\|\fu_{kA}^{(1)}\|} (1-\|\fu_{kA}^{(1)}\| )
	+ N\(1-\dfrac{u_k^{(2)}[k]}{\|\fu_{kA}^{(2)}\|} \)\\
	& + N\dfrac{u_k^{(2)}[k]}{\|\fu_{kA}^{(2)}\|} (1-\|\fu_{kA}^{(2)}\| ) +\vep_1 \\
	=:& N\(1-\dfrac{u_k^{(1)}[k]}{\|\fu_{kA}^{(1)}\|} \)
	+ N(1-\|\fu_{kA}^{(1)}\| )
	+ N\(1-\dfrac{u_k^{(2)}[k]}{\|\fu_{kA}^{(2)}\|} \)\\
	&+ N(1-\|\fu_{kA}^{(2)}\| ) +\vep_2 +\vep_1,
\endaligned
\end{equation}
where
\begin{eqnarray*}
\vep_1 &=& N(1-u_k^{(2)}[k])(u_k^{(1)}[k]-1) ,  ~~~~ \textrm{and}\\
\vep_2 &=& N(1-\|\fu_{kA}^{(1)}\|) \(\dfrac{u_k^{(1)}[k]}{\|\fu_{kA}^{(1)}\|}-1 \)
	+N(1-\|\fu_{kA}^{(2)}\|) \(\dfrac{u_k^{(2)}[k]}{\|\fu_{kA}^{(2)}\|}-1 \).
\end{eqnarray*}
By Theorem 3 and Proposition \ref{thm:clt_ev_spike}, both $\vep_1$ and $\vep_2$ are $o_p(1)$.

For the second term on the right-hand side of  \eqref{eqn:term_t1}, by Proposition \ref{thm:clt_ev_norm}, we have
\begin{equation}\label{eq:2sample_AA_2}
\aligned
	& N \sum_{i,j\neq k, i,j=1}^r [\fXi_{11}]_{ij}\cdot  u_k^{(1)}[i] \cdot u_k^{(2)}[j]  \\
	=& N \|\fu_{kA}^{(1)}\| \cdot \|\fu_{kA}^{(2)}\| \cdot
	\sum_{i,j\neq k, i,j=1}^r [\fXi_{11}]_{ij}\cdot  \dfrac{u_k^{(1)}[i]}{\|\fu_{kA}^{(1)}\| } \cdot \dfrac{u_k^{(2)}[j]}{\|\fu_{kA}^{(2)}\|}  \\
	=& N  \sum_{i\neq k, i=1}^r \sum_{j\neq k, j=1}^r
	[\fXi_{11}]_{ij}\cdot  \dfrac{u_k^{(1)}[i]}{\|\fu_{kA}^{(1)}\| } \cdot \dfrac{u_k^{(2)}[j]}{\|\fu_{kA}^{(2)}\|}  +o_p(1).
\endaligned
\end{equation}

Combining \eqref{eq:2sample_AA_1} and \eqref{eq:2sample_AA_2} and using \eqref{star2} and \eqref{star3}, we obtain
\begin{eqnarray*}
	&& N\(1-\fu_{kA}^{(1)}{}^\mT \fXi_{11} \fu_{kA}^{(2)} \) \\
	&=& N\(1-\|\fu_{kA}^{(1)}\| \) + N\(1-\|\fu_{kA}^{(2)}\| \)
	+ N\(1-\dfrac{u_k^{(1)}[k]}{\|\fu_{kA}^{(1)}\|} \)
	+ N\(1-\dfrac{u_k^{(2)}[k]}{\|\fu_{kA}^{(2)}\|} \)  \\
	&& - N \sum_{i\neq k, i=1}^r \sum_{j\neq k, j=1}^r
	[\fXi_{11}]_{ij}\cdot  \dfrac{u_k^{(1)}[i]}{\|\fu_{kA}^{(1)}\| } \cdot \dfrac{u_k^{(2)}[j]}{\|\fu_{kA}^{(2)}\|}  + o_p(1)  \\
	&=& N\(1-\|\fu_{kA}^{(1)}\| \) + N\(1-\|\fu_{kA}^{(2)}\| \)  \\
	&& +\dfrac{N}{2T_1} \sum_{i\neq k,i=1}^r \dfrac{\lambda_k^{(1)} \lambda_i^{(1)} }{(\lambda_k^{(1)}-\lambda_i^{(1)})^2} [\fR_N^{(1)}(\lambda_k^{(1)})]_{ki}^2
	+\dfrac{N}{2T_2} \sum_{j\neq k,j=1}^r \dfrac{\lambda_k^{(2)}\lambda_j^{(1)}}{(\lambda_k^{(2)}-\lambda_j^{(2)})^2} [\fR_N^{(2)}(\lambda_k^{(2)})]_{kj}^2  \\
	&& - \sqrt{\dfrac{N^2}{T_1T_2}} \sum_{i\neq k, i=1}^r \sum_{j\neq k, j=1}^r
	\dfrac{\sqrt{\lambda_k^{(1)}\lambda_i^{(1)}}}{\lambda_k^{(1)}-\lambda_i^{(1)}} \dfrac{\sqrt{\lambda_k^{(2)}\lambda_j^{(2)}}}{\lambda_k^{(2)}-\lambda_j^{(2)}}
	[\fR_N^{(1)}(\lambda_k^{(1)})]_{ki} \cdot [\fR_N^{(2)}(\lambda_k^{(2)})]_{kj}
	\cdot (\fXi_{11})_{ij} \\
&& +o_p(1).
\end{eqnarray*}
For $k=1,\ldots,r$, define two $(r-1)\times 1$ vectors $\fa_k$ and $\fb_k$ as
\begin{eqnarray*}
	\fa_k = \begin{pmatrix}
		\dfrac{\sqrt{\lambda_k^{(1)}\lambda_1^{(1)}}}{\lambda_k^{(1)}-\lambda_1^{(1)}} \  [\fR_N^{(1)}(\lambda_k^{(1)})]_{k1} \\
		\vdots \\
		\dfrac{\sqrt{\lambda_k^{(1)}\lambda_{k-1}^{(1)}}}{\lambda_k^{(1)}-\lambda_{k-1}^{(1)}} \  [\fR_N^{(1)}(\lambda_k^{(1)})]_{k(k-1)} \\
		\dfrac{\sqrt{\lambda_k^{(1)}\lambda_{k+1}^{(1)}}}{\lambda_k^{(1)}-\lambda_{k+1}^{(1)}} \  [\fR_N^{(1)}(\lambda_k^{(1)})]_{k(k+1)} \\
		\vdots \\
		\dfrac{\sqrt{\lambda_k^{(1)}\lambda_{r}^{(1)}}}{\lambda_k^{(1)}-\lambda_r^{(1)}} \  [\fR_N^{(1)}(\lambda_k^{(1)})]_{kr}
	\end{pmatrix}
	, ~~~~~~~
	\fb_k = \begin{pmatrix}
		\dfrac{\sqrt{\lambda_k^{(2)}\lambda_{1}^{(2)}}}{\lambda_k^{(2)}-\lambda_1^{(2)}} \  [\fR_N^{(2)}(\lambda_k^{(2)})]_{k1} \\
		\vdots \\
		\dfrac{\sqrt{\lambda_k^{(2)}\lambda_{k-1}^{(2)}}}{\lambda_k^{(2)}-\lambda_{k-1}^{(2)}} \  [\fR_N^{(2)}(\lambda_k^{(2)})]_{k(k-1)} \\
		\dfrac{\sqrt{\lambda_k^{(2)}\lambda_{k+1}^{(2)}}}{\lambda_k^{(2)}-\lambda_{k+1}^{(2)}} \  [\fR_N^{(2)}(\lambda_k^{(2)})]_{k(k+1)} \\
		\vdots \\
		\dfrac{\sqrt{\lambda_k^{(2)}\lambda_{r}^{(2)}}}{\lambda_k^{(2)}-\lambda_r^{(2)}} \  [\fR_N^{(2)}(\lambda_k^{(2)})]_{kr}
	\end{pmatrix}.
\end{eqnarray*}
Under the assumptions of Theorem 6, by Lemma \ref{lem:limit_R}, we have
\begin{eqnarray}\label{eqn:akbk}
\fa_k \ \wk \ N(0,\fD_{a_k}), ~~~~~
\fb_k \ \wk \ N(0,\fD_{b_k}),
\end{eqnarray}
where
\begin{eqnarray*}
	\fD_{a_k} &=& \diag(\om_{k1}^{(1)},\ldots,\om_{k(k-1)}^{(1)}, \om_{k(k+1)}^{(1)}, \ldots,\om_{kr}^{(1)} ), \\
	\fD_{b_k} &=& \diag(\om_{k1}^{(2)},\ldots,\om_{k(k-1)}^{(2)}, \om_{k(k+1)}^{(2)}, \ldots,\om_{kr}^{(2)} ),
\end{eqnarray*}
and
\[
\om_{kj}^{(i)} =
\dfrac{\theta_k^{(i)}\theta_j^{(i)} }{(\theta_k^{(i)}-\theta_j^{(i)} )^2},
~~~~~~ \textrm{for}
~~~ i=1,2, ~~~ 1\le  j \neq k \le r.
\]
Let $\fXi_{11,-k}$ be the matrix obtained by removing the $k$th row and $k$th column of $\fXi_{11}$. Then by \eqref{eqn:term_t1},  \eqref{star4} and \eqref{eqn:akbk}, we get
\begin{eqnarray*}
	&& N\(1-\fu_{kA}^{(1)}{}^\mT \fXi_{11} \fu_{kA}^{(2)} \) \\
	&=& N\(1-\|\fu_{kA}^{(1)}\| \) + N\(1-\|\fu_{kA}^{(2)}\| \)\\
	&&+ \dfrac{N}{2T_1} \fa_k^\mT \fa_k + \dfrac{N}{2T_2} \fb_k^\mT \fb_k
	-\sqrt{\dfrac{N^2}{T_1T_2}} \fa_k^\mT \fXi_{11,-k} \fb_k +o_p(1) \\
	&=& N\(1-\|\fu_{kA}^{(1)}\| \) + N\(1-\|\fu_{kA}^{(2)}\| \)\\
	&&+ \dfrac 12
	\begin{pmatrix}
		\sqrt{\dfrac{N}{T_1}}\fa_k \\
		\sqrt{\dfrac{N}{T_2}}\fb_k
	\end{pmatrix}^\mT
	\begin{pmatrix}
		\fI_{r-1} & -\fXi_{11,-k} \\
		-\fXi_{11,-k}^\mT & \fI_{r-1}
	\end{pmatrix}
	\begin{pmatrix}
		\sqrt{\dfrac{N}{T_1}}\fa_k \\
		\sqrt{\dfrac{N}{T_2}}\fb_k
	\end{pmatrix} +o_p(1) \\
	&\wk& \dfrac{\rho_1}{2\theta_k^{(1)}} \int x dF^{(1)} + \dfrac{\rho_2}{2\theta_k^{(2)}} \int x dF^{(2)} + \dfrac 12 \fq_k^\mT
	\begin{pmatrix}
		\fI_{r-1} & -\fXi_{11,-k}^* \\
		-{\fXi_{11,-k}^*}^\mT & \fI_{r-1}
	\end{pmatrix}
	\fq_k,
\end{eqnarray*}
where
$\fq_k \sim N(0,\fD_k)$ with
$\fD_k=
\begin{pmatrix}
\rho_1\fD_{a_k} & 0 \\
0 & \rho_2\fD_{b_k}
\end{pmatrix}  .
$

Combining \eqref{eqn:delta}, \eqref{eqn:3terms} and the convergence
\[
\dfrac{N^2}{T_i(N-r) \wh{\lam}_k^{(i)}} \sum_{j=r+1}^N \wh{\lam}_j^{(i)} \ \pc \
\dfrac{\rho_i}{\th_k^{(i)}} \int x dF^{(i)},~~~~ i=1,2,
\]
we get that
our  test statistic
\[
T_{vk}= 2N\(1-\langle\wh{\fv}_k^{(1)},\wh{\fv}_k^{(2)}\rangle \)
-\dfrac{N^2}{T_1(N-r)\wh{\lam}_k^{(1)}} \cdot \sum_{j=r+1}^N \wh{\lam}_j^{(1)}
-\dfrac{N^2}{T_2(N-r)\wh{\lam}_k^{(2)}} \cdot \sum_{j=r+1}^N \wh{\lam}_j^{(2)}
\]
converges weakly to
\[
\fq_k^T
\begin{pmatrix}
\fI_{r-1} & -\fXi_{11,-k}^* \\
-{\fXi_{11,-k}^*}^\mT & \fI_{r-1}
\end{pmatrix}
\fq_k  .
\]

It remains to prove \eqref{eqn:3terms}.
By  \eqref{eqn:u_kB_exp}, we have
\begin{eqnarray*}
N\fu_{kA}^{(1)}{}^\mT \fXi_{12} \fu_{kB}^{(2)}
= N\fu_{kA}^{(1)}{}^\mT \fXi_{12} (\lambda_k^{(2)}\fI-\fS_{22}^{(2)})^{-1} \fS_{21}^{(2)} \fu_{kA}^{(2)} +\vep_3 ,
\end{eqnarray*}
where
\begin{equation}\label{eq:eps_3}
\vep_3=N\fu_{kA}^{(1)}{}^\mT \fXi_{12} [(\wh{\lam}_k^{(2)}\fI-\fS_{22}^{(2)})^{-1} - (\lambda_k^{(2)}\fI-\fS_{22}^{(2)})^{-1}] \fS_{21}^{(2)} \fu_{kA}^{(2)}.
\end{equation}
Write
\[
N\fu_{kA}^{(1)}{}^\mT \fXi_{12} (\lambda_k^{(2)}\fI-\fS_{22}^{(2)})^{-1} \fS_{21}^{(2)} \fu_{kA}^{(2)}
= I_1 + I_2 + \vep_4,
\]
where
\begin{eqnarray*}
I_1 &=& N \ \wt{\fe}_{kA}^\mT \fXi_{12} (\lambda_k^{(2)}\fI-\fS_{22}^{(2)})^{-1} \fS_{21}^{(2)} \wt{\fu}_{kA}^{(2)}  , \\
I_2 &=& N(\wt{\fu}_{kA}^{(1)}-\wt{\fe}_{kA} )^\mT \fXi_{12} (\lambda_k^{(2)}\fI-\fS_{22}^{(2)})^{-1} \fS_{21}^{(2)} \wt{\fu}_{kA}^{(2)} , \q \mbox{and}\\
\vep_4 &=& N(\fu_{kA}^{(1)}-\wt{\fu}_{kA}^{(1)} )^\mT \fXi_{12} (\lambda_k^{(2)}\fI - \fS_{22}^{(2)} )^{-1} \fS_{21}^{(2)} \fu_{kA}^{(2)} \\
		&& + N\wt{\fu}_{kA}^{(1)}{}^\mT \fXi_{12} (\lambda_k^{(2)}\fI - \fS_{22}^{(2)} )^{-1} \fS_{21}^{(2)} (\fu_{kA}^{(2)} - \wt{\fu}_{kA}^{(2)} )\\
&=:& \vep_{41} + \vep_{42}.
\end{eqnarray*}
For term $I_1$, note that $\wt{\fe}_{kA}{}^\mT \fXi_{12}$ is the $k$th row of $\fXi_{12}$ which is zero, hence $I_1=0$.

Next, we prove that $I_2=o_p(1)$. Write
\begin{eqnarray*}
I_2 &=& N(\wt{\fu}_{kA}^{(1)}-\wt{\fe}_{kA} )^\mT \fXi_{12} (\lambda_k^{(2)}\fI-\fS_{22}^{(2)})^{-1} \fS_{21}^{(2)}
		(\wt{\fu}_{kA}^{(2)}-\wt{\fe}_{kA})   \\
		&&+N(\wt{\fu}_{kA}^{(1)}-\wt{\fe}_{kA} )^\mT \fXi_{12} (\lambda_k^{(2)}\fI-\fS_{22}^{(2)})^{-1} \fS_{21}^{(2)}
		\wt{\fe}_{kA} \\
		&=:& I_{21} + I_{22}.
\end{eqnarray*}
By Proposition \ref{thm:clt_spike_spike} and that $\lam_k^{(2)}=O(N)$, we get $I_{21}=o_p(1)$.
As to $I_{22}$, by \eqref{star1}, Assumption A and the facts that $\|\fS_{22}^{(2)}\|=O_p(1)$ and  $\|\fS_{21}^{(2)}\|=O_p(\sqrt{N})$, we obtain
\begin{eqnarray*}
I_{22}&=& \dfrac{N}{\sqrt{T_1}} \sum_{\ell\neq k,\ell=1}^r \dfrac{\sqrt{\lambda_k^{(1)}\lambda_\ell^{(1)}}}{\lambda_k^{(1)}-\lambda_\ell^{(1)}} [\fR_N^{(1)}(\lambda_k^{(1)})]_{k\ell}  \\
&& ~~~~~~~~~~~~~~~~~~~~~~~~
		\times
\wt{\fe}_{\ell A}^\mT \fXi_{12} (\lambda_k^{(2)}\fI-\fS_{22}^{(2)})^{-1}
 \dfrac{1}{T_2} \fX_B^{(2)} {\fZ_A^{(2)}}^\mT {\fLa_A^{(2)}}^{1/2} \wt{\fe}_{kA} +o_p(1) \\
		&=&\dfrac{N}{\sqrt{T_1 \lambda_k^{(2)}}}  \sum_{\ell\neq k,\ell=1}^r
		\dfrac{\sqrt{\lambda_k^{(1)}\lambda_\ell^{(1)}}}{\lambda_k^{(1)}-\lambda_\ell^{(1)}}
		[\fR_N^{(1)}(\lambda_k^{(1)})]_{k\ell} \\
&& ~~~~~~~~~~~~~~~~~~~~~~~~
\times \dfrac{1}{T_2}
		\wt{\fe}_{\ell A}^\mT \fXi_{12} (\fI-1/\lambda_k^{(2)} \cdot \fS_{22}^{(2)})^{-1}  \fX_B^{(2)} \fz_{(k)}^{(2)} +o_p(1).
\end{eqnarray*}
Using the independence between $\fz_{(k)}^{(2)}$ and $\fX_B^{(2)}$, $\fS_{22}^{(2)}$,
Assumption A  and that $\|\fS_{22}\|=O_p(1)$, we have
\[
\dfrac{1}{T_2}\wt{\fe}_{\ell A}^\mT \fXi_{12} (\fI-1/\lambda_k^{(2)} \cdot \fS_{22}^{(2)})^{-1}  \fX_B^{(2)} \fz_{(k)}^{(2)}=o_p(1).
\]
Therefore, $I_{22}=o_p(1)$.

We now analyze $\vep_4$.
For $\vep_{41}$, because $N(\|\fu_{kA}^{(1)}\|-1 )=O_p(1)$, by Proposition \ref{thm:clt_ev_norm} and that $||\fS_{21}^{(2)}|| = O_p(\sqrt{N})$,
we get $\vep_{41}=o_p(1)$. Similarly,  we get $\vep_{42}=o_p(1)$.

To sum up, we have shown that
\begin{equation}\label{eq:2sample_AA_3}
N\fu_{kA}^{(1)}{}^\mT \fXi_{12} (\lambda_k^{(2)}\fI-\fS_{22}^{(2)})^{-1} \fS_{21}^{(2)} \fu_{kA}^{(2)}
=o_p(1).
\end{equation}

Next, we prove that $\vep_3=o_p(1)$. We have
\begin{eqnarray*}
\vep_3 &=&  N(\lambda_k^{(2)}-\wh{\lam}_k^{(2)}) \fu_{kA}^{(1)}{}^\mT \fXi_{12} (\wh{\lam}_k^{(2)}\fI-\fS_{22}^{(2)})^{-1} (\lambda_k^{(2)}\fI-\fS_{22}^{(2)})^{-1} \fS_{21}^{(2)} \fu_{kA}^{(2)} \\
&=& N(\lambda_k^{(2)}-\wh{\lam}_k^{(2)}) \fu_{kA}^{(1)}{}^\mT \fXi_{12} (\lambda_k^{(2)}\fI-\fS_{22}^{(2)})^{-2} \fS_{21}^{(2)} \fu_{kA}^{(2)}  \\
&& + N(\lambda_k^{(2)}-\wh{\lam}_k^{(2)}) \fu_{kA}^{(1)}{}^\mT \fXi_{12}
		[(\wh{\lam}_k^{(2)}\fI-\fS_{22}^{(2)})^{-1} - (\lambda_k^{(2)}\fI-\fS_{22}^{(2)})^{-1}]
		(\lambda_k^{(2)}\fI-\fS_{22}^{(2)})^{-1} \fS_{21}^{(2)} \fu_{kA}^{(2)}  \\
&=& \dfrac{N}{\lambda_k^{(2)}} \(1-\dfrac{\wh{\lam}_k^{(2)}}{\lambda_k^{(2)}} \)
		\fu_{kA}^{(1)}{}^\mT \fXi_{12} (\fI-1/\lambda_k^{(2)} \cdot \fS_{22}^{(2)})^{-2} \fS_{21}^{(2)} \fu_{kA}^{(2)}  \\
&& + \dfrac{N}{\wh{\lam}_k^{(2)}} \(1-\dfrac{\wh{\lam}_k^{(2)}}{\lambda_k^{(2)}} \)^2
		\fu_{kA}^{(1)}{}^\mT \fXi_{12}
		(\fI-1/\wh{\lam}_k^{(2)} \cdot \fS_{22}^{(2)})^{-1}
		(\fI-1/\lambda_k^{(2)} \cdot \fS_{22}^{(2)})^{-2} \fS_{21}^{(2)} \fu_{kA}^{(2)} \\
&=:& \vep_{31} + \vep_{32}.
\end{eqnarray*}
Following the same proof strategy as for \eqref{eq:2sample_AA_3} and applying Theorem 1, we get $\vep_{11}=o_p(1)$.
For $\vep_{32}$, using Assumption (A.i), Theorem 1 and that $||\fS_{21}^{(2)}|| = O_p(\sqrt{N})$,
we get $\vep_{42}=o_p(1)$.

To sum up, we have
\[
N\fu_{kA}^{(1)}{}^\mT \fXi_{12} \fu_{kB}^{(2)} = o_p(1).
\]
Using  the same argument we get
$N\fu_{kB}^{(1)}{}^\mT \fXi_{21} \fu_{kA}^{(2)} = o_p(1).$

Finally, we show that $N\fu_{kB}^{(1)}{}^\mT \fXi_{22} \fu_{kB}^{(2)} = o_p(1).$
By \eqref{eqn:u_kB_exp}, we have
\begin{eqnarray*}
&& N \fu_{kB}^{(1)}{}^\mT \fXi_{22} \fu_{kB}^{(2)} \\
&=& N \fu_{kA}^{(1)}{}^\mT \fS_{12}^{(1)} (\wh{\lam}_k^{(1)}\fI-\fS_{22}^{(1)})^{-1}
		\fXi_{22} (\wh{\lam}_k^{(2)}\fI -\fS_{22}^{(2)} )^{-1} \fS_{21}^{(2)} \fu_{kA}^{(2)} \\
&=& N\|\fu_{kA}^{(1)}\|\cdot \|\fu_{kA}^{(2)}\|  \cdot
		\wt{\fu}_{kA}^{(1)}{}^\mT \fS_{12}^{(1)} (\wh{\lam}_k^{(1)}\fI-\fS_{22}^{(1)})^{-1}
		\fXi_{22} (\wh{\lam}_k^{(2)}\fI -\fS_{22}^{(2)} )^{-1} \fS_{21}^{(2)} \wt{\fu}_{kA}^{(2)} \\
&=& N \wt{\fu}_{kA}^{(1)}{}^\mT \fS_{12}^{(1)} (\wh{\lam}_k^{(1)}\fI-\fS_{22}^{(1)})^{-1}
		\fXi_{22} (\wh{\lam}_k^{(2)}\fI -\fS_{22}^{(2)} )^{-1} \fS_{21}^{(2)} \wt{\fu}_{kA}^{(2)} +o_p(1) ,
\end{eqnarray*}
where the last step follows from Proposition \ref{thm:clt_ev_norm}.
Note that by equation \eqref{star1}, we have
\begin{eqnarray*}
&& N(\wt{\fu}_{kA}^{(1)}-\wt{\fe}_{kA})^\mT \fS_{12}^{(1)} (\wh{\lam}_k^{(1)}\fI-\fS_{22}^{(1)})^{-1}
\fXi_{22} (\wh{\lam}_k^{(2)}\fI -\fS_{22}^{(2)} )^{-1} \fS_{21}^{(2)} \wt{\fu}_{kA}^{(2)} \\
&=& O_p\(\sqrt{N} \cdot \sqrt{\lam_k^{(1)}} \cdot \dfrac{1}{\lam_k^{(1)}} \cdot  \dfrac{1}{\lam_k^{(2)}} \cdot \sqrt{\lam_k^{(2)}} \) =o_p(1).	
\end{eqnarray*}
Similarly,
\[
N \ \wt{\fe}_{kA}^T \fS_{12}^{(1)} (\wh{\lam}_k^{(1)}\fI -\fS_{22}^{(1)} )^{-1} \fXi_{22}
(\wh{\lam}_k^{(2)}\fI -\fS_{22}^{(2)} )^{-1} \fS_{21}^{(2)} (\wt{\fu}_{kA}^{(2)} -\wt{\fe}_{kA}) = o_p(1).
\]
Therefore,
\[
N\fu_{kB}^{(1)}{}^\mT \fXi_{22} \fu_{kB}^{(2)}
=N \ \wt{\fe}_{kA}^\mT \fS_{12}^{(1)} (\wh{\lam}_k^{(1)}\fI-\fS_{22}^{(1)} )^{-1} \fXi_{22} (\wh{\lam}_k^{(2)}\fI-\fS_{22}^{(2)})^{-1} \fS_{21}^{(2)} \wt{\fe}_{kA} + o_p(1).
\]
Note further that by Theorem 1,
\begin{eqnarray*}
&& N \ \wt{\fe}_{kA}^\mT \fS_{12}^{(1)}
\left[
(\wh{\lam}_k^{(1)}\fI -\fS_{22}^{(1)} )^{-1} - (\lambda_k^{(1)}\fI -\fS_{22}^{(1)} )^{-1}
\right]
\fXi_{22}  (\wh{\lam}_k^{(2)}\fI -\fS_{22}^{(2)} )^{-1}
\fS_{21}^{(2)} \wt{\fe}_{kA}  \\
&=& \dfrac{N}{\wh{\lam}_k^{(1)}\wh{\lam}_k^{(2)}} \(1-\dfrac{\wh{\lam}_k^{(1)}}{\lambda_k^{(1)}} \) \\
&& \times \
\wt{\fe}_{kA}^\mT \fS_{12}^{(1)}
(\fI-1/\wh{\lam}_k^{(1)}\cdot\fS_{22}^{(1)} )^{-1}
(\fI-1/\lambda_k^{(1)}\cdot \fS_{22}^{(1)})^{-1} \fXi_{22}
(\fI-1/\wh{\lam}_k^{(2)}\cdot\fS_{22}^{(2)} )^{-1}  \fS_{21}^{(2)} \wt{\fe}_{kA} \\
&=& o_p(1).
\end{eqnarray*}
Similarly,
\[
 N \ \wt{\fe}_{kA}^\mT \fS_{12}^{(1)}
(\wh{\lam}_k^{(1)}\fI -\fS_{22}^{(1)} )^{-1}
\fXi_{22}
\left[
(\wh{\lam}_k^{(2)}\fI -\fS_{22}^{(2)} )^{-1} - (\lambda_k^{(2)}\fI -\fS_{22}^{(2)} )^{-1}
\right]
\fS_{21}^{(2)} \wt{\fe}_{kA}
=o_p(1).
\]
It follows that
\begin{eqnarray*}
 N\fu_{kB}^{(1)}{}^\mT \fXi_{22} \fu_{kB}^{(2)}
= N\wt{\fe}_{kA}^\mT \fS_{12}^{(1)} (\lambda_k^{(1)}\fI-\fS_{22}^{(1)})^{-1} \fXi_{22} (\lambda_k^{(2)}\fI-\fS_{22}^{(2)})^{-1} \fS_{21}^{(2)} \wt{\fe}_{kA} +o_p(1).
\end{eqnarray*}
Note that
\begin{eqnarray*}
&&N\wt{\fe}_{kA}^\mT \fS_{12}^{(1)} (\lambda_k^{(1)}\fI-\fS_{22}^{(1)})^{-1} \fXi_{22} (\lambda_k^{(2)}\fI-\fS_{22}^{(2)})^{-1} \fS_{21}^{(2)} \wt{\fe}_{kA}\\
&=& \dfrac{N}{T_1T_2}
		\wt{\fe}_{kA}^\mT {\fLa_A^{(1)}}^{1/2} \fZ_A^{(1)} {\fX_{B}^{(1)}}^\mT  (\lambda_k^{(1)}\fI-\fS_{22}^{(1)})^{-1} \fXi_{22} (\lambda_k^{(2)}\fI-\fS_{22}^{(2)})^{-1}  \fX_B^{(2)} {\fZ_A^{(2)}}^\mT {\fLa_A^{(2)}}^{1/2} \wt{\fe}_{kA} \\
&=& \dfrac{N}{T_1T_2} \sqrt{\lambda_k^{(1)}\lambda_k^{(2)}} \cdot
		{\fz_{(k)}^{(1)}}^\mT {\fX_B^{(1)}}^\mT
		(\lambda_k^{(1)}\fI-\fS_{22}^{(1)})^{-1} \fXi_{22} (\lambda_k^{(2)}\fI-\fS_{22}^{(2)})^{-1} \fX_B^{(2)} \fz_{(k)}^{(2)}.  \\
\end{eqnarray*}
Using the independence among $\fz_k^{(1)}, \fz_{(k)}^{(2)}, \fX_B^{(1)}$ and $\fX_B^{(2)}$, Assumption A and that $\|\fS_{22}\| =O_p(1)$, one can show that the last term is $o_p(1)$.
It follows that
\[
N\fu_{kB}^{(1)}{}^\mT \fXi_{22} \fu_{kB}^{(2)} =o_p(1),
\]
which completes the proof of Theorem 6. 	
\end{proof}

\begin{proof}[Proof of Corollary 1]
If  $({\fv}_1^{(1)}, \ldots, {\fv}_r^{(1)})=({\fv}_1^{(2)}, \ldots, {\fv}_r^{(2)})$,
%	\[
%	{\fV}^{(1)}={\fV}^{(2)}, ~~~~ \textrm{or} ~~~~
%	({\fv}_1^{(1)}, \ldots, {\fv}_r^{(1)})=({\fv}_1^{(2)}, \ldots, {\fv}_r^{(2)}),
%	\]
then $\fXi_{11,-k}=\fI_{r-1}$.
Denote
$
\fq_k=
\begin{pmatrix}
\fq_{kA} \\
\fq_{kB},
\end{pmatrix},
$
where $\fq_{kA} \sim N(0,\rho_1\fD_{a_k})$, $\fq_{kB} \sim N(0,\rho_2\fD_{b_k})$ and $\fq_{kA}$ and $\fq_{kB}$ are independent.
Therefore, the limiting distribution becomes
\begin{eqnarray*}
		\fq_k^\mT
		\begin{pmatrix}
			\fI_{r-1} & \fI_{r-1}  \\
			\fI_{r-1} & \fI_{r-1}	
		\end{pmatrix}
		\fq_k
		&=& \fq_{kA}^\mT \fq_{kA} + 2\fq_{kA}^\mT \fq_{kB} + \fq_{kB}^\mT \fq_{kB}  \\
		&=& \sum_{j\neq k, j=1}^r \(q_{kA}[j] + q_{kB}[j] \)^2 \\
		&\stackrel{d}{=}& \sum_{j\neq k, j=1}^r \(\rho_1\om_{kj}^{(1)} + \rho_2\om_{kj}^{(2)} \) \cdot Z_j^2,	
\end{eqnarray*}
where $Z_j$ are \hbox{i.i.d.} standard normal random variables.
	
\end{proof}

%%%%%%%%%%%%%%%%%%%%%%%%%%%%%%%%%%%%%%%%
\end{document}